\numberwithin{equation}{section}
\newtheorem{theorem}{Theorem}[section]
\newtheorem{proposition}[theorem]{Proposition}
\newtheorem{lemma}[theorem]{Lemma}
\newtheorem{remark}[theorem]{Remark}
\theoremstyle{definition}
\newcommand{\R}{{\mathbb R}}
\newcommand{\T}{{\mathbb T}}
\newcommand{\dvg}{{\rm div}}
\newcommand{\eps}{\varepsilon}
\def\de{\partial}
\title[Explosive solutions for a class of 
quasi-linear PDEs]{On explosive solutions for a class of \\ quasi-linear elliptic equations}
\author[F.~Gladiali]{Francesca Gladiali}
\author[M.~Squassina]{Marco Squassina}
\address{Universit\`a degli Studi di Sassari
\newline\indent
Via Piandanna 4, I-07100 Sassari, Italy}
\email{fgladiali@uniss.it}
\address{Universit\`a degli Studi di Verona
\newline\indent
Strada Le Grazie 15, I-37134 Verona, Italy}
\email{marco.squassina@univr.it}
\thanks{The authors were partially supported by the PRIN projects {\em ``Variational Methods and Nonlinear Differential Equations''} and
 {\em ``Variational and Topological Methods in the Study of Nonlinear Phenomena''} respectively}
\subjclass[2000]{35D99, 35J62, 58E05, 74G30}
\keywords{Quasi-linear elliptic equations; large solutions; existence and qualitative behavior}
\begin{document}

\begin{abstract}
We study existence, uniqueness, multiplicity and symmetry of large solutions for a class of quasi-linear elliptic equations.
Furthermore, we characterize the boundary blow-up rate of solutions, including the case where the contribution of boundary curvature appears.
\end{abstract}

\maketitle

\bigskip
\setcounter{tocdepth}{1}
\begin{center}
\begin{minipage}{11cm}
\footnotesize
\tableofcontents
\end{minipage}
\end{center}



\section{Introduction and results}

The study of explosive solutions of elliptic equations goes back 
to 1916 by Bieberbach \cite{bieber} for the problem $\Delta u = e^{u}$ on a bounded two dimensional domain,
arising in Riemannian geometry as related to exponential metrics with 
constant Gaussian negative curvature. The result was then extended to three dimensional domains
by Rademacher \cite{rademacher} in 1943. Large solutions of more general elliptic 
equations $\Delta u = f (u)$ in smooth bounded domains $\Omega$ of $\R^N$ were originally studied
by Keller \cite{keller} and Osserman \cite{osserman} around 1957, and subsequently 
refined in a series of more recent contributions, see \cite{AR,BM,BM2,cirad1,cirad2,CD,ddgr,farinahb,gherad-book,LM,marcvero,PV}
and references therein. 
The aim of this paper is to study existence, uniqueness, symmetry as well as 
asymptotic behavior on $\partial\Omega$ for the quasi-linear problem
\begin{equation}
\label{prob}
\begin{cases}
\,\dvg(a(u)Du)=\frac{a'(u)}{2}|Du|^2+f(u)   & \text{in $\Omega$,} \\
\noalign{\vskip2pt}
\,\text{$u(x)\to+\infty$\quad as ${\rm d}(x,\partial\Omega)\to 0$,}    &
\end{cases}
\end{equation}
where $\Omega$ is a bounded smooth domain of $\R^N$, $N\geq 1$, and ${\rm d}(x,\partial\Omega)$
is the distance of $x$ from the boundary of $\Omega$. 
Here and in the following $a:\R\to\R^+$ is a $C^1$ function 
such that there exists $\nu>0$ with $a(s)\geq \nu$ for any $s\in \R$, and $f:\R\to\R$ is a $C^1$ function.
In problem~\eqref{prob}, the terms depending upon $a$ are formally associated with the functional $\int_\Omega a(u)|Du|^2$
and the problem can be thought as related to the study of blow-up solutions in presence of 
a Riemannian metric tensor depending upon the unknown $u$ itself,
see e.g.\ \cite{scho-uhl,uhlen} for more details.
We shall cover the situations where $a$ and $f$ have an exponential, polynomial or logarithmic type growth at infinity. In the semi-linear
case $a\equiv 1$, typical situations where the exponential nonlinearity appears is the Liouville \cite{liouv} equation
$\Delta u=4 e^{2u}$ in $\Omega\subset\R^2$, while for a typical polynomial growth one can think 
to the Loewner-Nirenberg \cite{loenir} equation $\Delta u=3u^{5}$ in $\Omega\subset\R^3$.
Logarithmic type nonlinearities usually appear in theories of quantum gravity \cite{log1} and in particular in the
framework of nonlinear Schr\"odinger equations \cite{log2}.
The function $a$ can be regarded as responsible for the diffusion effects while, on the contrary, $f$ can be
considered as playing the r\v ole of an external source. Roughly speaking, in some sense, 
$a$ is competing with $f$ for the existence and nonexistence of solutions
to~\eqref{prob} and the asymptotic behavior of $a(s)$ and $f(s)$ as $s\to+\infty$ determines the blow-up rate
of $u(x)$ as $x$ approaches the boundary of $\Omega$. For the literature on these type of quasi-linear operators
in frameworks different from that of large solutions, we refer the reader to \cite{squassmono} and the reference therein.
In order to give precise characterization of existence
and explosion rate, we shall convert the quasi-linear problem \eqref{prob} into a corresponding semi-linear
problem through a change of variable procedure involving the globally defined Cauchy problem
\begin{equation}
	\label{cauchy}
g'=\frac{1}{\sqrt{a\circ g}},\qquad g(0)=0.
\end{equation}
The precise knowledge of the asymptotic behavior of the solution $g$ of~\eqref{cauchy} as $s\to+\infty$ depending of the asymptotics 
of the function $a$ will be crucial in studying the qualitative properties of the solutions to~\eqref{prob}. 
We shall obtain for \eqref{prob} existence, nonexistence, uniqueness and multiplicity 
results in arbitrary smooth bounded domains, uniqueness 
and symmetry results when the problem is set in the ball and, finally, results about the blow-up 
rate of the solution with or without the second order contribution of the local curvature of the boundary $\partial\Omega$.
For instance, if $a(s)\sim a_\infty s^k$ as $s\to+\infty$ and $f(s)\sim f_\infty s^p$ as $s\to+\infty$ with $p>2k+3$, then a solution
to~\eqref{prob} always exists and any solution satisfies, as $x$ approaches $\partial\Omega$,
$$
u(x)=\frac{\Gamma}{({\rm d}(x,\partial\Omega))^{\frac{2}{p-k-1}}}(1+o(1)),\quad\,\,\,
\Gamma=\Big[\frac{p-k-1}{\sqrt{2(p+1)}} \frac{\sqrt{f_{\infty}}}{\sqrt {a_{\infty}}}\Big]^{\frac{2}{k+1-p}}.
$$
If instead $k+1<p\leq 2k+3$, then we have
$$
u(x)=\Gamma\frac{1}{({\rm d}(x,\partial\Omega))^{\frac{2}{p-k-1}}}(1+o(1))
+\Gamma'\frac{{\mathcal H}(\sigma(x)) }{({\rm d}(x,\partial\Omega))^{\frac{3+k-p}{p-k-1}}}(1+o(1)),
\quad\,\,\, \Gamma '=\frac{2(N-1)}{p+k+3}\Gamma,
$$
for $x$ approaching $\partial\Omega$,
being $\sigma(x)$ the orthogonal projection on $\partial\Omega$ of a $x\in\Omega$ and 
denoting ${\mathcal H}$ the mean curvature of the boundary $\partial\Omega$.
\vskip2pt
\noindent
In this paper we shall restrict the attention on the study of explosive solutions in smooth and bounded domains. Concerning
the study of large solutions of quasi-linear equations (including non-degenerate and non-autonomous problems)
on the entire space, a vast recent literature currently exists on the subject. We refer
the reader to the contributions \cite{dupai,dupghegouwar,farser1,farser2,farinahb,roberta1,roberta2,filpucrig,puccser}
of (in alphabetical order) Dupaigne, Farina, Filippucci, Pucci, Rigoli and Serrin and the references therein.
\vskip4pt
\noindent
Concerning the existence of solutions to \eqref{prob}, we have the following

\begin{theorem}[\textcolor{blue}{Existence of solutions}]
\label{main-ex}
The following facts hold:
\begin{enumerate}
\item Assume that there exist $k>0$, $\beta>0$, $a_\infty>0$ and $f_\infty>0$ such that
\begin{equation}
		\label{caso:potexp}
\lim_{s\to +\infty}\frac{a(s)} {s^k}=a_\infty,\qquad
\lim_{s\to +\infty}\frac{f(s)} {e^{2\beta s}}=f_\infty.
\end{equation}
Then~\eqref{prob} admits a solution.
\vskip3pt
\item Assume that there exist $k>0$, $p>0$, $a_\infty>0$ and $f_\infty>0$ such that
\begin{equation}
	\label{caso:potpot}
\lim_{s\to +\infty}\frac{a(s)} {s^k}=a_\infty,\qquad
\lim_{s\to +\infty}\frac{f(s)} {s^p}=f_\infty.
\end{equation}
Then~\eqref{prob} admits a solution if and only if $p>k+1$.
\vskip3pt
\item Assume that there exist $k>0$, $\beta>0$, $a_\infty>0$ and $f_\infty>0$ such that
\begin{equation}
		\label{caso:potlog}
\lim_{s\to +\infty}\frac{a(s)} {s^k}=a_\infty,\qquad
\lim_{s\to +\infty}\frac{f(s)} {(\log s)^\beta}=f_\infty.
\end{equation}
Then~\eqref{prob} admits no solution.
\vskip3pt
\item Assume that there exist $\gamma>0$, $\beta>0$, $a_\infty>0$ and $f_\infty>0$ such that
\begin{equation}
		\label{caso:expexp}
\lim_{s\to +\infty}\frac{a(s)} {e^{2\gamma s}}=a_\infty,\qquad
\lim_{s\to +\infty}\frac{f(s)} {e^{2\beta s}}=f_\infty.
\end{equation}
Then~\eqref{prob} admits a solution if and only if $\beta>\gamma$.
\vskip3pt
\item Assume that there exist $\gamma>0$, $p>0$, $a_\infty>0$ and $f_\infty>0$ such that
\begin{equation*}
\lim_{s\to +\infty}\frac{a(s)} {e^{2\gamma s}}=a_\infty,\quad
\lim_{s\to +\infty}\frac{f(s)} {s^p}=f_\infty.
\end{equation*}
Then~\eqref{prob} admits no solution.
\vskip3pt
\item Assume that there exist $\gamma>0$, $\beta>0$, $a_\infty>0$ and $f_\infty>0$ such that
\begin{equation*}
\lim_{s\to +\infty}\frac{a(s)} {e^{2\gamma s}}=a_\infty,\qquad
\lim_{s\to +\infty}\frac{f(s)} {(\log s)^\beta}=f_\infty.
\end{equation*}
Then~\eqref{prob} admits no solution.
\vskip3pt
\item Assume that there exist $\gamma>0$, $\beta>0$, $a_\infty>0$ and $f_\infty>0$ such that
\begin{equation}
		\label{caso:logexp}
\lim_{s\to +\infty}\frac{a(s)} {(\log s)^{2\gamma}}=a_\infty,\qquad	
\lim_{s\to +\infty}\frac{f(s)} {e^{2\beta s}}=f_\infty.
\end{equation}
Then~\eqref{prob} admits a solution.
\vskip3pt
\item Assume that there exist $\gamma>0$, $p>0$, $a_\infty>0$ and $f_\infty>0$ such that
\begin{equation}
		\label{caso:logpot}
\lim_{s\to +\infty}\frac{a(s)} {(\log s)^{2\gamma}}=a_\infty,\qquad
\lim_{s\to +\infty}\frac{f(s)} {s^p}=f_\infty.
\end{equation}
Then~\eqref{prob} admits a solution if and only if $p>1$.
\vskip3pt
\item Assume that there exist $\gamma>0$, $\beta>0$, $a_\infty>0$ and $f_\infty>0$ such that
\begin{equation*}
\lim_{s\to +\infty}\frac{a(s)} {(\log s)^{2\gamma}}=a_\infty, \qquad	
\lim_{s\to +\infty}\frac{f(s)} {(\log s)^\beta}=f_\infty.
\end{equation*}
Then~\eqref{prob} admits no solution.
\end{enumerate}
\end{theorem}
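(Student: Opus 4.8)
The plan is to follow the route sketched in the Introduction: first reduce \eqref{prob}, via the change of unknown associated with \eqref{cauchy}, to a semilinear boundary blow-up problem, then invoke the classical Keller--Osserman dichotomy, and finally rewrite the resulting criterion directly in terms of $a$ and $f$, so that each of the nine regimes is settled by inspecting the convergence at $+\infty$ of a single explicit integral. \emph{Step 1 (reduction).} Let $g$ solve \eqref{cauchy} and set $\phi(t):=\int_0^t\sqrt{a(s)}\,ds$, so that $\phi'=\sqrt a$, $\phi''=a'/(2\sqrt a)$ and $\phi=g^{-1}$. Since $a\ge\nu>0$, one has $g'=1/\sqrt{a\circ g}\in(0,1/\sqrt\nu]$, and $g$ cannot tend to a finite limit at $\pm\infty$ (there $g'$ would stay bounded away from $0$), so $g$ is a strictly increasing $C^2$ bijection of $\R$ onto $\R$. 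Using $\Delta\phi(u)=\sqrt{a(u)}\,\Delta u+\frac{a'(u)}{2\sqrt{a(u)}}|Du|^2$ one obtains the pointwise identity
\begin{equation}\label{eq:identity}
\dvg\big(a(u)Du\big)-\tfrac{a'(u)}{2}|Du|^2=a(u)\Delta u+\tfrac{a'(u)}{2}|Du|^2=\sqrt{a(u)}\,\Delta\phi(u) ,
\end{equation}
hence, writing $u=g(v)$, problem \eqref{prob} is equivalent to requiring that $v$ solve $\Delta v=F(v)$ in $\Omega$ and blow up on $\partial\Omega$, where $F(v):=f(g(v))/\sqrt{a(g(v))}$; the boundary condition transfers because $g$, being a bijection of $\R$, sends $+\infty$ to $+\infty$. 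As $a,f\in C^1$, $a\ge\nu>0$ and $g\in C^2$, we have $F\in C^1(\R)$, and interior elliptic regularity makes this an equivalence between classical solutions in $\Omega$.

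\emph{Step 2 (the criterion).} In each of the nine cases $f(s)\to+\infty$ as $s\to+\infty$ (because $f_\infty>0$), so $F>0$ near $+\infty$, and by the classical Keller--Osserman theory (\cite{keller,osserman}; see also \cite{marcvero}) the semilinear problem of Step 1 admits a solution in the bounded smooth domain $\Omega$ if and only if $\int^{\infty}\bigl(\int_0^{v}F(\tau)\,d\tau\bigr)^{-1/2}\,dv<\infty$ --- the existence half by a sub/supersolution construction on balls, the nonexistence half by the Keller--Osserman a priori estimate, both still valid for a merely continuous, eventually positive $F$ once it is trapped between nondecreasing functions of the same asymptotic growth. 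Substituting $y=g(\tau)$ (so $d\tau=\sqrt{a(y)}\,dy$) gives $\int_0^vF(\tau)\,d\tau=\int_0^{g(v)}f(y)\,dy$, and a further substitution $y=g(v)$ in the outer integral converts the criterion into
\begin{equation}\label{eq:KO}
\int^{\infty}\sqrt{\frac{a(y)}{\int_0^{y}f(t)\,dt}}\;dy<\infty ,
\end{equation}
so that \eqref{prob} admits a solution if and only if \eqref{eq:KO} holds.

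\emph{Step 3 (the nine regimes).} It then remains to evaluate \eqref{eq:KO}. Using $\int_0^yf(t)\,dt\sim\frac{f_\infty}{2\beta}e^{2\beta y}$, resp.\ $\sim\frac{f_\infty}{p+1}y^{p+1}$, resp.\ $\sim f_\infty y(\log y)^{\beta}$ according as $f$ has exponential, polynomial or logarithmic growth, together with the elementary facts that $\int^\infty y^{\alpha}h(y)\,dy$ converges iff $\alpha<-1$ (for $h$ of polynomial--logarithmic growth), that $\int^\infty e^{-\delta y}h(y)\,dy<\infty$ while $\int^\infty e^{\delta y}h(y)\,dy=+\infty$ for $\delta>0$, and that $\int^\infty y^{-1/2}(\log y)^{q}\,dy=+\infty$ for every $q\in\R$, the integrand in \eqref{eq:KO} turns out to be asymptotic to $y^{k/2}e^{-\beta y}$ in case (1), $y^{(k-p-1)/2}$ in case (2), $y^{(k-1)/2}(\log y)^{-\beta/2}$ in case (3), $e^{(\gamma-\beta)y}$ in case (4), $e^{\gamma y}y^{-(p+1)/2}$ in case (5), $e^{\gamma y}y^{-1/2}(\log y)^{-\beta/2}$ in case (6), $(\log y)^{\gamma}e^{-\beta y}$ in case (7), $(\log y)^{\gamma}y^{-(p+1)/2}$ in case (8), and $y^{-1/2}(\log y)^{\gamma-\beta/2}$ in case (9). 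Consequently \eqref{eq:KO} holds unconditionally in cases (1) and (7), holds iff $p>k+1$ in case (2), iff $\beta>\gamma$ in case (4), iff $p>1$ in case (8), and fails for every admissible value of the parameters in cases (3), (5), (6) and (9) --- which is exactly the content of the nine items. In particular, under the hypotheses of the last item (case (9)), problem \eqref{prob} has no solution.

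\emph{Main obstacle.} Step 3 is a routine computation and Step 1 a direct calculation; the real work sits in Step 2. The delicate point will be to make the Keller--Osserman dichotomy genuinely applicable to a $C^1$ nonlinearity $F$ that need be neither monotone nor of constant sign below a fixed level --- which I would handle by squeezing $F$ between nondecreasing minorants and majorants of the same asymptotic growth (a minorant satisfying \eqref{eq:KO} yielding a large subsolution, and hence, with a standard supersolution, a large solution; a majorant violating \eqref{eq:KO} forcing an a priori bound, and hence nonexistence) --- together with the routine check that, through the change of unknown of Step 1, (weak or classical) solutions correspond to one another with the correct interior regularity.
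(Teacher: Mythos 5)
Your Step 1 is exactly the paper's Lemma~\ref{colleg}, and your Step 3 is a correct and equivalent way of carrying out what the paper does in Propositions~\ref{potexp}--\ref{loglog}: after the substitution $y=g(s)$ your integral criterion $\int^{\infty}\sqrt{a(y)/\int^{y}f}\,dy<\infty$ is precisely condition {\bf E} rewritten in the original variables (so you bypass the asymptotics of $g$ in Lemmas~\ref{l2.3}, \ref{growl}, \ref{growlog}, which is a mild streamlining), and all nine asymptotic evaluations agree with the paper's. The genuine gap is Step 2, which you yourself flag as the main obstacle. At that point the paper does not invoke ``classical'' Keller--Osserman theory: Proposition~\ref{exprop-gen} rests on \cite[Theorem 1.3]{ddgr}, whose whole content is that condition {\bf E} is equivalent to existence of boundary blow-up solutions in arbitrary bounded domains \emph{without any monotonicity} of the nonlinearity $h(s)=f(g(s))/\sqrt{a(g(s))}$; this is a substantive theorem, not a routine extension obtained by trapping $h$ between monotone envelopes.

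Concretely, your squeeze does not deliver it as described. In the existence half the roles are reversed: a nondecreasing minorant $\phi\le h$ satisfying Keller--Osserman yields the Osserman interior upper bound, and its large solution is a blow-up \emph{super}solution of $\Delta v=h(v)$ (since $\Delta w=\phi(w)\le h(w)$); a blow-up \emph{sub}solution must instead come from a nondecreasing \emph{majorant} satisfying Keller--Osserman, and one must still order the two barriers and pass to the limit on an exhaustion. More seriously, the nonexistence half is backwards: ``a majorant violating the condition forcing an a priori bound'' is not a mechanism, because a priori bounds come from minorants that \emph{satisfy} Keller--Osserman. The natural repair --- replace $h$ by a nondecreasing minorant $\phi\le h$ and invoke classical necessity for the monotone problem --- fails, since a large solution of $\Delta v=h(v)$ is only a blow-up \emph{sub}solution of $\Delta w=\phi(w)$, and blow-up subsolutions exist even when Keller--Osserman fails: $v(x)=(1-|x|^{2})^{-1}$ satisfies $\Delta v\ge 2N>0$ in $B_1$ and explodes on $\partial B_1$, while a positive constant nonlinearity violates the condition. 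Passing instead to a nondecreasing majorant $G\ge h$ makes $v$ a blow-up \emph{super}solution of $\Delta w=G(w)$, and proving that blow-up supersolutions force Keller--Osserman in dimension $N\ge 2$ is essentially the nontrivial necessity statement itself (in dimension one it follows from the energy inequality, but the $N$-dimensional case is exactly where \cite{ddgr} work, via the sharpened condition). So either quote \cite[Theorem 1.3]{ddgr} as the paper does --- after which your Steps 1 and 3 complete the proof of all nine items --- or supply a genuine proof of the non-monotone Keller--Osserman equivalence; the squeezing sketch, as it stands, does not.
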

\noindent
Theorem~\ref{main-ex} follows by Propositions~\ref{potexp}, \ref{potpot}, \ref{pollog}, 
\ref{expexp}, \ref{exppot}, \ref{explog}, \ref{logexp}, \ref{logpot} and \ref{loglog}.
\vskip4pt

\noindent
Concerning the uniqueness  of solutions, we have the following

\begin{theorem}[\textcolor{blue}{Uniqueness of solutions}]
	\label{uniq}
Suppose 
that
\begin{equation}\label{u1}
2 f'(s)a(s)-f(s)a'(s)\geq 0\quad \text{ for $s\geq 0$}, \quad f(s)=0\quad\text{for $s\leq 0$}, \quad f(s)>0\quad\text{ for $s>0$},
\end{equation}
and that 
\begin{equation}\label{u2}
\lim_{s\to +\infty}\frac{\big[2 f'(s)a(s)-f(s)a'(s) \big]g^{-1}(s)}{2 (a(s))^{\frac 32}f(s)}>1,
\end{equation}
where $g$ is as defined in~\eqref{cauchy}.
Then~\eqref{prob} admits a unique solution, which is positive. \\
Moreover, assume that $a$ and $f$ satisfy one of the existence conditions of Theorem~\ref{main-ex} and $\partial \Omega$ is of class $C^3$ and its mean curvature is nonnegative.
Then if \eqref{u1} is satisfied and if 
\begin{equation}\label{u1b}
\text{there exists $R>0$ such that:$\qquad  \Big (\frac {f(g(s))}{\sqrt{a(g(s))}}\Big)^{\frac 12}$ is convex in $(R,+\infty)$,}
\end{equation} 
then~\eqref{prob} admits a unique solution, which is positive. \\
\noindent 
Consider now also problem~\eqref{prob} set in the unit ball $B_1(0)$
\begin{equation}
\label{prob-ball}
\begin{cases}
\,\dvg(a(u)Du)=\frac{a'(u)}{2}|Du|^2+f(u)   & \text{in $B_1(0)$,} \\
\noalign{\vskip2pt}
\,\text{$u(x)\to+\infty$\quad as ${\rm d}(x,\partial B_1(0))\to 0$.}    &
\end{cases}
\end{equation}
Let $\lambda_1$ be the first eigenvalue of $-\Delta$ in $B_1(0)$ with Dirichlet boundary conditions.
Assume $a$ and $f$ satisfy one of the existence conditions of Theorem~\ref{main-ex} and, in addition, that
\begin{equation}
	\label{uniqcond}
2 f'(s)a(s)-f(s)a'(s)+2\lambda_1 a^2(s)\geq 0,\qquad \text{for all $s\in\R$.}
\end{equation}
Then~\eqref{prob-ball} admits a unique solution. 
\end{theorem}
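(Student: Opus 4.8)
The plan is to reduce all three assertions, via the change of unknown $u=g(v)$ with $g$ the solution of~\eqref{cauchy}, to uniqueness statements for a \emph{semilinear} boundary blow-up problem, and to argue in that setting. Since $a\ge\nu>0$, the map $g$ is a $C^{2}$ increasing diffeomorphism of $\R$ onto $\R$ (it cannot be bounded above, as otherwise $g'=(a\circ g)^{-1/2}$ would stay bounded away from $0$), so using the identity $a(g)g'=\sqrt{a\circ g}$ one checks by a direct computation that $u$ solves~\eqref{prob} if and only if $v=g^{-1}(u)$ solves
\begin{equation*}
\Delta v=h(v)\ \text{ in }\Omega,\qquad v(x)\to+\infty\ \text{ as }{\rm d}(x,\partial\Omega)\to0,\qquad h(v):=\frac{f(g(v))}{\sqrt{a(g(v))}},
\end{equation*}
the blow-up condition being preserved because $g(v)\to+\infty$ exactly when $v\to+\infty$. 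Hence $u\mapsto g^{-1}(u)$ is a bijection of solution sets, and uniqueness for~\eqref{prob} (resp.\ for~\eqref{prob-ball}) is equivalent to uniqueness of the semilinear large solution in $\Omega$ (resp.\ in $B_{1}(0)$). The same computation gives
\begin{equation*}
h'(v)=\frac{2f'(g(v))\,a(g(v))-f(g(v))\,a'(g(v))}{2\,a(g(v))^{2}},
\end{equation*}
so that $v\,h'(v)/h(v)$ equals the quotient in~\eqref{u2} evaluated at $s=g(v)$. Thus~\eqref{u1} reads ``$h$ is nondecreasing, $h\equiv0$ on $(-\infty,0]$ and $h>0$ on $(0,\infty)$'', \eqref{u2} reads $\liminf_{v\to+\infty}v\,h'(v)/h(v)>1$, \eqref{u1b} reads ``$\sqrt{h}$ is convex near $+\infty$'', and~\eqref{uniqcond} reads $h'(v)\ge-\lambda_{1}$ for all $v$, i.e.\ $v\mapsto h(v)+\lambda_{1}v$ is nondecreasing. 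Positivity in the first two assertions will follow once uniqueness is known: $h\ge0$ makes a large solution $v$ subharmonic; on $\{v<0\}$ one has $h(v)=0$, so $v$ is harmonic there and the maximum principle forces $\{v<0\}=\emptyset$; and writing $\Delta v=c(x)v$ with $c(x)=h(v(x))/v(x)\ge0$ continuous (recall $h\in C^{1}$, $h(0)=0$), the strong maximum principle promotes $v\ge0$ to $v>0$, since a vanishing interior point would contradict the blow-up on $\partial\Omega$.

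\medskip
For the first assertion, \eqref{u2} gives $v\,h'(v)\ge(1+\delta)h(v)$ for $v$ large, so $v\mapsto h(v)/v$ is increasing for $v$ large and, integrating, $h(v)\ge c\,v^{1+\delta}$; hence $h(v)/v\to+\infty$ and the Keller--Osserman integral $\int^{\infty}(2H)^{-1/2}\,dv$ converges, $H$ being a primitive of $h$. Existence of a large solution then follows from the classical monotone approximation — solve $\Delta v_{n}=h(v_{n})$ in $\Omega$ with $v_{n}=n$ on $\partial\Omega$, use ``$h$ nondecreasing'' for monotonicity and Keller--Osserman barriers for locally uniform interior bounds, and pass to the limit (see~\cite{keller,osserman,gherad-book}). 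For uniqueness I would run the standard two-step sweeping argument. First, comparing any large solution in thin curved boundary strips with one-dimensional solutions of $\psi''=h(\psi)$ that blow up at the inner edge, one gets $v(x)/\psi({\rm d}(x,\partial\Omega))\to1$, hence $v_{1}/v_{2}\to1$ uniformly as ${\rm d}(x,\partial\Omega)\to0$ for any two large solutions $v_{1},v_{2}$. Second, since $h(v)/v$ is nondecreasing for $v$ large, $(1+\eps)v_{2}$ is a supersolution of $\Delta w=h(w)$ in a thin boundary strip and there dominates $v_{1}$ on the inner boundary and on $\partial\Omega$ (ratio $\to1\le1+\eps$); by comparison $v_{1}\le(1+\eps)v_{2}$ in the strip, and then in all of $\Omega$, and letting $\eps\to0$ and exchanging $v_{1}$ and $v_{2}$ gives $v_{1}=v_{2}$. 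The delicate point is the first step: it works precisely because $\psi({\rm d})\to+\infty$ by Keller--Osserman, so the $O(1)$ discrepancies in the slab comparison become negligible after dividing, while~\eqref{u2} is exactly what makes $h(v)/v$ monotone, as the second step requires.

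\medskip
For the second assertion, \eqref{u2} is dropped but one assumes an existence condition of Theorem~\ref{main-ex}, which fixes the growth class of $h$ (polynomial or exponential in $v$, in particular $h$ superlinear with $h(v)/v\to+\infty$ and Keller--Osserman satisfied), together with~\eqref{u1b} (convexity of $\sqrt{h}$ near $+\infty$), $\partial\Omega\in C^{3}$ and ${\mathcal H}\ge0$. My plan is to invoke the semilinear uniqueness theory for boundary blow-up problems of Marcus--Véron type (see~\cite{marcvero,gherad-book}), together with the precise boundary blow-up expansion for this class of nonlinearities previewed in the Introduction: if $h$ is nondecreasing, $h(v)/v\to+\infty$, Keller--Osserman holds and $\sqrt{h}$ is convex near $+\infty$, then all large solutions in a smooth bounded domain share the same two-term expansion near $\partial\Omega$ and the large solution is unique. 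The mechanism is again a sweeping argument, but lacking the cheap monotonicity of $h(v)/v$ one uses the convexity of $\sqrt{h}$ from~\eqref{u1b} to build sub/supersolutions of the form $\psi({\rm d}(x))\pm\eta\,\psi_{1}({\rm d}(x))$ carrying the correct \emph{second order} (curvature) term; this is exactly where ${\mathcal H}\ge0$ and the $C^{3}$ regularity of $\partial\Omega$ are used, to fix the sign of that correction and squeeze the minimal and the maximal large solution together, so that $v_{1}-v_{2}$ becomes negligible near $\partial\Omega$. (Whenever~\eqref{u2} also holds this assertion is already contained in the first one; the point here is to trade the rigid pointwise condition~\eqref{u2} for the softer~\eqref{u1b}, at the price of the geometric hypotheses.) Positivity follows exactly as above.

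\medskip
For the last assertion, the semilinear problem in $B_{1}(0)$ is $\Delta v=h(v)$, $v\to+\infty$ on $\partial B_{1}(0)$, and~\eqref{uniqcond} says $v\mapsto h(v)+\lambda_{1}v$ is nondecreasing. Let $v_{1},v_{2}$ be two large solutions and $w:=v_{1}-v_{2}$, so $\Delta w=h(v_{1})-h(v_{2})=c(x)\,w$ with $c(x)\ge-\lambda_{1}$ by~\eqref{uniqcond} and the mean value theorem. Testing with $w$ over $B_{1-\delta}(0)$ gives
\begin{equation*}
\int_{B_{1-\delta}}|Dw|^{2}+\int_{B_{1-\delta}}c(x)\,w^{2}=\int_{\partial B_{1-\delta}}w\,\partial_{\nu}w,
\end{equation*}
and since $c(x)w^{2}\ge-\lambda_{1}w^{2}$ and $\int_{B_{1}}|D\varphi|^{2}\ge\lambda_{1}\int_{B_{1}}\varphi^{2}$ for $\varphi\in H^{1}_{0}(B_{1}(0))$, letting $\delta\to0$ would yield
\begin{equation*}
0\le\int_{B_{1}}|Dw|^{2}-\lambda_{1}\int_{B_{1}}w^{2}\le\limsup_{\delta\to0}\int_{\partial B_{1-\delta}}w\,\partial_{\nu}w,
\end{equation*}
whence $w\equiv0$ — provided the boundary integrals vanish in the limit and $w\in H^{1}_{0}(B_{1}(0))$. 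Securing these two facts is the heart of the matter, and it is exactly where the hypothesis that $a,f$ satisfy an existence condition of Theorem~\ref{main-ex} is used: the growth class being fixed, all large solutions share the same leading (and, where relevant, second order) boundary expansion, so that $w$ and $Dw$ decay at $\partial B_{1}(0)$ fast enough that $\sup_{\partial B_{1-\delta}}|w\,\partial_{\nu}w|\to0$ and $\int_{B_{1}}|Dw|^{2}<\infty$, i.e.\ $w\in H^{1}_{0}(B_{1}(0))$, and then the Poincaré inequality closes the estimate. I expect the only genuinely technical point to be quantifying this boundary decay from the common asymptotic expansion; as an alternative one may first observe that the maximal large solution in a ball is radial (rotations carry large solutions to large solutions, so the maximal one is rotation invariant), reduce to the radial ODE, and run the same estimate against the first Dirichlet eigenfunction on the interval.
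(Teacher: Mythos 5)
Your reduction to the semilinear problem is exactly the paper's route: the change of unknown $u=g(v)$, the identity for $h'$, and the translation of \eqref{u1}, \eqref{u2}, \eqref{u1b}, \eqref{uniqcond} into ``$h$ nondecreasing and vanishing on $(-\infty,0]$'', ``$\liminf_{t\to+\infty} t h'(t)/h(t)>1$'', ``$\sqrt h$ asymptotically convex'' and ``$h(s)+\lambda_1 s$ nondecreasing'' coincide with Lemma~\ref{colleg} and formula~\eqref{hprimo} in the paper, and your positivity argument is the paper's Remark on nonnegative solutions. For the first assertion your two-step sweeping argument is, in substance, the proof of the semilinear uniqueness result the paper quotes (\cite{GM}, in the spirit of \cite{BM}): \eqref{u2} gives $h(t)/t^{1+\rho}$ nondecreasing for large $t$, which is what both the boundary ratio asymptotics $v/\psi({\rm d})\to 1$ and the $(1+\eps)$-comparison need, so that part is acceptable as an outline (with the first step being the nontrivial one, taken from the classical literature rather than proved).

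The genuine gaps are in the second and third assertions. For the second, the statement you rely on -- ``$h$ nondecreasing, Keller--Osserman, $\sqrt h$ convex near $+\infty$, $\partial\Omega\in C^3$ with ${\mathcal H}\geq 0$ imply a common two-term expansion and uniqueness'' -- is not a Marcus--V\'eron-type theorem (those require asymptotic homogeneity/regular-variation hypotheses, not mere convexity of $\sqrt h$); it is precisely \cite[Theorem 1.3]{CDG}, which is what the paper invokes, and your sketched sub/supersolution mechanism with curvature corrections is a plan rather than a proof, so as written this assertion is unsupported. For the third, your energy/Poincar\'e argument stands or falls with the claims that $w=v_1-v_2\in H^1_0(B_1(0))$ and that $\int_{\partial B_{1-\delta}} w\,\partial_\nu w\to 0$; under only the existence conditions of Theorem~\ref{main-ex} no such decay of $w$, let alone of $Dw$, is available: the paper's own first-order asymptotics (Theorem~\ref{blowI}) holds only under extra restrictions such as $p>2k+3$ or $\beta>2\gamma$ and gives no gradient information, while for $k+1<p\leq 2k+3$ the curvature-order terms of two solutions need not cancel to the order your boundary integral requires. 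This missing decay is exactly the hard point, and the paper bypasses it by quoting \cite[Corollary 1.4]{CD}, whose proof is not a naive eigenvalue estimate; note also that even granting the boundary terms vanish, the equality case in Poincar\'e leaves the possibility $w=t\varphi_1$, which you do not exclude, and your fallback (radial symmetry of the maximal solution) gives symmetry of one solution, not uniqueness. So the first assertion is essentially fine as a sketch, but assertions two and three need either the precise citations the paper uses or genuinely new arguments replacing them.
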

\noindent
The proof of Theorem~\ref{uniq} follows by Proposition~\ref{uniglob}.
\vskip4pt
\noindent
Concerning the multiplicity of solutions, we have the following

\begin{theorem}[\textcolor{blue}{Nonuniqueness of solutions}]
	\label{molt}
Assume that the functions $a$ and $f$ satisfy one of the existence conditions of Theorem~\ref{main-ex}.
Let $\Omega$ be bounded, convex, $C^2$, $f(0)=0$ and 
\begin{equation}
	\label{segnomonot}
	\text{there exists $R\geq 0$ such that:}\qquad
f|_{(R,+\infty)}>0,\qquad
(2f'a-fa')|_{(R,+\infty)}\geq 0.
\end{equation}
Assume that there exists $1<q<\frac{N+2}{N-2}$ if $N\geq 3$, $q>1$ if $N=1,2$ such that  
\begin{equation*}
0<\lim_{s\to-\infty}\frac{f(s)}{\sqrt{a(s)}|g^{-1}(s)|^q}<+\infty.
\end{equation*}
Then \eqref{prob} admits two solutions, one positive and one sign-changing. In particular, if there exist $k>0$ and $p_-,p_+>1$ such that
\begin{align}
\label{specialnonu1}
&  0<\lim_{s\to +\infty}\frac{a'(s)}{s^{k-1}}<+\infty,\quad\,\,\, \text{$a(-s)=a(s)$,\,\,\, for all $s\in\R$,\,\,\, 
} \\
\label{specialnonu2}
&  0<\lim_{s\to-\infty}\frac{f(s)}{|s|^{p_-}}<+\infty,\quad\,\,\, \text{$k+1<p_-<\frac{k}{2}+\frac{k+2}{2}\frac{N+2}{N-2}$\,\,\, for $N\geq 3$,} \\
\label{specialnonu3}
&  0<\lim_{s\to-\infty}\frac{f(s)}{|s|^{p_-}}<+\infty,\quad\,\,\, \text{$p_->k+1$\,\,\, for $N=1,2$,} \\
\label{specialnonu4}
&  0<\lim_{s\to+\infty}\frac{f'(s)}{s^{p_+-1}}<+\infty,\quad\,\,\, \text{$p_+>k+1$\,\,\, for $N\geq 1$,} 
\end{align}
then~\eqref{prob} admits two solutions, one positive and one sign-changing.
\end{theorem}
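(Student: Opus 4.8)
The plan is to use the change of variable associated with \eqref{cauchy} to turn \eqref{prob} into a semilinear boundary blow-up problem, to single out there the minimal positive large solution, and then to build a second large solution lying below it, which is then automatically sign-changing. Since $a\geq\nu>0$, \eqref{cauchy} has a global solution $g$ which is a strictly increasing $C^2$ diffeomorphism of $\R$ with $g(0)=0$, and, exactly as in the existence Propositions, $u=g(v)$ transforms \eqref{prob} into
\begin{equation}
\label{mo:reduced}
\Delta v=h(v)\ \text{ in }\Omega,\qquad v(x)\to+\infty\ \text{ as }{\rm d}(x,\partial\Omega)\to0,\qquad h(t):=\frac{f(g(t))}{\sqrt{a(g(t))}}.
\end{equation}
By \eqref{segnomonot}, $h>0$ and, since $h'(t)=g'(t)\,[2f'a-fa'](g(t))/(2a(g(t))^{3/2})$, also $h'\geq0$ on $(T_0,+\infty)$ with $T_0:=g^{-1}(R)$; moreover $0$ is a subsolution of \eqref{mo:reduced}. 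As $a,f$ satisfy one of the existence conditions of Theorem~\ref{main-ex}, $h$ obeys the Keller--Osserman condition at $+\infty$, so the standard monotone construction starting from the subsolution $0$ produces a minimal positive large solution $v_1>0$ of \eqref{mo:reduced}, and $g(v_1)$ is the positive solution of \eqref{prob}. Because $v_1$ is minimal among positive large solutions, any large solution $v$ of \eqref{mo:reduced} with $v\leq v_1$ and $v\not\equiv v_1$ must be nonpositive somewhere, hence, blowing up on $\partial\Omega$, it changes sign, and $g(v)$ is then a sign-changing solution of \eqref{prob}; so it suffices to exhibit one such $v$. Substituting $t=g^{-1}(s)$ in the growth hypothesis, I would record that $0<\lim_{t\to-\infty}h(t)/|t|^q<+\infty$ with $1<q<\frac{N+2}{N-2}$ (no upper bound if $N=1,2$); in particular $h(t)\to+\infty$ as $t\to-\infty$.

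To get the second solution I would look for $v=v_1-w$ with $w\geq0$, $w=0$ on $\partial\Omega$, so that $w$ solves $-\Delta w=h(v_1-w)-h(v_1)$ in $\Omega$, and set up on $\hsob$ — truncating $h$ for large arguments if $h$ is supercritical at $+\infty$, so that the negative part of $w$ does not feel that growth — the functional
\begin{equation*}
\Phi(w)=\frac12\into|Dw|^2-\into\big(H(v_1)-H(v_1-w)-h(v_1)w\big)\,dx,\qquad H'=h,
\end{equation*}
whose nontrivial critical points are nonnegative (test with $w^-$, using that $h$ is increasing at $+\infty$) and solve the equation. I would check that the integral is finite — near $\partial\Omega$ the integrand is $O(h'(v_1)w^2)$ and the boundary blow-up rate of $v_1$ from the asymptotic analysis, combined with Hardy's inequality, makes it integrable, while away from $\partial\Omega$ it is controlled by $\eps w^2+C_\eps|w|^{q+1}$ with $q+1<\frac{2N}{N-2}$ — and that $\Phi$ has the Mountain Pass geometry: $\Phi(0)=0$; for $w$ small $\Phi(w)\simeq\frac12\into(|Dw|^2+h'(v_1)w^2)$, so either $-\Delta+h'(v_1)$ is positive definite and $\Phi\geq\delta>0$ on a small sphere, or it is not and $\Phi$ is negative near $0$, giving a nontrivial local minimizer at once; and $\Phi(tw_0)\to-\infty$ as $t\to+\infty$ for a fixed $0\leq w_0\not\equiv0$ supported in $\Omega$, since $v_1-tw_0\to-\infty$ on $\mathrm{supp}\,w_0$ while $h(\tau)\sim c_-|\tau|^q$ makes the potential grow like $\frac{c_-}{q+1}t^{q+1}\into w_0^{q+1}$. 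The Ambrosetti--Rabinowitz condition with exponent $q+1>2$ holds for $w$ large, and then the compact embedding $\hsob\hookrightarrow\elle{q+1}$ yields Palais--Smale, so the Mountain Pass Theorem gives a critical point $w_2\not\equiv0$. It is nonnegative and bounded — near $\partial\Omega$, where $v_1$ is large and $w_2<v_1$, one has $h(v_1-w_2)-h(v_1)\leq0$, so $w_2$ is subharmonic there and bounded by its interior values, and interior $L^\infty_{\mathrm{loc}}$ bounds follow from the subcritical growth — hence $v_2:=v_1-w_2$ is a large solution of \eqref{mo:reduced} with $v_2\leq v_1$, $v_2\not\equiv v_1$, therefore sign-changing, and $g(v_2)$ is the desired sign-changing solution.

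For the particular case I would check that \eqref{specialnonu1} (with evenness) gives $a(s)\sim a_\infty|s|^k$ as $|s|\to+\infty$ and, together with \eqref{specialnonu4}, $f(s)\sim f_\infty s^{p_+}$, so $a,f$ satisfy \eqref{caso:potpot} with $p=p_+>k+1$ and $2f'a-fa'\sim(2p_+-k)f_\infty a_\infty s^{p_++k-1}>0$ for large $s$, i.e.\ \eqref{segnomonot}; moreover, from $g'=1/\sqrt{a\circ g}$ with $a$ polynomial one has $g^{-1}(s)\sim c_1|s|^{(k+2)/2}$ and $f(s)/\sqrt{a(s)}\sim c_2|s|^{p_--k/2}$ as $s\to-\infty$, so matching the exponents in the growth hypothesis forces $q=\frac{2p_--k}{k+2}$, whence $q>1\iff p_->k+1$ and, for $N\geq3$, $q<\frac{N+2}{N-2}\iff p_-<\frac{k}{2}+\frac{k+2}{2}\frac{N+2}{N-2}$, while for $N=1,2$ only $q>1$ is needed — exactly \eqref{specialnonu2}--\eqref{specialnonu3}; thus the particular statement reduces to the general one.

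The hard part will be reconciling the two incompatible growths of $h$: at $+\infty$ it satisfies only the (possibly supercritical) Keller--Osserman condition, while at $-\infty$ it is subcritical. This is what forces the truncation at $+\infty$ and the a posteriori recovery of nonnegativity and boundedness of the mountain-pass solution, and — most delicately — the proof that $w_2$ does not absorb the blow-up of $v_1$, i.e.\ that $v_2$ is still a large solution; this is where the $C^2$, convex geometry of $\Omega$ enters, via uniform interior/exterior ball conditions and the resulting two-sided control of $v_1$, hence of $h(v_1)$ and $h'(v_1)$, near $\partial\Omega$. If working directly at the singular background $v_1$ proves awkward, a more robust variant is to run the mountain-pass argument on the penalized problems $\Delta v=h(v)$ in $\Omega$, $v=n$ on $\partial\Omega$, whose minimal positive solutions are bounded and increase to $v_1$: for each large $n$ one obtains a second solution $v_{1,n}-w_{2,n}$ with mountain-pass level bounded above and below uniformly in $n$ (use a test path supported in a fixed interior region), hence $\{w_{2,n}\}$ bounded in $\hsob$; then one bounds $v_{1,n}-w_{2,n}$ from below on compact subsets by a Keller--Osserman estimate applied to its negative part and passes to the limit in $C^2_{\mathrm{loc}}(\Omega)$, the uniformity up to $\partial\Omega$ needed for this limit being where the real work lies.
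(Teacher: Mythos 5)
Your reduction of \eqref{prob} to the semilinear problem $\Delta v=h(v)$, $h=\tfrac{f\circ g}{\sqrt{a\circ g}}$, via the change of variable of \eqref{cauchy}, and your verification in the special case that \eqref{specialnonu1}--\eqref{specialnonu4} force $q=\tfrac{2p_--k}{k+2}$ with $q>1$ (and $q<\tfrac{N+2}{N-2}$ for $N\geq3$) if and only if \eqref{specialnonu2}--\eqref{specialnonu3} hold, coincide with the paper's argument (Propositions~\ref{nonuniq} and~\ref{nonuniq-special}). At that point, however, the paper does not re-prove the semilinear two-solution result: it checks the hypotheses of \cite[Theorem 1]{AR} ($h(0)=0$, $h$ positive and nondecreasing on a half-line $(R,+\infty)$, the Keller--Osserman condition through {\bf E}, and the subcritical power behavior of $h$ at $-\infty$) and quotes that theorem in the convex $C^2$ domain to obtain $v_1>0$ and a sign-changing large solution $v_2$, transferring back through Lemma~\ref{colleg}. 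You instead attempt to re-derive that theorem by a mountain-pass argument at the singular background $v_1$, and this is where the proposal has a genuine gap rather than a complete alternative proof.

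Concretely, the decisive step --- that the mountain-pass correction $w_2$ is bounded (or at least negligible compared with $v_1$) up to $\partial\Omega$, so that $v_2=v_1-w_2$ still blows up on all of $\partial\Omega$, is a solution of the untruncated equation, and is distinct from $v_1$ --- is precisely what you postpone (``where the real work lies''), and without it nothing is established: the truncation of $h$ at $+\infty$ can only be removed once this boundary control is available, and it is also here that the convexity of $\Omega$ must actually be used. Several intermediate claims also need repair as written: by \eqref{segnomonot}, $h$ is positive and nondecreasing only on $(g^{-1}(R),+\infty)$, not on $(0,+\infty)$, so the assertion ``$h>0$'', the monotone construction of a minimal positive large solution from the subsolution $0$, and the nonnegativity of critical points obtained by testing with $w^-$ (which uses monotonicity of $h$ below $v_1$, where $v_1$ may be smaller than $g^{-1}(R)$) all require additional argument; the finiteness and geometry of $\Phi$ near $\partial\Omega$ are borderline, since in the power case $h'(v_1(x))$ behaves like a constant times $({\rm d}(x,\partial\Omega))^{-2}$, so Hardy's inequality gives integrability but not the claimed positivity on a small sphere; and the Palais--Smale property at the singular background, as well as the uniform-up-to-the-boundary passage to the limit in your penalized variant, are asserted rather than proved. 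Either invoke \cite[Theorem 1]{AR} after verifying its hypotheses, as the paper does, or supply the boundary estimates you defer; as it stands the argument is a program for a proof, not a proof.
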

\noindent
The proof of Theorem~\ref{molt} follows by Propositions~\ref{nonuniq} and~\ref{nonuniq-special}.
\vskip4pt

\noindent
Concerning the symmetry of solutions to~\eqref{prob-ball}, we have the following

\begin{theorem}[\textcolor{blue}{Symmetry of solutions}]
\label{main-sim} Let $a$ and $f$ be of class $C^2(\R)$.
The following facts hold:
\begin{enumerate}
\item Assume that there exist $k>0$, $\beta>0$, $a_\infty>0$ and $f_\infty>0$ such that
\begin{align}
	\label{ader-asi}
&	
\lim_{s\to +\infty}\frac{a'(s)} {s^{k-1}}=k a_\infty ,\quad	
\lim_{s\to +\infty}\frac{a''(s)} {s^{k-2}}=k(k-1)a_\infty ,   \\
	\label{fder-asi}
&	
\lim_{s\to +\infty}\frac{f''(s)} {e^{2\beta s}}=4\beta^2 f_\infty,
\end{align}
(only the right limit in \eqref{ader-asi} for $k>1$).
Then any solution to~\eqref{prob-ball} is radially symmetric and increasing.
\vskip3pt
\item Assume that there exist $k>0$, $p>1$, $a_\infty>0$ and $f_\infty>0$
such that~\eqref{ader-asi} hold and 
\begin{equation}
	\label{fpotder-asi}
\lim_{s\to +\infty}\frac{f''(s)} {s^{p-2}}=p(p-1) f_\infty.
\end{equation}
Then, if $p>k+1$, any solution to~\eqref{prob-ball} is radially symmetric and increasing.
\vskip3pt
\item Assume that there exist $\gamma>0$, $\beta>0$, $a_\infty>0$ and $f_\infty>0$ such that~\eqref{fder-asi} holds and
\begin{equation*}
\lim_{s\to +\infty}\frac{a''(s)} {e^{2\gamma s}}=4\gamma^2 a_\infty.
\end{equation*}
Then, if $\beta>\gamma$, any solution to~\eqref{prob-ball} is radially symmetric and increasing.
\vskip3pt
\item Assume that there exist $\gamma>0$, $\beta>0$, $a_\infty>0$ and $f_\infty>0$ 
such that~\eqref{fder-asi} holds and
\begin{equation}\label{ip-log}
\lim_{s\to +\infty}\frac{a'(s)s} {(\log s)^{2\gamma-1}}=2\gamma a_\infty,\,\,\,
\lim_{s\to +\infty}\frac{a''(s)s^2} {(\log s)^{2\gamma-1}}=-2\gamma a_\infty.
\end{equation}
Then, any solution to~\eqref{prob-ball} is radially symmetric and increasing.
\vskip3pt
\item Assume  that~\eqref{fpotder-asi} and \eqref{ip-log} hold.
Then, if $p>1$, any solution to~\eqref{prob-ball} is radially symmetric and increasing.
\end{enumerate}
\end{theorem}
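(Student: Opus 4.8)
The plan is to reduce \eqref{prob-ball} to a semilinear boundary blow-up problem via the change of variable associated with \eqref{cauchy}, and then to run the moving plane method on the resulting equation, using the sharp boundary blow-up asymptotics to initiate the reflection procedure. Concretely, I would pass to the new unknown $v:=g^{-1}(u)$, that is $u=g(v)$: since $a\geq\nu>0$, the function $g$ from \eqref{cauchy} is an increasing diffeomorphism of $\R$ with $g'=1/\sqrt{a\circ g}$, and inserting $Du=g'(v)Dv$ one finds $a(u)Du=\sqrt{a(g(v))}\,Dv$, so that a short computation turns \eqref{prob-ball} into the semilinear problem
\begin{equation*}
\Delta v=h(v)\quad\text{in $B_1(0)$,}\qquad v(x)\to+\infty\ \text{ as }\ {\rm d}(x,\partial B_1(0))\to 0,\qquad h(t):=\frac{f(g(t))}{\sqrt{a(g(t))}}.
\end{equation*}
As $a,f\in C^2(\R)$ one has $g\in C^2(\R)$ and hence $h\in C^2(\R)$, so $h$ is locally Lipschitz and, by interior elliptic regularity, $v\in C^2(B_1(0))$; moreover radial symmetry and monotonicity of $u$ are equivalent to those of $v$, so it suffices to argue for $v$.

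Next I would collect the needed asymptotic information: in each of the cases (1)--(5) the second order hypotheses \eqref{ader-asi}, \eqref{fder-asi}, \eqref{fpotder-asi}, \eqref{ip-log}, together with the behaviour of $g$ at $+\infty$ dictated by the growth of $a$, give precise asymptotics for $h$, $h'$ and $h''$ at $+\infty$: up to a positive constant and lower order terms $h$ has one of the model forms $t^q$, $e^{ct}$, $(\log t)^q$, and it is eventually positive, increasing, convex and satisfies the Keller--Osserman condition --- which is precisely why $(a,f)$ falls in an existence regime of Theorem~\ref{main-ex}. Hence the boundary blow-up rate results of the paper for the semilinear equation $\Delta v=h(v)$ apply and provide a strictly decreasing profile $\psi$ with
\begin{equation*}
v(x)=\psi\big({\rm d}(x,\partial B_1(0))\big)\,(1+o(1))\qquad\text{uniformly as }{\rm d}(x,\partial B_1(0))\to 0,
\end{equation*}
possibly with a second, again spatially constant, curvature correction, since $\partial B_1(0)$ has constant mean curvature.

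Then I would run the moving plane method in an arbitrary direction $e_1$. For $\lambda\in(0,1)$ put $\Sigma_\lambda=\{x\in B_1(0):x_1>\lambda\}$, let $x^\lambda$ be the reflection of $x$ across $\{x_1=\lambda\}$, and set $w_\lambda(x)=v(x^\lambda)-v(x)$; then $\Delta w_\lambda=c_\lambda(x)w_\lambda$ in $\Sigma_\lambda$ with $c_\lambda$ locally bounded (by local Lipschitz continuity of $h$) and $w_\lambda=0$ on $\{x_1=\lambda\}\cap B_1(0)$. The decisive point is that for $x\in\Sigma_\lambda$ one has $|x^\lambda|<|x|$, hence ${\rm d}(x^\lambda,\partial B_1(0))>{\rm d}(x,\partial B_1(0))$; combined with the monotonicity of $\psi$ and the expansion above, this forces $w_\lambda<0$ on $\Sigma_\lambda$ near $\partial B_1(0)$. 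For $\lambda$ close to $1$ the whole cap $\Sigma_\lambda$ lies in the boundary layer, which already gives $w_\lambda\leq 0$ there and starts the procedure; the usual Gidas--Ni--Nirenberg argument --- maximum principle on compact subcaps away from $\partial B_1(0)$, boundary asymptotics near $\partial B_1(0)$ --- then lets one decrease $\lambda$ to $0$ keeping $w_\lambda\leq 0$. Doing the same with $-e_1$ and using arbitrariness of the direction shows that $v$ is radially symmetric, while the strong maximum principle and the Hopf lemma applied to $w_\lambda$ for $0<\lambda<1$ give $\partial_r v>0$, i.e.\ $v$ (hence $u$) is increasing.

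The main obstacle is exactly the starting estimate $w_\lambda<0$ near $\partial B_1(0)$: close to the corner $\partial B_1(0)\cap\{x_1=\lambda\}$ both $v(x)$ and $v(x^\lambda)$ blow up and $w_\lambda$ is an indeterminate form, so its sign cannot be read off by soft comparison and genuinely depends on the \emph{sharp} expansion of $v$ obtained above --- namely on the fact that $\psi({\rm d}(x))-\psi({\rm d}(x^\lambda))$ dominates the remainder. This is why the $C^2$ regularity of $a$ and $f$ and the second order conditions \eqref{ader-asi}--\eqref{ip-log} are assumed: they pin down $h$, hence the profile $\psi$ and the blow-up behaviour of $v$, accurately enough for this domination to hold. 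The case distinction (1)--(5) changes only the explicit form of $\psi$, not the structure of the argument.
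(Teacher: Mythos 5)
Your first step (the substitution $u=g(v)$ reducing \eqref{prob-ball} to $\Delta v=h(v)$ with $h=(f\circ g)/\sqrt{a\circ g}$) is exactly what the paper does via Lemma~\ref{colleg}. After that, however, the paper does not run a moving plane argument at all: it invokes the symmetry theorem of Porretta--Veron \cite[Theorem 1.1]{PV}, whose key hypothesis is that $h$ is \emph{asymptotically convex}, and the whole content of Section~\ref{se-simm} is the computation of $h''$ in \eqref{hsecondo} and the verification, in each of the five regimes, of the sufficient condition \eqref{hsecondo-cond}, i.e.\ $2f''a^2-3f'a'a-fa''a+2f(a')^2>0$ for large $s$. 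This is precisely where the $C^2$ regularity and the second-order limits \eqref{ader-asi}, \eqref{fder-asi}, \eqref{fpotder-asi}, \eqref{ip-log} are used. Your sketch never uses these hypotheses in a load-bearing way (you only say they ``pin down'' the blow-up profile), which is already a sign that the mechanism actually driving the theorem is absent from your argument.

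The concrete gap is the initialization and propagation of the moving plane near $\partial B_1(0)$. The available first-order estimate is multiplicative, $v(x)=\psi({\rm d}(x))(1+o(1))$, and near the corner $\partial B_1(0)\cap\{x_1=\lambda\}$ a point $x\in\Sigma_\lambda$ and its reflection $x^\lambda$ satisfy ${\rm d}(x^\lambda)-{\rm d}(x)\to 0$ much faster than ${\rm d}(x)$ itself; there the main-term difference $\psi({\rm d}(x))-\psi({\rm d}(x^\lambda))$ is \emph{not} bounded below by anything comparable to the error $\psi({\rm d}(x))\,o(1)$, so the claimed domination fails and the sign of $w_\lambda$ cannot be read off. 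The sharper additive expansions of Theorem~\ref{blowI} do not rescue this: they hold only in restricted ranges ($p>2k+3$, $\beta>2\gamma$, $p>3$), which do not cover all cases of Theorem~\ref{main-sim} (for instance $k+1<p\leq 2k+3$ in item (2)), and even an additive $o(1)$ bound does not by itself give $w_\lambda\leq 0$ up to the corner, since both values blow up and the $o(1)$'s of $v(x)$ and $v(x^\lambda)$ are not comparable to the vanishing difference of the profiles. This corner obstruction is exactly why the symmetry of large solutions is delicate and why the paper outsources it to \cite{PV} (or, under the global condition \eqref{simcond}, to \cite[Corollary 1.7]{CD}), where asymptotic convexity, not a reflection argument, is the engine. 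As written, your proof would only be completable by essentially reproving the Porretta--Veron theorem, and the step you propose for it does not work.
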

\noindent
The proof of Theorem~\ref{main-sim} follows by Propositions~\ref{potexp-rad}, \ref{potpot-rad}, 
\ref{expexp-rad}, \ref{logexp-rad} and \ref{logpol-rad}.
\vskip4pt
\noindent
Concerning the blow-up behavior of solution, we have two results. The first is the following

\begin{theorem}[\textcolor{blue}{Boundary behavior I}]
	\label{blowI}
Let $\Omega$ be a bounded domain of $\R^N$ satisfying an inner and an outer sphere
condition on $\partial\Omega$. Let $\eta$ denote the unique solution to 
problem
\begin{equation}
	\label{ODEeta}
\eta'=-\sqrt{2F\circ g\circ\eta},\qquad \lim_{t\to 0^+}\eta(t)=+\infty.
\end{equation}
Then any solution $u\in C^2(\Omega)$ to~\eqref{prob} satisfies
\begin{equation*}
u(x)=g\circ\eta({\rm d}(x,\partial\Omega))+o(1),
\end{equation*}
whenever ${\rm d}(x,\partial\Omega)$ goes to zero, provided that one of the following situations occurs:
\begin{enumerate}
	\item  Conditions~\eqref{caso:potexp} hold.
	\item  Conditions~\eqref{caso:potpot} hold with $p>2k+3$.
	\item  Conditions~\eqref{caso:expexp} with $\beta>2\gamma$.
	\item  Conditions~\eqref{caso:logexp} hold.
	\item  Conditions~\eqref{caso:logpot} hold with $p>3$.
\end{enumerate}
\end{theorem}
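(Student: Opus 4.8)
The plan is to convert \eqref{prob} into a semilinear boundary blow-up problem through the change of variable dictated by \eqref{cauchy}, to recognize $g\circ\eta$ as an approximate solution, and then to squeeze any solution between sharp radial barriers supported on the inner and outer tangent spheres. Since $g'=(a\circ g)^{-1/2}>0$ and $a\ge\nu>0$, the map $g$ is an increasing $C^2$ diffeomorphism of $\R$, and in each of the regimes \eqref{caso:potexp}, \eqref{caso:potpot}, \eqref{caso:expexp}, \eqref{caso:logexp}, \eqref{caso:logpot} one has $g(s)\to+\infty$ as $s\to+\infty$. Setting $v:=g^{-1}(u)$ and using $Du=g'(v)Dv$, a computation gives $\dvg(a(u)Du)=\sqrt{a(g(v))}\,\Delta v+\tfrac{a'(g(v))}{2a(g(v))}|Dv|^2$, so that \eqref{prob} is equivalent to $\Delta v=h(v)$ in $\Omega$ with $v\to+\infty$ on $\partial\Omega$, where $h:=(f\circ g)/\sqrt{a\circ g}$. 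Its primitive is $H:=F\circ g$, $F(s):=\int_0^s f$, so \eqref{ODEeta} reads $\eta'=-\sqrt{2H(\eta)}$, whence $\eta''=h(\eta)$; consequently, writing ${\rm d}={\rm d}(\cdot,\partial\Omega)$, $\Phi:=g\circ\eta$ and using $|D{\rm d}|=1$, one finds
\begin{equation*}
a(\Phi({\rm d}))\Delta\Phi({\rm d})+\tfrac{a'(\Phi({\rm d}))}{2}|D\Phi({\rm d})|^2-f(\Phi({\rm d}))=\sqrt{a(g(\eta({\rm d})))}\,\eta'({\rm d})\,\Delta{\rm d},
\end{equation*}
i.e.\ $\Phi({\rm d})=g(\eta({\rm d}))$ solves \eqref{prob} exactly in the flat model and in $\Omega$ fails only by a term of the size of $\eta'({\rm d})$, since the inner/outer sphere condition keeps $\Delta{\rm d}$ bounded near $\partial\Omega$ (indeed $\Delta{\rm d}=-(N-1)\mathcal H+O({\rm d})$).

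\emph{Leading order and ODE asymptotics.} I would first prove the first-order behavior $v(x)=\eta({\rm d}(x))(1+o(1))$ by the standard sub/supersolution argument: since $\eta''=h(\eta)$ and $|\eta'|/\eta''=|\eta'|/h(\eta)\to0$ as the argument tends to $0$, the functions $\eta((1-\eps){\rm d})$ and $\eta((1+\eps){\rm d})$ are, respectively, a supersolution and a subsolution of $\Delta v=h(v)$ in a one-sided neighborhood $\{0<{\rm d}<\delta_\eps\}$ of $\partial\Omega$; comparing with $v$ on $\{{\rm d}=\delta_\eps\}$ via the Keller--Osserman a priori bound (available since $h$ satisfies the Keller--Osserman condition in the regimes considered, cf.\ Theorem~\ref{main-ex}) and letting $\eps\to0$ yields the claim. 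Next I would extract, from the prescribed behavior of $a$ and $f$ at $+\infty$, the asymptotics of $g$ (e.g.\ $g(s)\sim cs^{2/(k+2)}$ when $a(s)\sim a_\infty s^k$, $g(s)\sim\gamma^{-1}\log s$ when $a(s)\sim a_\infty e^{2\gamma s}$, $g(s)\sim cs(\log s)^{-\gamma}$ when $a(s)\sim a_\infty(\log s)^{2\gamma}$), hence of $h$, $H$, and, integrating \eqref{ODEeta}, of $\eta(t)$, $\eta'(t)$ and $h'(\eta(t))$ as $t\to0^+$. The first curvature correction $w$ to the profile solves the linearized equation $w''-(N-1)\mathcal H w'-h'(\eta)w=(N-1)\mathcal H\,\eta'$, whose relevant solution has size $|\eta'(t)|/h'(\eta(t))$; the content of items (1)--(5) is precisely that this quantity is infinitesimal as $t\to0^+$, which holds unconditionally when $f$ is exponential (cases (1) and (4)) and, otherwise, exactly when $p>2k+3$ (case (2)), $\beta>2\gamma$ (case (3)) or $p>3$ (case (5)).

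\emph{Refinement and conclusion.} To obtain the refined estimate, fix $x_0\in\partial\Omega$. By the inner sphere condition there is a ball $B_R(z)\subset\Omega$ tangent at $x_0$; since the large (maximal) solution $V_R$ of $\Delta V=h(V)$ on $B_R(z)$ dominates every subsolution, $v\le V_R$ in $B_R(z)$, while the ODE analysis above, with $(N-1)\mathcal H$ replaced by $(N-1)/|y-z|$, gives $V_R(y)=\eta(R-|y-z|)+o(1)$ as $|y-z|\to R^-$; since ${\rm d}(x)=R-|x-z|$ along the inward normal at $x_0$, this yields $v(x)\le\eta({\rm d}(x))+o(1)$. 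For the reverse inequality, the outer sphere condition provides $B_\rho(y)$ with $\overline{B_\rho(y)}\cap\overline\Omega=\{x_0\}$, so that $\Omega\subset A:=B_{R'}(y)\setminus\overline{B_\rho(y)}$ for $R'$ large; letting $V_A$ be the large solution of $\Delta V=h(V)$ on $A$, comparison on $\Omega$ (where $v=+\infty\ge V_A$ on $\partial\Omega$, the form $\infty-\infty$ at $x_0$ being resolved by the first-order asymptotics) gives $v\ge V_A$ in $\Omega$, and $V_A(x)=\eta({\rm d}(x))+o(1)$ near $x_0$ by the same ODE analysis near the inner sphere of $A$. The sphere radii being uniform over the compact set $\partial\Omega$ and every point near $\partial\Omega$ lying on an inward normal, we conclude $v(x)=\eta({\rm d}(x,\partial\Omega))+o(1)$ as ${\rm d}(x,\partial\Omega)\to0$; finally, since $g$ is Lipschitz ($g'\le\nu^{-1/2}$), $u(x)=g(v(x))=g(\eta({\rm d}(x,\partial\Omega)))+o(1)$, which is the assertion.

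\emph{Main difficulty.} The crux is the sharp two-term control of $\eta$ and of the radial large solutions $V_R,V_A$ --- namely that they deviate from $\eta({\rm dist})$ by a genuine $o(1)$ rather than only an $O(1)$ --- which has to be done separately in each of the five regimes and is exactly what forces the strict thresholds $p>2k+3$, $\beta>2\gamma$, $p>3$. The remaining points are routine: the comparison principle for $\Delta v=h(v)$ when $h$ is monotone only for large values (handled by modifying $h$ below a level, which is immaterial as all functions involved are large near $\partial\Omega$), and the comparisons at the blow-up boundary (handled by monotone exhaustion of the infinite boundary data together with the first-order asymptotics).
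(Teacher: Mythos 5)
Your reduction to the semilinear problem is exactly the paper's starting point: via \eqref{cauchy} one sets $v=g^{-1}(u)$, so that $\Delta v=h(v)$ with $h=(f\circ g)/\sqrt{a\circ g}$ (Lemma~\ref{colleg}), and at the end the estimate for $v$ is transferred back to $u=g(v)$ through the Lipschitz-type bound \eqref{lipschitz-g}, since $a\geq\nu>0$. The divergence is in the middle step. The paper does not redo the boundary-layer analysis: it quotes the characterization of Costin--Dupaigne ([CD, Theorem 1.10]), by which $v(x)=\eta({\rm d}(x,\partial\Omega))+o(1)$ holds if and only if the integral condition \eqref{con-blowup1}, $\lim_{u\to+\infty}\sqrt{F(g(u))}\int_u^{+\infty}\bigl(\int_0^t\sqrt{F(g(s))}\,ds\bigr)F^{-3/2}(g(t))\,dt=0$, is satisfied; the entire content of Theorem~\ref{blowI} then lies in verifying \eqref{con-blowup1} regime by regime (Propositions~\ref{blow-polpol}, \ref{blow-polexp}, \ref{blow-expexp}, \ref{blow-logpot}, \ref{blow-logexp}), and those computations are where the thresholds $p>2k+3$, $\beta>2\gamma$, $p>3$ come from.

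This is where your proposal has a genuine gap. You replace the citation by a from-scratch barrier/linearization scheme, but the decisive quantitative step --- that the radial large solutions on the tangent sphere and annulus, and $\eta$ itself, deviate from $\eta({\rm dist})$ by an additive $o(1)$ --- is only asserted (``has to be done separately in each of the five regimes''), never carried out; that verification \emph{is} the theorem, so the proof is incomplete precisely at its core. Moreover, the pivot you propose for it, namely $|\eta'(t)|/h'(\eta(t))\to 0$ coming from the linearized equation $w''-(N-1)\mathcal H w'-h'(\eta)w=(N-1)\mathcal H\,\eta'$, cannot even be evaluated under the hypotheses of Theorem~\ref{blowI}: conditions \eqref{caso:potexp}--\eqref{caso:logpot} control only $a$ and $f$ themselves, not $a'$ and $f'$, and a $C^1$ function with $f(s)/s^p\to f_\infty$ may have a derivative that oscillates (and is negative along sequences) at all scales, so $h'(\eta(t))$ need not have the size your heuristic assigns to it. The paper's condition \eqref{con-blowup1} is formulated purely in terms of $F\circ g$, i.e.\ in terms of integrals of $f$, exactly to stay within the stated assumptions. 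The same weakness affects your comparison steps: maximal solutions dominating subsolutions, and the sub/supersolution sandwich with $\eta((1\pm\eps){\rm d})$, use a comparison principle for $\Delta v=h(v)$ that requires (at least eventual) monotonicity of $h$, which is likewise not implied by the asymptotic hypotheses; your suggested fix (modifying $h$ below a level) does not address non-monotonicity at arbitrarily large arguments. To make your route work you would either have to strengthen the hypotheses (as the paper does in Theorem~\ref{main-sim}, where second derivatives are prescribed) or reformulate the correction estimate in integral form --- at which point you are back to verifying \eqref{con-blowup1} as in the paper's five propositions.
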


\noindent
The proof of Theorem~\ref{blowI} follows by Propositions~\ref{blow-polpol}, \ref{blow-polexp}, 
\ref{blow-expexp}, \ref{blow-logpot}, \ref{blow-logexp}.
\vskip4pt
\noindent
In general, in addition to the blow-up term $g\circ\eta({\rm d}(x,\partial\Omega))$, the expansion of a large solution $u$ could
contain other blow-up terms, one of them typically depends upon the local mean curvature of the boundary. We will study this
in a particular, but meaningful, situation.

\noindent
For $p>k+1$, in the framework of \eqref{caso:potpot}, let us now introduce the positive constants
\begin{equation}
	\label{defGamma}
\Gamma:=\left[\frac{p-k-1}{\sqrt{2(p+1)}} \frac{\sqrt{f_\infty}}{\sqrt {a_\infty}}\right]^{\frac{2}{k+1-p}} \,\,\quad\text{and}\quad\,\,
\Gamma ':=\frac {2(N-1)}{p+k+3}\Gamma.
\end{equation}
When $a$ and $f$ behave like powers at infinity, we have the following blow-up characterization

\begin{theorem}[\textcolor{blue}{Boundary behavior II}]
	\label{blowII}
	Let $\Omega$ be a bounded domain of $\R^N$ of class $C^4$
	and assume that \eqref{caso:potpot} hold with $p>k+1$. 
	If conditons \eqref{segnomonot} hold with $R=0$ and $\eta$ is as in \eqref{ODEeta}, let us set
	\begin{align*}
	& J(t):=\frac{N-1}{2}\int_{0}^t \frac{\int_{0}^{\eta(s)}\sqrt{2F(g(\sigma))}d\sigma}{F(g(\eta(s)))}ds,\qquad t>0, \\
	& \T(x):=\frac{({\rm d}(x,\partial\Omega))^{\frac{k+2}{k+1-p}}}{\min\{u^{k/2}(x)),
	({\rm d}(x,\partial\Omega)-{\mathcal H}(\sigma(x))J({\rm d}(x,\partial\Omega)))^{k/(k+1-p)}\}},
	\qquad x\in\Omega,
\end{align*}
where $\sigma(x)$ denotes 
the projection on $\partial\Omega$ of a $x\in\Omega$ and ${\mathcal H}$ is the mean curvature of $\partial\Omega$.
	Then there exists a positive constant $L$ such that
	\begin{equation*}
	|u(x)-g\circ\eta({\rm d}(x,\partial\Omega)-{\mathcal H}(\sigma(x))J({\rm d}(x,\partial\Omega)))|\leq L\T(x)o({\rm d}(x,\partial\Omega)),
	\end{equation*}
	whenever ${\rm d}(x,\partial\Omega)$ goes to zero.
	Furthermore, the following facts hold: 
	\begin{enumerate}
	\item If $p>2k+3$ (even if \eqref{segnomonot} do not hold), then
	$$
	u(x)=\frac{\Gamma}{({\rm d}(x,\partial\Omega))^{\frac{2}{p-k-1}}}(1+o(1)),
	$$
	whenever $x$ approaches $\partial\Omega$.
	\vskip4pt
	\item If \eqref{segnomonot} hold with $R=0$ and $k+1<p\leq 2k+3$, then 
	$$
	u(x)=\Gamma\frac{1}{({\rm d}(x,\partial\Omega))^{\frac{2}{p-k-1}}}(1+o(1))
	+\Gamma'\frac{{\mathcal H}(\sigma(x)) }{({\rm d}(x,\partial\Omega))^{\frac{3+k-p}{p-k-1}}}(1+o(1)),
	$$
	whenever $x$ approaches $\partial\Omega$.
\end{enumerate}
\end{theorem}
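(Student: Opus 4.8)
The plan is to convert \eqref{prob} into a semilinear problem by means of \eqref{cauchy}, to perform a boundary-layer analysis in which the mean-curvature correction is built explicitly into the argument of the one–dimensional profile, and finally to read off the two power-type regimes.

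\emph{Reduction.} Putting $u=g(v)$, so that $Du=(a(g(v)))^{-1/2}Dv$ and $a(u)Du=\sqrt{a(g(v))}\,Dv$, one gets $\dvg(a(u)Du)-\tfrac{a'(u)}2|Du|^2=\sqrt{a(g(v))}\,\Delta v$; hence $u$ solves \eqref{prob} if and only if $v=g^{-1}(u)$ solves $\Delta v=h(v)$ in $\Omega$ with $v\to+\infty$ at $\partial\Omega$, where $h:=(f\circ g)/\sqrt{a\circ g}$. With $H:=\int_0^{\cdot}h=F\circ g$, the function $\eta$ of \eqref{ODEeta} satisfies $\eta'=-\sqrt{2H\circ\eta}$ and $\eta''=h\circ\eta$, i.e.\ it is exactly the one–dimensional boundary layer of $\Delta v=h(v)$. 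Under \eqref{caso:potpot} with $p>k+1$ one has $g(s)\sim A s^{2/(k+2)}$, $F(t)\sim\frac{f_\infty}{p+1}t^{p+1}$, $H(s)\sim B s^{2(p+1)/(k+2)}$, whence $h(v)\sim C v^{q}$ with $q=\frac{2p-k}{k+2}>1$; in particular the Keller--Osserman integral converges, $\eta$ and $J$ are well defined, and $\eta(t)\sim C_\eta t^{-(k+2)/(p-k-1)}$ as $t\to0^+$, the resulting leading exponent matching the constant $\Gamma$ of \eqref{defGamma}.

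\emph{The general estimate.} I work in a tubular neighbourhood of $\partial\Omega$ in which the Fermi coordinates $x\mapsto({\rm d}(x,\partial\Omega),\sigma(x))$ are of class $C^{3}$ (here the hypothesis $\Omega\in C^{4}$ is used), so that $|Dd|=1$ and $\Delta d(x)=-(N-1)\mathcal H(\sigma(x))+O({\rm d}(x,\partial\Omega))$ with the remainder uniformly controlled, writing $d$ for ${\rm d}(\cdot,\partial\Omega)$. For $\eps>0$ small I consider the barriers $w_\eps^{\pm}(x):=g\circ\eta\big(d(x)-(\mathcal H(\sigma(x))\mp\eps)J(d(x))\big)$, possibly with a further lower–order term absorbing the tangential derivatives of $\sigma\mapsto\mathcal H(\sigma)$, and insert $g^{-1}(w_\eps^{\pm})=\eta(d-(\mathcal H\mp\eps)J(d))$ into $\Delta(\cdot)-h(\cdot)$. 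Using $\eta''=h\circ\eta$, $\eta'^{2}=2H\circ\eta$ and expanding $|D(d-(\mathcal H\mp\eps)J(d))|^{2}-1$ and $\Delta(d-(\mathcal H\mp\eps)J(d))$, all mean-curvature–order contributions cancel because $J$ is the solution of exactly the linear ODE $(\eta'^{2}J')'=-(N-1)\eta'^{2}$ — which is precisely the content of the integral formula defining $J$ (differentiate it, using $\frac{d}{dt}\int_0^{\eta(t)}\sqrt{2H}=\eta'(t)\sqrt{2H(\eta(t))}=-2H(\eta(t))$ and $H\circ\eta=\tfrac12\eta'^2$). One is then left with a remainder of the announced order, which forces the choice $\eps=\eps(d)\to0$ as $d\to0$ in the barriers; since \eqref{segnomonot} with $R=0$ forces $h'\ge0$ and $h>0$ on the range of $v$ near $\partial\Omega$, the comparison principle for $\Delta v=h(v)$ sandwiches $v$ between $g^{-1}(w_\eps^{-})$ and $g^{-1}(w_\eps^{+})$; applying $g$ and tracking the resulting error — where $g'(\eta)\sim\eta^{-k/2}$ produces exactly the two competing scales appearing in the denominator of $\T(x)$ — yields the displayed inequality $|u(x)-g\circ\eta(d(x)-\mathcal H(\sigma(x))J(d(x)))|\le L\,\T(x)\,o(d(x))$.

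\emph{The two regimes.} If $p>2k+3$, then $\Omega\in C^{4}$ satisfies the inner and outer sphere conditions, so Theorem~\ref{blowI}(2) applies and $u(x)=g\circ\eta(d(x))+o(1)$; since $g\circ\eta(d)\sim\Gamma d^{-2/(p-k-1)}\to+\infty$ by the asymptotics above, the additive $o(1)$ becomes multiplicative and $u(x)=\Gamma d(x)^{-2/(p-k-1)}(1+o(1))$, with no use of \eqref{segnomonot}. If instead $k+1<p\le2k+3$, I start from the general estimate: the asymptotics of $g$, $H$ and $\eta$ give $J(t)\sim C_J t^{2}$ for a suitable $C_J>0$, so a Taylor expansion of $g\circ\eta(d-\mathcal H(\sigma(x))J(d))$ at $d$ produces the leading term $\Gamma d^{-2/(p-k-1)}(1+o(1))$ together with the curvature term $\Gamma'\mathcal H(\sigma(x))d^{(p-k-3)/(p-k-1)}(1+o(1))$, with $\Gamma,\Gamma'$ as in \eqref{defGamma}; on the other hand $\T(x)\sim\Gamma d^{-2/(p-k-1)}$, so the error $L\,\T(x)o(d)=o(d^{(p-k-3)/(p-k-1)})$ is negligible against the curvature term, and item (2) follows (item (1) is then recovered a fortiori for $p>2k+3$).

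\emph{Main obstacle.} The delicate point is the general estimate: identifying the exact linear ODE for $J$ so that every mean-curvature–order contribution — from $\Delta d$, from $|D(d-\mathcal H J(d))|^{2}$ and from the tangential derivatives of $\mathcal H\circ\sigma$ — cancels, and then, in Fermi coordinates and after the nonlinear change of variable $u=g(v)$, controlling all remainders so as to match precisely the weight $\T(x)$, in particular the $\min$ in its denominator, which encodes the switch between the regions where $u^{k/2}$ and the curvature-corrected distance power dominate.
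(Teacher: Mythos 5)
Your proposal is correct in outline and, for the central estimate, takes a genuinely different route from the paper. Both arguments share the reduction $u=g(v)$, $\Delta v=h(v)$ of Lemma~\ref{colleg}, the transfer of errors back to $u$ through the Lipschitz-type bound \eqref{lipschitz-g} (your remark that $g'(\eta)\sim\eta^{-k/2}$ produces the two scales in the $\min$ of $\T$ is exactly how the weight arises in the paper), the use of the first-order result of Theorem~\ref{blowI} for item (1) when $p>2k+3$ (Proposition~\ref{precise-potpot}, via \eqref{limetaa}), and the same endgame for item (2): $J(t)\sim C_J t^2$ (the paper computes $J'(0^+)=0$ and $J''(0^+)=\frac{(N-1)(p-k-1)}{p+k+3}$, cf.\ \eqref{f2}--\eqref{f4}) followed by a Taylor expansion of $g\circ\eta\big(d-\mathcal H(\sigma(x))J(d)\big)$ and the observation that $\T(x)\,o(d)=o\big(d^{(p-k-3)/(p-k-1)}\big)$ is negligible against the curvature term. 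Where you diverge is the curvature-corrected estimate itself: the paper does not construct barriers at all; it quotes the Bandle--Marcus result \cite{curvature} for $\Delta v=h(v)$ and, in Propositions~\ref{fin1} and~\ref{blowII-prop}, merely verifies its three asymptotic hypotheses \eqref{primacurv}, \eqref{secondacurv}, \eqref{terzacurv} under \eqref{caso:potpot} and \eqref{segnomonot} with $R=0$ (monotonicity and positivity of $h$ coming from \eqref{hprimo}). You instead propose to re-derive that expansion directly, via Fermi coordinates and sub/supersolutions $\eta\big(d-(\mathcal H\mp\eps)J(d)\big)$, and your key identification is correct: since $\eta''=h\circ\eta$ and $(\eta')^2=2F\circ g\circ\eta$, the paper's $J$ satisfies exactly $\big((\eta')^2J'\big)'=-(N-1)(\eta')^2$, which is what makes all curvature-order terms from $\Delta d$ and $|D(d-\mathcal H J(d))|^2$ cancel. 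What your route buys is self-containedness; what it costs is that you must actually carry out the content of \cite{curvature}: a uniform control of the remainders (including the tangential derivatives of $\mathcal H\circ\sigma$, which is where $\Omega\in C^4$ is used), a comparison principle adapted to solutions that are $+\infty$ on $\partial\Omega$ (e.g.\ comparison on translated domains or level sets, using that \eqref{segnomonot} with $R=0$ makes $h$ nondecreasing), and the bookkeeping showing the resulting error in the $v$-variable is $\eta(d)\,o(d)$ so that \eqref{lipschitz-g} yields precisely the weight $\T$. You flag these yourself as the main obstacle; they are standard but substantial, and they are exactly what the paper's citation dispenses with, at the price of the asymptotic verifications \eqref{primacurv}--\eqref{terzacurv} that your argument does not need.
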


\noindent
The proof of Theorem~\ref{blowII} follows by combining Propositions~\ref{precise-potpot} and~\ref{blowII-prop}.

\section{Some remarks}	

\noindent
Some remarks are now in order on the results stated in the previous section.

\begin{remark}[Derivatives blow-up]\rm
According to a result due to Bandle and Marcus \cite[see Section 3, Theorem 3.1]{BM2} 
for general semi-linear equations, not only the solution $u$ blows up 
along the boundary but also the modulus of the gradient $|Du|$ explodes. Hence, concerning problem~\eqref{prob}, a result
in the spirit of Theorem~\ref{blowI} for the gradient could be stated too, under suitable assumptions on the asymptotic 
behavior of $a$ and $f$ yielding to
$$
\lim_{{\rm d}(x,\partial\Omega)\to 0}\frac{a(u)|Du(x)|^2}{2F\circ g\circ\eta({\rm d}(x,\partial\Omega))}= 1,
$$
under suitable assumptions on the domain. In the case \eqref{caso:potpot} this turns into
$$
|Du(x)|=\frac{{\Gamma_\flat}}{({\rm d}(x,\partial\Omega))^{\frac{p+1-k}{p-k-1}}}(1+o(1)),\quad\,\,
\text{for ${\rm d}(x,\partial\Omega)\to 0$},
$$
where $\Gamma_\flat :=\frac {2\Gamma}{p-k-1}$, by exploiting the information provided in \eqref{limetaa}.
\end{remark}	

\begin{remark}[Sign condition on $a$]\rm
Assume that $a$ 
satisfies 
$$
a'(s)s\geq 0,  \,\,\,\quad\text{for every $s\in\R$,}
$$
then a nonnegative solution $u$ of \eqref{prob} satisfies the differential inequality
\begin{equation*}
\,\dvg(a(u)Du)\geq f(u), \,\,\,\,\,\text{in $\Omega$}.
\end{equation*}
In this case the problem become simpler. We will not assume any sign condition on $a$.
\end{remark}

\begin{remark}[Nonnegative solutions]\rm
	\label{signedsol}
Assuming that 
$f(s)=0$ for every $s\leq 0$, any solution to \eqref{prob-semi} is nonnegative (and hence any solution to~\eqref{prob}
is nonnegative). In fact, since $g(0)=0$ and $g'>0$, it is $g(s)\leq 0$ for all $s\leq 0$ and therefore
\begin{equation}
	\label{nullnegg}
h(s)=\frac{f(g(s))}{\sqrt{a(g(s))}}=0,  \,\,\,\quad\text{for every $s\leq 0$}.
\end{equation}
Let $v:\Omega\to\R$ be a classical solution of $\Delta v= h(v)$ 
such that $v(x)\to+\infty$ as $x$ approaches $\partial\Omega$
and define the open bounded set $\Omega_-=\{x\in\Omega: v(x)<0\}$. We have
\begin{equation}
	\label{inclusionebordo}
\partial\Omega_-\subseteq \{x\in\Omega:v(x)=0\}.
\end{equation}
In fact, if $x\in \partial\Omega_-=\overline{\Omega}_-\setminus \Omega_-$, there is a sequence
$(\xi_j)\subset\Omega$ with $\xi_j\to x$ and $v(\xi_j)<0$. It follows that $x\in\Omega$, otherwise
$v(\xi_j)\to+\infty$ if $x\in\partial\Omega$. In addition, $v(x)\leq 0$. Since $x\not\in\Omega_-$, 
we also have $v(x)\geq 0$, proving the claim.
In view of \eqref{nullnegg}, $v$ is harmonic in $\Omega_-$.
Assume by contradiction that $\Omega_-\not =\emptyset$ and let
$\xi\in\Omega_-$. By \eqref{inclusionebordo} and the maximum principle in $\Omega_-$
$$
v(\xi)\geq \min_{\Omega_-}v\geq \min_{\bar\Omega_-}v=\min_{\partial\Omega_-}v=0,
$$
yielding a contradiction. Then $v\geq 0$ on $\Omega$ and for the 
solution $u$ of \eqref{prob}, $u=g(v)\geq 0$. 
\end{remark}

\begin{remark}[Negative large solutions]\rm
In analogy with the study of positively blowing up solutions, it is possible to
formulate existence and nonexistence results for the problem
\begin{equation}
\label{prob-neg}
\begin{cases}
\,\dvg(a(u)Du)=\frac{a'(u)}{2}|Du|^2+f(u)   & \text{in $\Omega$,} \\
\noalign{\vskip2pt}
\,\text{$u(x)\to-\infty$\quad as ${\rm d}(x,\partial\Omega)\to 0$,}    &
\end{cases}
\end{equation}
by assuming that $a$ is an even function, that there exists $r\in\R$ such that $f(r)<0$ and $f(s)\leq 0$ for all $s<r$ and by
prescribing suitable asymptotic conditions on $a$ and $f$ as $s\to-\infty$. Furthermore, by arguing
as in Remark~\ref{signedsol}, it is readily seen that if 
$f(s)=0$ for 
every $s\geq 0$, any solution to \eqref{prob-neg} is nonpositive. See Remark~\ref{rns} in 
Section~\ref{existencesection} for more details on how to detect solutions to~\eqref{prob-neg}
when $a$ is even by reducing the problem to a related one with positive blow-up.
\end{remark}

\begin{remark}[Lower bound of large solutions]\rm
Under assumption \eqref{u1} and \eqref{u2} problem \eqref{prob} has a unique positive solution $u$ in 
$\Omega$ and it is possible to estimate the minimum of 
$u$ in $\Omega$ in terms of the minimum of the unique radial solution $z$ of \eqref{prob} 
in a ball $B$ such that $|B|=|\Omega|$, yielding
\begin{equation}
	\label{r3}
\min_{x\in \Omega}u(x)\geq \min_{x\in B}z(x).
\end{equation}
To prove this, let us consider the associated semi-linear problem \eqref{prob-semi}. 
Quoting a result of \cite{GP} (see Theorem 3.1), we get
$$
\min_{x\in \Omega}v(x)\geq \min_{x\in B} z_\sharp(x)
$$
where $z_\sharp(x)$ is the unique solution of \eqref{prob-semi} in a ball $B$ such that $|B|=|\Omega|$. 
The monotonicity of $g$ then yields \eqref{r3} via Lemma \ref{colleg}.
Moreover, in some cases, the unique radial solution $z_\sharp$ of \eqref{prob-semi} 
in a ball $B$ is explicitly known and this provides an estimate on the minimum 
of $z_\sharp$ (hence of $z$) in terms of $|\Omega|$. 
\end{remark}

\begin{remark}[Convexity of sublevel sets in strictly convex domains]\rm
Assume the domain $\Omega$ is strictly convex and that for a solution $u$ to \eqref{prob} we have: 
\begin{equation}
	\label{ipot-convex}
u>0,\quad  \text{$ 2 f'(s)a(s)-f(s)a'(s)  >0$  for $s>0$}
\quad \text{$s\mapsto \frac{\sqrt{a(g(s))}}{f(g(s))}$\,\,\, is convex on $(0,+\infty)$.}
\end{equation}
Then $g^{-1}(u)$ is strictly convex for $N=2$. The same occurs in higher dimensions provided that 
the Gauss curvature of $\de \Omega$ is strictly positive (see \cite{thorpe}, for example, for the 
definition of Gauss curvature of a surface). In these cases, furthermore, 
the sublevel sets of $u$ are strictly convex. In fact,
we have $u=g(v)$, for some $v\in C^2(\Omega)$, $v>0$, which satisfies $\Delta v=h(v)$, see Lemma~\ref{colleg},
where $h$ is defined as in \eqref{prob-semi}. Furthermore, by \eqref{ipot-convex}, we have that $h>0$, $h$ strictly increasing and  $1/h$  convex.
Let us consider the Concavity function
$$C(v,x,y):=v\big(\frac{x+y}2\big)-\frac 12 v(x)-\frac 12v(y)$$
defined in $\Omega\times \Omega$ 
as introduced in \cite{korevaar} to study the convexity of the level sets of solutions of some semi-linear equations.
We are then in position to apply a result \cite[Theorem 3.13]{kawohl}, by Kawohl, which implies that the Concavity function cannot attain a positive maximum
in $\Omega\times \Omega$. Moreover by \cite[Lemma 3.11]{kawohl}, the Concavity function is negative in a neighborhood of $\de(\Omega\times \Omega)$ so that 
$C(v,x,y)\leq 0$ in $ \Omega\times \Omega$ and hence $v$ is convex in $\Omega$.
If $N=2$, from a result of Caffarelli and Friedman \cite[Corollary 1.3]{caffri}, 
$v$ needs to be strictly convex in $\Omega$. In higher dimensions, assuming that the Gauss 
curvature of $\Omega$ is strictly positive, it is possible to find some points close to
the boundary $\partial\Omega$ where the Hessian matrix of $v$ has full rank. Then, from a result due to Korevaar and Lewis  \cite[Theorem 1]{korv} we would get
that $v$ is strictly convex. In these cases, from the strict monotonicity of $g$, we get that the (closed) sublevel sets of $u$ are strictly convex.
\end{remark}

\begin{remark}[A case of uniqueness]\rm
	\label{remuniexx}
Assume that $p>k+1$ and
\begin{equation}
	\label{expotpot}
f(s)=
\begin{cases}
s^p & \text{if $s\geq 0$},  \\
0 & \text{if $s<0$},
\end{cases}
\,\,\,\qquad
a(s)=1+|s|^k,\,\,\,\,\,\, s\in\R.
\end{equation}
Then condition~\eqref{u1} is satisfied. Moreover, using Lemma \ref{l2.3} with $a_{\infty}=1$, we have
\begin{align*}
\lim_{s\to +\infty}&\frac{\big[ 2f'(s)a(s)-f(s)a'(s)\big]g^{-1}(s)}{2 (a(s))^{\frac 32}f(s)}\\
=&\lim_{s\to +\infty}\frac{2ps^{p-1}(1+s^k)-s^p(ks^{k-1})}{2(1+s^k)^{\frac 32}s^p}\frac{g^{-1}(s)}{s^{\frac {k+2}2}}s^{\frac {k+2}2}\\
=&\lim_{s\to +\infty}\frac{(2p-k)s^{p-k-1}(1+o(1)) s^{\frac {k+2}2}}{2s^{p+\frac 32 k}(1+o(1))}\frac 1{g_{\infty}^{\frac{k+2}2}}\\
=&\frac {(2p-k)}{2}\frac 2{k+2}=\frac {2p-k}{k+2}>1
\end{align*}
since $p>k+1$, so that \eqref{u2} is fulfilled.
In particular 
\begin{equation*}
\begin{cases}
\,\dvg((1+u^k)Du)-\frac{k}{2}u^{k-1}|Du|^2=u^p   & \text{in $\Omega$,} \\
\noalign{\vskip2pt}
\,\text{$u(x)\to+\infty$\quad as ${\rm d}(x,\partial\Omega)\to 0$,}    
\end{cases}
\end{equation*}
admits a unique nonnegative solution in every bounded smooth domain $\Omega$.
\end{remark}

\begin{remark}\rm
Let us observe that, under the existence conditions of Theorem~\eqref{main-ex}, 
condition~\eqref{u1b} is always satisfied. This is proved, for instance, in Section~\ref{se-simm} where we assume that $a$ and $g$ are of class $C^2$. 
Then, if $\Omega$ is of class $C^3$ and has positive mean curvature on $\partial\Omega$ 
the solution of~\eqref{prob} is unique if the function $h$ is nondecreasing.  
\end{remark}

\begin{remark}\rm
Various results appeared in the recent literature about existence and qualitative properties of large solutions for
the $m$-Laplacian equation $\Delta_m u=f(u)$ with $m>1$ on a smooth bounded $\Omega$, see for example \cite{GP}.
On this basis, using a suitable modification of the change of variable
Cauchy problem \eqref{cauchy} and of the Keller-Ossermann condition \eqref{generalKO}, many of the properties stated in our results might be 
extended to cover the study of blow-up solutions of
\begin{equation*}
\,\dvg(a(u)|Du|^{m-2}Du)=\frac{a'(u)}{m}|Du|^m+f(u)   \quad\,\, \text{in $\Omega$.} 
\end{equation*}
\end{remark}

\section{Existence of solutions}
\label{existencesection}

\noindent
Assume that $a:\R\to\R$ is a function of class $C^1$ such that there exists $\nu>0$ with $a(s)\geq \nu$,
	for every $s\in\R$.
The function $g:\R\to\R$, defined in \eqref{cauchy}, is smooth and strictly increasing.
Then, it is possible to associate to problem~\eqref{prob} the semi-linear problem
\begin{equation}
\label{prob-semi}
\begin{cases}
\,\Delta v=h(v)  \quad\text{in $\Omega$,}    &  \\
\noalign{\vskip2pt}
\,\text{$v(x)\to+\infty$\quad as ${\rm d}(x,\partial\Omega)\to 0$,}    &
\end{cases}
\end{equation}
where we let $h(s):=\frac{f(g(s))}{\sqrt{a(g(s))}} $.
\vskip3pt
\noindent
More precisely, we have the following
\begin{lemma}
	\label{colleg}
If $v\in C^2(\Omega)$ is a classical solution to problem~\eqref{prob-semi},
	then $u=g(v)$ is a classical solution to problem~\eqref{prob}. Vice versa, if $u\in C^2(\Omega)$ is a classical
	solution to problem~\eqref{prob}, then $v=g^{-1}(u)$ is a classical solution to problem~\eqref{prob-semi}.
\end{lemma}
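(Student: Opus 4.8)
The plan is to show that the substitution $u=g(v)$ (equivalently $v=g^{-1}(u)$) is a pointwise bijection between the solution sets of \eqref{prob} and \eqref{prob-semi}, built entirely on the defining ODE \eqref{cauchy}. First I would record the elementary properties of $g$: since $a$ is $C^1$ and $a\geq \nu>0$, the Cauchy problem \eqref{cauchy} has a global $C^2$ solution, because $0<g'(s)=(a\circ g(s))^{-1/2}\leq \nu^{-1/2}$, so $g$ is increasing with at most linear growth; and if $g$ were bounded above (resp.\ below) the right-hand side would remain bounded away from $0$, contradicting boundedness, so $g(s)\to\pm\infty$ as $s\to\pm\infty$. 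Hence $g:\R\to\R$ is an increasing $C^2$ diffeomorphism and $g^{-1}$ is $C^2$ as well.

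The core of the argument is a pointwise chain-rule identity valid for every $v\in C^2(\Omega)$. Setting $u:=g(v)$, from $Du=g'(v)Dv=(a\circ g(v))^{-1/2}Dv$ one gets $a(u)Du=\sqrt{a(g(v))}\,Dv$, hence
\[
\dvg(a(u)Du)=\sqrt{a(g(v))}\,\Delta v+D\big(\sqrt{a(g(v))}\big)\cdot Dv .
\]
Since $D\sqrt{a(g(v))}=\tfrac{a'(g(v))}{2a(g(v))}Dv$ and $|Dv|^2=a(g(v))|Du|^2=a(u)|Du|^2$, the second term equals $\tfrac{a'(u)}{2}|Du|^2$, while $\sqrt{a(g(v))}=\sqrt{a(u)}$ and $f(u)=f(g(v))=\sqrt{a(g(v))}\,h(v)$. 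Therefore
\[
\dvg(a(u)Du)-\tfrac{a'(u)}{2}|Du|^2-f(u)=\sqrt{a(g(v))}\,\big(\Delta v-h(v)\big),
\]
and because $\sqrt{a(g(v))}>0$ in $\Omega$, the left-hand side vanishes identically in $\Omega$ if and only if $\Delta v=h(v)$ in $\Omega$.

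Finally I would match regularity and boundary behaviour. If $v\in C^2(\Omega)$ solves \eqref{prob-semi}, then $u=g(v)\in C^2(\Omega)$ as a composition of $C^2$ maps, it solves the PDE in \eqref{prob} by the identity above, and since $g$ is increasing with $g(s)\to+\infty$ as $s\to+\infty$, the condition $v(x)\to+\infty$ forces $u(x)\to+\infty$ as ${\rm d}(x,\partial\Omega)\to0$. Conversely, if $u\in C^2(\Omega)$ solves \eqref{prob}, then $v=g^{-1}(u)\in C^2(\Omega)$, the same identity yields $\Delta v=h(v)$ in $\Omega$, and $g^{-1}$ increasing with $g^{-1}(s)\to+\infty$ as $s\to+\infty$ gives the blow-up of $v$ on $\partial\Omega$. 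There is no genuine obstacle: the only points requiring a little care are the global solvability and surjectivity of \eqref{cauchy} and the algebraic bookkeeping in the chain-rule identity; everything else is formal.
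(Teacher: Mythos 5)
Your proposal is correct and follows essentially the same route as the paper: the change of variable $u=g(v)$ driven by the ODE $g'=(a\circ g)^{-1/2}$, a chain-rule computation identifying the quasi-linear operator with $\sqrt{a(g(v))}\,(\Delta v-h(v))$, and the strict monotonicity and surjectivity of $g$ (limits $g(s)\to\pm\infty$, $g^{-1}(s)\to\pm\infty$) to transfer the boundary blow-up. The only difference is cosmetic: you spell out the global solvability and surjectivity of \eqref{cauchy} and write the identity in the $u$-variables, whereas the paper computes $Dv=a^{1/2}(u)Du$ and $\Delta v$ directly; the substance is the same.
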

\begin{proof}
Observe first that the solution $g$ to the Cauchy problem~\eqref{cauchy} 
is globally defined, of class $C^2$, strictly increasing (thus invertible with inverse $g^{-1}$) and
\begin{equation}
	\label{limiti1}
\lim_{s\to \pm\infty} g(s)=\pm\infty, \,\,\quad
\lim_{s\to \pm\infty} g^{-1}(s)=\pm\infty.
\end{equation}
Now, if $v\in C^2(\Omega)$ is a classical solution to~\eqref{prob-semi}, from the regularity of $g$ it is $u=g(v)\in C^2(\Omega)$.
Furthermore, from~\eqref{limiti1}, it is $u(x)\to+\infty$ as ${\rm d}(x,\partial\Omega)\to 0$. In addition $Dv=a^{1/2}(u)Du$
and $\Delta v=a^{1/2}(u)\Delta u+\frac{1}{2}a'(u)a^{-1/2}(u)|Du|^2$, so that from \eqref{prob-semi} it is
$a(u)\Delta u+\frac{1}{2}a'(u)|Du|^2-f(u)=0$ which easily yields the assertion. Vice versa, if $u$ is a classical solution to~\eqref{prob},
then $v=g^{-1}(u)$ is of class $C^2$ and by \eqref{limiti1}, it is $v(x)\to+\infty$ as ${\rm d}(x,\partial\Omega)\to 0$. Moreover,
it follows that $\Delta v=a^{1/2}(u)\Delta u+\frac{1}{2}a'(u)a^{-1/2}(u)|Du|^2=f(u)a^{-1/2}(u)=f(g(v))a^{-1/2}(g(v))$, concluding the proof.
\end{proof}

\noindent
On the basis of Lemma~\ref{colleg}, we try to establish existence of solutions of problem~\eqref{prob} using existence
condition known in the literature for semi-linear problems like \eqref{prob-semi}. 
\vskip5pt
\noindent
Hereafter we let  $f:\R\to\R$ be a $C^1$ function. Consider the following condition:
\vskip2pt
\noindent
{\bf E.} There exists $r\in\R$ such that $f(r)>0$ and $f(s)\geq 0$ for all $s>r$ and
\begin{equation}
\label{generalKO}
\int_{g^{-1}(r)}^{+\infty}\frac{1}{\sqrt{F\circ g}}<+\infty,\qquad\,\,
F(t):=\int_{r}^t f(\tau)d\tau.
\end{equation}
Essentially, {\bf E} depends upon the asymptotic behavior
of the function $F$ and $g$. 

\begin{proposition}
	\label{exprop-gen}
Let $\Omega$ be any smooth bounded domain in $\R^N$. Then problem~\eqref{prob} admits a solution
if and only if {\bf E} holds. 
\end{proposition}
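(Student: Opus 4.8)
The plan is to reduce, by Lemma~\ref{colleg}, the solvability of~\eqref{prob} to that of the semi-linear blow-up problem~\eqref{prob-semi} with $h(s)=f(g(s))/\sqrt{a(g(s))}$, to invoke the classical Keller--Osserman characterization of solvability for $\Delta v=h(v)$ with blow-up boundary data, and finally to show that this characterization becomes exactly condition~\textbf{E} after the change of variable $g$. By Lemma~\ref{colleg} the map $v\mapsto g(v)$ is a bijection between the classical solution sets of~\eqref{prob-semi} and of~\eqref{prob}, so it suffices to decide for which $a,f$ the problem~\eqref{prob-semi} admits a solution in an arbitrary smooth bounded domain $\Omega$.

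For a continuous nonlinearity $h$, it is classical (Keller~\cite{keller}, Osserman~\cite{osserman}, and the refinements in~\cite{BM,marcvero} and the references therein) that $\Delta v=h(v)$ in $\Omega$ with $v(x)\to+\infty$ as ${\rm d}(x,\partial\Omega)\to0$ has a solution in every smooth bounded domain if and only if there exists $s_0\in\R$ with $h(s_0)>0$, $h\ge0$ on $(s_0,+\infty)$, and
\[
\int_{s_0}^{+\infty}\frac{ds}{\sqrt{\mathcal H(s)}}<+\infty,\qquad \mathcal H(s):=\int_{s_0}^{s}h(\tau)\,d\tau .
\]
Since by~\eqref{limiti1} the function $g$ is a strictly increasing $C^2$ diffeomorphism of $\R$ and $\sqrt{a(g(s))}>0$, the sign of $h(s)$ coincides with that of $f(g(s))$; hence, setting $r:=g(s_0)$ (equivalently $s_0=g^{-1}(r)$), the requirement ``$h(s_0)>0$, $h\ge0$ on $(s_0,+\infty)$'' is equivalent to ``$f(r)>0$, $f\ge0$ on $(r,+\infty)$'', i.e.\ to the sign part of~\textbf{E}.

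It remains to identify the integral condition. From~\eqref{cauchy} one has $g'(s)=1/\sqrt{a(g(s))}$, so $h(s)=f(g(s))\,g'(s)$, and the substitution $t=g(\tau)$ gives
\[
\mathcal H(s)=\int_{g^{-1}(r)}^{s}f(g(\tau))\,g'(\tau)\,d\tau=\int_{r}^{g(s)}f(t)\,dt=F(g(s)).
\]
Therefore the Keller--Osserman integral for $h$ is precisely $\int_{g^{-1}(r)}^{+\infty}1/\sqrt{F\circ g}$, the integral appearing in~\eqref{generalKO}. Combining the last two paragraphs shows that~\eqref{prob-semi} is solvable for every smooth bounded $\Omega$ if and only if~\textbf{E} holds, and by the first paragraph the same is then true for~\eqref{prob}.

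The substance of the argument lies in the semi-linear characterization itself, whether cited or reproved; within it, the genuinely delicate direction is sufficiency, which is where the Keller--Osserman condition is actually used: one solves the Dirichlet problems $\Delta v_n=h(v_n)$ in $\Omega$ with $v_n=n$ on $\partial\Omega$, then establishes an a priori bound on $v_n$ on every compact subset of $\Omega$, uniform in $n$, by comparison on small balls with a radial supersolution blowing up on the sphere — a barrier built from a nondecreasing minorant of $h$ produced from the monotone primitive $F$, which still obeys the Keller--Osserman condition — and finally passes to the limit via interior elliptic estimates. The necessity is the softer direction, following from the Osserman-type pointwise estimates on solutions of $\Delta v\ge h(v)$, which also account for the sign requirement on $f$.
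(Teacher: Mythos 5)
Your overall route is the same as the paper's: reduce to the semi-linear problem \eqref{prob-semi} via Lemma~\ref{colleg}, invoke a Keller--Osserman-type characterization for $\Delta v=h(v)$ with $h=f\circ g/\sqrt{a\circ g}$, and translate that characterization into condition \textbf{E} by the substitution $t=g(\tau)$ (your identity $\int_{g^{-1}(r)}^{s}h=F(g(s))$, using $h=(f\circ g)\,g'$, is correct and is exactly why \textbf{E} is stated in terms of $F\circ g$). The gap is in the semi-linear ingredient you lean on. Condition \textbf{E} only requires $f(r)>0$ and $f\ge 0$ on $(r,+\infty)$, so $h$ need not be nondecreasing (and even a monotone $f$ can produce a non-monotone $h$ through the factor $a^{-1/2}\circ g$). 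The ``classical'' results you cite (Keller \cite{keller}, Osserman \cite{osserman}, Bandle--Marcus \cite{BM}, Marcus--V\'eron \cite{marcvero}) all assume monotonicity of the nonlinearity, which is what makes the comparison arguments in your sketch legitimate; without it, the comparison with a radial blow-up barrier and the Osserman-type upper bound are not available as stated.

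Your proposed repair --- a barrier built from ``a nondecreasing minorant of $h$ produced from the monotone primitive $F$, which still obeys the Keller--Osserman condition'' --- does not work in general: the natural nondecreasing minorant $\underline h(s)=\inf_{t\ge s}h(t)$ is identically zero whenever $f$ (hence $h$) has zeros along a sequence tending to $+\infty$, which \textbf{E} permits, and producing a monotone comparison nonlinearity that both controls $h$ and retains the Keller--Osserman integrability is precisely the delicate content of the result the paper actually uses, namely \cite[Theorem 1.3]{ddgr}, where the equivalence between the Keller--Osserman condition and existence of boundary blow-up solutions in arbitrary smooth bounded domains is proved \emph{without any monotonicity assumption} (this covers both directions; the necessity direction also uses monotonicity in the classical arguments). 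If you replace your ``classical'' citation and the barrier sketch by an appeal to \cite[Theorem 1.3]{ddgr}, your argument becomes exactly the paper's proof; as written, the key step is mis-attributed and its sketched justification would fail for non-monotone $h$.
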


\noindent
Proposition~\ref{exprop-gen} readily follows by combining Lemma~\ref{colleg} with the assertion of \cite[Theorem 1.3]{ddgr}, 
where the authors proved the equivalence between the Keller-Osserman
condition, the sharpened Keller-Osserman condition and the existence of blow-up solutions in arbitrary bounded domains
without requiring any monotonicity assumption on the nonlinearity.
\vskip4pt
\noindent
We shall now investigate the asymptotic behavior of $g$ as 
$s\to+\infty$ according to the cases when $a$ behaves like a polynomial, an
exponential function or a logarithmic function. In turn, in these situation, 
we discuss the validity of condition~\eqref{generalKO}.

\subsection{Polynomial growth}
Assume that there exists $a_\infty>0$ such that
\begin{equation}
	\label{1.7}
\lim_{s\to +\infty}\frac{a(s)} {s^k}=a_\infty.
\end{equation}
\noindent
In \cite[Lemma 2.1]{glasqu1}, we proved the following

\begin{lemma}\label{l2.3}
Assume that condition \eqref{1.7} holds. Then, we have
\begin{equation}\label{1.8}
\lim_{s\to +\infty}\frac{g(s)}{s^{\frac 2{k+2}}}=g_{\infty}, \qquad
\lim_{t\to +\infty}\frac {g^{-1}(t)}{t^{\frac {k+2}2}} =g_{\infty}^{-\frac {k+2}2},
\end{equation}
where
$g_{\infty}= \big(\frac{k+2} 2 \frac 1{\sqrt a_\infty} \big)^{\frac 2{k+2}}$.
\end{lemma}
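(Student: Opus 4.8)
The plan is to analyze the Cauchy problem \eqref{cauchy} directly. Since $g' = 1/\sqrt{a\circ g}$ with $g(0)=0$ and $a\geq\nu>0$, the function $g$ is globally defined, $C^2$, strictly increasing, and $g(s)\to+\infty$ as $s\to+\infty$. The natural first step is to separate variables: writing $t = g(s)$, we have
\[
s = \int_0^{g(s)} \sqrt{a(\tau)}\,d\tau,
\]
so that $G(t):=\int_0^t\sqrt{a(\tau)}\,d\tau$ is exactly the inverse function $g^{-1}$. Thus the whole lemma reduces to understanding the asymptotics of $G(t)$ as $t\to+\infty$.

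The key step is then an elementary asymptotic estimate for the integral $G(t)=\int_0^t\sqrt{a(\tau)}\,d\tau$. From \eqref{1.7} we have $\sqrt{a(\tau)} = \sqrt{a_\infty}\,\tau^{k/2}(1+o(1))$ as $\tau\to+\infty$; integrating (splitting off a fixed bounded piece $\int_0^{T_0}$ and applying the $\eps$-definition of the limit on $[T_0,t]$, then using that $\int_{T_0}^t \tau^{k/2}d\tau \to \infty$ dominates the error) yields
\[
G(t) = \frac{\sqrt{a_\infty}}{\tfrac k2+1}\, t^{\frac k2+1}(1+o(1)) = \frac{2\sqrt{a_\infty}}{k+2}\, t^{\frac{k+2}{2}}(1+o(1)),
\qquad t\to+\infty.
\]
Since $G=g^{-1}$, this gives $\lim_{t\to+\infty} g^{-1}(t)/t^{(k+2)/2} = \tfrac{2}{k+2}\sqrt{a_\infty} = g_\infty^{-(k+2)/2}$ with $g_\infty = \big(\tfrac{k+2}{2}\tfrac{1}{\sqrt{a_\infty}}\big)^{2/(k+2)}$, which is the second limit in \eqref{1.8}.

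Finally I would deduce the first limit in \eqref{1.8} by inverting. Set $t = g(s)$; since $g$ is an increasing bijection of $\R$ onto $\R$ and $g(s)\to+\infty$ as $s\to+\infty$, the relation $s = g^{-1}(t) = \tfrac{2\sqrt{a_\infty}}{k+2} t^{(k+2)/2}(1+o(1))$ can be solved for $t$: raising to the power $2/(k+2)$ gives $t = g(s) = \big(\tfrac{k+2}{2\sqrt{a_\infty}}\big)^{2/(k+2)} s^{2/(k+2)}(1+o(1)) = g_\infty\, s^{2/(k+2)}(1+o(1))$, as claimed. (One should check the $(1+o(1))$ factor survives the power operation, which is immediate since $x\mapsto x^{2/(k+2)}$ is continuous at $1$ and the argument tends to $1$.)

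The only mildly delicate point is the passage from the pointwise asymptotics of $\sqrt{a(\tau)}$ to the asymptotics of its integral $G(t)$; this is routine but must be done carefully because $a$ is only controlled at infinity, not globally — hence the splitting of the integral at a fixed large $T_0$ and the observation that the contribution of $[0,T_0]$, being a fixed constant, is absorbed into the $o(t^{(k+2)/2})$ error. Since this lemma is quoted from \cite[Lemma 2.1]{glasqu1}, the proof here can simply record these steps.
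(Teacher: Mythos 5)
Your argument is correct. Note that the paper does not reproduce a proof of Lemma~\ref{l2.3} at all: it is quoted from \cite[Lemma 2.1]{glasqu1}. However, the closely analogous Lemmas~\ref{growl} and~\ref{growlog} in this paper are proved starting from exactly your identity $s=\int_0^{g(s)}a^{1/2}(\sigma)\,d\sigma$ (i.e.\ $g^{-1}(t)=\int_0^t a^{1/2}(\tau)\,d\tau$), but they then extract the asymptotics by l'H\^opital applied to quotients such as $g^{1+\eps}(s)/\int_0^{g(s)}a^{1/2}$, rather than by your direct route of estimating the integral through an $\eps$-splitting at a fixed $T_0$ and then inverting the relation $t=g(s)$. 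The two approaches are essentially equivalent in substance: l'H\^opital hides the splitting argument inside a standard theorem and delivers the limit for $g(s)/s^{2/(k+2)}$ in one stroke, while your version is more elementary and self-contained, obtains the limit for $g^{-1}$ first and recovers the one for $g$ by continuity of $x\mapsto x^{2/(k+2)}$ at $1$, which you correctly flag. Your numerical constants check out: $g_\infty^{-(k+2)/2}=2\sqrt{a_\infty}/(k+2)$ matches the coefficient of $t^{(k+2)/2}$ in your expansion of $g^{-1}$. The only ingredients you use without detailed proof --- global definition of $g$, strict monotonicity, and $g(s)\to+\infty$ as $s\to+\infty$ (needed so that $t=g(s)\to+\infty$ in the inversion step) --- are exactly the facts recorded in \eqref{limiti1} in the proof of Lemma~\ref{colleg}, so nothing is missing.
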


\noindent
We can now formulate the following existence result.

\begin{proposition}[$f$ with exponential growth]
	\label{potexp}
Assume that~\eqref{1.7} holds and that there exist $\beta>0$ and $f_{\infty} >0$ such that
\begin{equation}
	\label{fassexppot}
\lim_{s\to +\infty}\frac{f(s)} {e^{2\beta s}}=f_{\infty}.
\end{equation}
Then~\eqref{prob} always admits a solution in any smooth domain $\Omega$.
\end{proposition}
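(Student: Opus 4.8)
The plan is to reduce Proposition~\ref{potexp} to the verification of the existence condition {\bf E} for the associated semi-linear problem~\eqref{prob-semi}, invoking Proposition~\ref{exprop-gen}. Since $f(s)\to+\infty$ as $s\to+\infty$ (by~\eqref{fassexppot}), there clearly exists $r\in\R$ with $f(r)>0$ and $f(s)\geq 0$ for all $s>r$; so the only substantive point is the integrability condition
$$
\int_{g^{-1}(r)}^{+\infty}\frac{ds}{\sqrt{F\circ g(s)}}<+\infty,\qquad F(t)=\int_r^t f(\tau)\,d\tau.
$$
First I would pin down the asymptotics of $F$: from~\eqref{fassexppot}, $f(\tau)=f_\infty e^{2\beta\tau}(1+o(1))$, so integrating gives $F(t)=\frac{f_\infty}{2\beta}e^{2\beta t}(1+o(1))$ as $t\to+\infty$, hence $\sqrt{F(t)}\sim \sqrt{f_\infty/(2\beta)}\,e^{\beta t}$.

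Next I would feed in the behavior of $g$ supplied by Lemma~\ref{l2.3}: under~\eqref{1.7} we have $g(s)\sim g_\infty s^{2/(k+2)}$ as $s\to+\infty$, with $g_\infty=\big(\tfrac{k+2}{2}\tfrac{1}{\sqrt{a_\infty}}\big)^{2/(k+2)}$. Composing, $\sqrt{F\circ g(s)}$ grows like $e^{\beta g_\infty s^{2/(k+2)}}$ up to a positive constant and lower-order corrections, i.e.\ like the exponential of a positive power $s^{2/(k+2)}$ of $s$ (note $0<2/(k+2)<1$ since $k>0$, but that is irrelevant for convergence). Therefore $1/\sqrt{F\circ g(s)}$ decays faster than any negative power of $s$, and in particular there is $C>0$ and $s_0$ with
$$
\frac{1}{\sqrt{F\circ g(s)}}\leq C\,e^{-\frac{\beta g_\infty}{2} s^{2/(k+2)}}\qquad\text{for }s\geq s_0,
$$
whose integral over $(s_0,+\infty)$ is finite (a change of variable $t=s^{2/(k+2)}$ turns it into $\int^\infty t^{(k+2)/2-1}e^{-ct}\,dt<\infty$). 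Splitting $\int_{g^{-1}(r)}^{+\infty}=\int_{g^{-1}(r)}^{s_0}+\int_{s_0}^{+\infty}$, the first integral is finite because the integrand is continuous and positive on a compact interval (recall $g^{-1}(r)$ is finite and $F\circ g>0$ near there after possibly enlarging $r$), and the second is finite by the bound above. Hence {\bf E} holds.

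The conclusion then follows immediately: by Proposition~\ref{exprop-gen}, problem~\eqref{prob} admits a solution in every smooth bounded domain $\Omega$. I do not expect a genuine obstacle here; the only mild care needed is the bookkeeping around the constant $r$ (ensuring $f>0$ and hence $F>0$ to the right of $g^{-1}(r)$, so that the integrand is well defined and continuous on the compact part), and being slightly careful that the $o(1)$ corrections in the asymptotics of $a$, $f$, and $g$ do not affect the exponential rate — they only change the constant in front of $s^{2/(k+2)}$ in the exponent, which is harmless. If one prefers to avoid even naming $r$, one may equally check the sharpened Keller--Osserman form of {\bf E} directly; the estimate is the same.
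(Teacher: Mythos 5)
Your argument is correct and follows essentially the same route as the paper: reduce to condition {\bf E} via Proposition~\ref{exprop-gen}, use Lemma~\ref{l2.3} to get $g(s)\sim g_\infty s^{2/(k+2)}$, deduce a lower bound of the form $\sqrt{F\circ g(s)}\geq e^{c\,s^{2/(k+2)}}$ for large $s$, and split the integral into a compact part plus a convergent tail. One tiny caveat: at the lower endpoint $s=g^{-1}(r)$ the integrand is not continuous (since $F(g(s))$ vanishes there), but because $f(r)>0$ the singularity is of order $(s-g^{-1}(r))^{-1/2}$ and hence integrable, which is exactly the implicit treatment in the paper's own proof.
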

\begin{proof}
	In light of assumption~\eqref{fassexppot}, there exists $r>0$ such that $f(t)>0$ for all $t\geq r$. 
        Since
	$$
	\lim_{s\to+\infty}\frac{F(s)}{e^{2\beta s}}=\frac{f_{\infty}}{2\beta},
	$$
	on account of Lemma~\ref{l2.3}, for any $\eps>0$ we obtain
	\begin{align*}
	\lim_{s\to+\infty}\frac{ F(g(s))}{e^{(2\beta g_{\infty}-2\eps) s^{\frac{2}{k+2}}}}&=	
	\lim_{s\to+\infty}\frac{f_{\infty} e^{2\beta g_{\infty}s^{\frac{2}{2+k}}(1+o(1))}(1+o(1))}{2\beta e^{(2\beta g_{\infty} -2\eps)s^{\frac{2}{k+2}}}} \\
	&=\frac {f_{\infty}}{2\beta}\lim_{s\to+\infty} e^{2 s^{\frac{2}{2+k}}(\eps+o(1))}(1+o(1))=+\infty.	
	\end{align*}
	In particular, having fixed $\eps$ such that $\eps<\beta g_{\infty}$, there exists $R=R(\eps)>0$ such that
	$$
	\sqrt{F(g(s))}\geq e^{(\beta g_{\infty}-\eps) s^{\frac{2}{k+2}}},  \,\,\,\quad\text{for every $s\geq R$.}
	$$
Therefore,
$$
\int_{g^{-1}(r)}^{+\infty}\frac{1}{\sqrt{F(g(s))}}ds\leq 
\int_{g^{-1}(r)}^{R}\frac{1}{\sqrt{F(g(s))}}ds
+  \int_{R}^{+\infty}\frac{1}{e^{(\beta g_{\infty}-\eps) s^{2/(k+2)}}}ds<+\infty.
$$
The assertion then follows by Proposition~\ref{exprop-gen}.
\end{proof}

\noindent
Next, we have the following

\begin{proposition}[$f$ with polynomial growth]
	\label{potpot}
Assume that~\eqref{1.7} holds and that there exist $p>1$ and $f_{\infty}>0$ such that
\begin{equation}
	\label{fasspotpot}
\lim_{s\to +\infty}\frac{f(s)} {s^p}= f_{\infty}.
\end{equation}
Then~\eqref{prob} has admits a solution in any smooth domain $\Omega$ if and only if $p>k+1$.
\end{proposition}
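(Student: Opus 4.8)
The plan is to reduce the quasi-linear problem to the semi-linear one via Lemma~\ref{colleg}, and then to check condition~\textbf{E} of Proposition~\ref{exprop-gen}, i.e.\ the validity of~\eqref{generalKO}, exactly as done in the proof of Proposition~\ref{potexp}, but now with $f$ of polynomial type. First I would note that under~\eqref{fasspotpot} there exists $r>0$ with $f(t)>0$ for $t\geq r$, so that $F(t)=\int_r^t f$ is well defined, strictly increasing, and satisfies $\lim_{s\to+\infty} F(s)/s^{p+1}=f_\infty/(p+1)$. Composing with $g$ and invoking Lemma~\ref{l2.3}, which gives $g(s)\sim g_\infty s^{2/(k+2)}$ as $s\to+\infty$, one finds
$$
\lim_{s\to+\infty}\frac{F(g(s))}{s^{\frac{2(p+1)}{k+2}}}=\frac{f_\infty}{p+1}\,g_\infty^{p+1}=:c_\infty>0.
$$
Hence $\sqrt{F(g(s))}\sim \sqrt{c_\infty}\, s^{\frac{p+1}{k+2}}$, and the integral $\int_{g^{-1}(r)}^{+\infty}\frac{ds}{\sqrt{F(g(s))}}$ converges near $+\infty$ if and only if the exponent $\frac{p+1}{k+2}>1$, i.e.\ if and only if $p>k+1$. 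The convergent part near $g^{-1}(r)$ is harmless since the integrand is continuous there. This establishes that~\textbf{E} holds precisely when $p>k+1$, and Proposition~\ref{exprop-gen} then gives existence in every smooth bounded $\Omega$ in that range.

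For the nonexistence half when $1<p\leq k+1$, the same computation shows $\sqrt{F\circ g}$ decays or grows no faster than $s^{1}$ (up to a constant), so $\int^{+\infty}\frac{ds}{\sqrt{F\circ g}}=+\infty$, i.e.\ \textbf{E} fails. Since Proposition~\ref{exprop-gen} asserts an \emph{equivalence}—the existence of a blow-up solution in \emph{any} bounded domain is equivalent to~\textbf{E}—the failure of~\eqref{generalKO} forces nonexistence. One subtlety worth a line is that~\textbf{E} requires $f(s)\geq 0$ for $s>r$ and $f(r)>0$, which follows from~\eqref{fasspotpot} with $f_\infty>0$; for values $s\leq r$ the behavior of $f$ (and hence of $a$) is irrelevant, as Proposition~\ref{exprop-gen} makes no monotonicity demand, so no further hypotheses are needed. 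I would also remark that the exponent at which the dichotomy occurs, $p=k+1$, is exactly the one anticipated in Theorem~\ref{main-ex}(2).

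The main (and only real) obstacle is making the asymptotic substitution $F(g(s))\sim c_\infty s^{2(p+1)/(k+2)}$ rigorous: one must combine the two limits from Lemma~\ref{l2.3} and from~\eqref{fasspotpot} carefully, controlling the lower-order terms uniformly, and then split the improper integral into a bounded piece and a tail to which a comparison test applies. This is entirely routine—parallel to the exponential case already treated in Proposition~\ref{potexp}—so no genuine difficulty arises; the proof is a short computation followed by an appeal to Proposition~\ref{exprop-gen}.
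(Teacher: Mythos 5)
Your proposal is correct and follows essentially the same route as the paper: reduce to the semi-linear problem, use Lemma~\ref{l2.3} to get $\sqrt{F\circ g}\sim C s^{(p+1)/(k+2)}$, and test convergence of the Keller--Osserman integral in condition \textbf{E}, concluding via Proposition~\ref{exprop-gen}; your handling of the nonexistence half (divergence for every admissible $r$, since changing $r$ only shifts $F$ by a constant) matches the paper's quantification over all such $r$. No gaps.
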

\begin{proof}
	In light of \eqref{fasspotpot}, there exists $r>0$ such that $f(t)>0$ for all $t\geq r$. 
        Since
	$$
	\lim_{s\to+\infty}\frac{F(s)}{s^{p+1}}=\frac{ f_{\infty}}{p+1},
	$$
	on account of Lemma~\ref{l2.3},	we get
	\begin{align*}
	& \lim_{s\to+\infty}\frac{F(g(s))}{s^{\frac{2(p+1)}{k+2}}}=
	\lim_{s\to+\infty}\frac{ f_{\infty}(g(s))^{p+1}(1+o(1))}{(p+1)s^{\frac{2(p+1)}{k+2}}}  \\
	& =\frac{f_{\infty}}{p+1}\lim_{s\to+\infty}\frac{(g_{\infty}s^{\frac{2}{k+2}}(1+o(1)))^{p+1}(1+o(1))}{s^{\frac{2(p+1)}{k+2}}}=\frac{f_{\infty}g_{\infty}^{p+1}}{p+1}
	\end{align*}
	In particular, there exists $R>0$ such that
	$$
	\frac{\sqrt{ f_{\infty}}g_{\infty}^{\frac{p+1}{2}}}{\sqrt{2p+2}} s^{\frac{p+1}{k+2}}\leq 
	\sqrt{F(g(s))}\leq \frac{\sqrt{2 f_{\infty}}g_{\infty}^{\frac{p+1}{2}}}{\sqrt{p+1}} s^{\frac{p+1}{k+2}}, \,\,\,\quad\text{for every $s\geq R$.}
	$$
	Then, if $p>k+1$, we have
	$$
	\int_{g^{-1}(r)}^{+\infty}\frac{1}{\sqrt{F(g(s))}}ds\leq 
	\int_{g^{-1}(r)}^{R}\frac{1}{\sqrt{F(g(s))}}ds
	+ \frac{\sqrt{2p+2}}{\sqrt{ f_{\infty}}g_{\infty}^{\frac{p+1}{2}}} 
	\int_{R}^{+\infty}
        s^{-\frac{p+1}{k+2}}ds<+\infty,
	$$
	so that {\bf E} holds true. On the contrary, assuming that $p\leq k+1$, 
	for every $r\in\R$ with $f(r)>0$ and $f(t)\geq 0$ for all $t>r$, we obtain 
	$$
	\int_{g^{-1}(r)}^{+\infty}\frac{1}{\sqrt{F(g(s))}}ds\geq 
	 \frac{\sqrt{p+1}}{\sqrt{2 f_{\infty}}g_{\infty}^{\frac{p+1}{2}}} 
	\int_{\max\{R,g^{-1}(r)\}}^{+\infty}
        s^{-\frac{p+1}{k+2}}ds=+\infty.
	$$
	The assertion then follows by Proposition~\ref{exprop-gen}.
\end{proof}

\noindent
We also have the following

\begin{proposition}[$f$ with logarithmic growth]
	\label{pollog}
Assume that~\eqref{1.7} holds and that there exist $\beta>0$ and $f_{\infty}>0$ such that
\begin{equation}
	\label{log}
\lim_{s\to +\infty}\frac{f(s)} {(\log s)^\beta}= f_{\infty}.
\end{equation}
Then, in any smooth domain $\Omega$, problem~\eqref{prob} admits no solution.
\end{proposition}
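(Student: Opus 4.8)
The plan is to invoke Proposition~\ref{exprop-gen}: it suffices to show that condition {\bf E} fails, i.e.\ that for every $r\in\R$ with $f(r)>0$ and $f(s)\geq 0$ for all $s>r$ one has $\int_{g^{-1}(r)}^{+\infty}(F\circ g)^{-1/2}\,ds=+\infty$. Since \eqref{log} forces $f(s)>0$ for all large $s$, at least one such $r$ exists, and the divergence of the integral will only depend on the behaviour of the integrand near $+\infty$, hence not on the particular choice of $r$.

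First I would pin down the asymptotics of $F$. From \eqref{log} together with an integration by parts (or L'H\^opital's rule) one gets $\lim_{s\to+\infty} F(s)/\big(s(\log s)^\beta\big)=f_\infty$. Combining this with Lemma~\ref{l2.3}, namely $g(s)=g_\infty s^{2/(k+2)}(1+o(1))$, and with $\log\big(g_\infty s^{2/(k+2)}\big)=\tfrac{2}{k+2}\log s\,(1+o(1))$, I would obtain
\[
\lim_{s\to+\infty}\frac{F(g(s))}{s^{\frac{2}{k+2}}(\log s)^\beta}= f_\infty\, g_\infty\Big(\frac{2}{k+2}\Big)^{\beta}=:c_0>0 .
\]
Consequently there are $R>0$ and a constant $c_1>0$ with $\sqrt{F(g(s))}\leq c_1\, s^{\frac{1}{k+2}}(\log s)^{\beta/2}$ for all $s\geq R$.

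Then nonexistence follows from the elementary divergence $\int_R^{+\infty} s^{-\frac{1}{k+2}}(\log s)^{-\beta/2}\,ds=+\infty$, which holds because $k>0$ gives $\frac{1}{k+2}<1$, so that $s^{1-\frac{1}{k+2}}=s^{\frac{k+1}{k+2}}$ eventually dominates $(\log s)^{\beta/2}$ and hence $s^{-\frac{1}{k+2}}(\log s)^{-\beta/2}\geq s^{-1}$ for $s$ large. Thus
\[
\int_{g^{-1}(r)}^{+\infty}\frac{ds}{\sqrt{F(g(s))}}\geq \frac{1}{c_1}\int_{\max\{R,\,g^{-1}(r)\}}^{+\infty} \frac{ds}{s^{\frac{1}{k+2}}(\log s)^{\beta/2}}=+\infty ,
\]
so {\bf E} is violated and Proposition~\ref{exprop-gen} yields the claim.

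There is no genuine obstacle here; the only points that require a little care are the asymptotic estimate for $F$ (one must check that the additive constant hidden in the lower limit of integration is irrelevant to leading order, and that $\log g(s)$ contributes only the harmless factor $(\tfrac{2}{k+2})^\beta$) and the observation that the logarithmic weight is too weak to restore convergence of an integral whose algebraic part $s^{-1/(k+2)}$ is already non-integrable at infinity.
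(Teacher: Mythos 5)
Your proof is correct and follows essentially the same route as the paper: show that the Keller--Osserman-type integral in condition {\bf E} diverges, via an upper bound on $\sqrt{F\circ g}$ coming from Lemma~\ref{l2.3}, and conclude by Proposition~\ref{exprop-gen}. The only (cosmetic) difference is that the paper simply dominates $(\log s)^\beta$ by a small power $s^{p_0}$ with $p_0<k+1$ and reuses the divergence computation of Proposition~\ref{potpot}, whereas you keep the logarithmic factor explicitly and check the divergence directly; both are fine.
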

\begin{proof}
	The proof proceeds just like in the proof of Proposition~\ref{potpot} observing
	that for any $k>0$ there exists $p_0<k+1$ such that $\sqrt{F(g(s))}\leq  s^{\frac{p_0+1}{k+2}}$ for
	$s$ large.
\end{proof}
\vskip4pt
\noindent
\subsection{Exponential growth}
Assume now that there exist $\gamma>0$ and  $a_{\infty}>0$ such that
\begin{equation}
	\label{exgrow}
\lim_{s\to +\infty}\frac{a(s)} {e^{2\gamma s}}=a_{\infty}.
\end{equation}

\noindent
Then we have the following

\begin{lemma}\label{growl}
Assume that condition \eqref{exgrow} holds. Then, we have
\begin{equation}\label{exlimm}
\lim_{s\to +\infty}\frac{g(s)}{\log s}=\frac{1}{\gamma},\qquad
\lim_{t\to +\infty}\frac {g^{-1}(t)}{e^{\gamma t}} =\frac{\sqrt{a_{\infty}}}{\gamma}.
\end{equation}
\end{lemma}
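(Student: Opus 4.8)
The plan is to read off both limits from the asymptotics of the single auxiliary function $\phi(s):=e^{\gamma g(s)}$, using that, by \eqref{cauchy}, $g$ is strictly increasing with $g(s)\to+\infty$ as $s\to+\infty$ (this is \eqref{limiti1}). This is the exponential analogue of Lemma~\ref{l2.3}, and I expect it to go through with no genuine obstacle.

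First I would differentiate: from \eqref{cauchy},
$$
\phi'(s)=\gamma g'(s)e^{\gamma g(s)}=\gamma\,\frac{e^{\gamma g(s)}}{\sqrt{a(g(s))}}.
$$
Since $g(s)\to+\infty$, assumption \eqref{exgrow} yields $a(g(s))=a_\infty e^{2\gamma g(s)}(1+o(1))$ as $s\to+\infty$, hence $(a(g(s)))^{-1/2}=a_\infty^{-1/2}e^{-\gamma g(s)}(1+o(1))$ and therefore
$$
\phi'(s)\longrightarrow \frac{\gamma}{\sqrt{a_\infty}},\qquad s\to+\infty.
$$

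Next, the elementary implication ``$\phi'(s)\to L$ implies $\phi(s)/s\to L$'' (L'Hôpital, or a direct $\eps$-argument on $\phi(s)=\phi(0)+\int_0^s\phi'$) gives
$$
\frac{e^{\gamma g(s)}}{s}\longrightarrow \frac{\gamma}{\sqrt{a_\infty}},\qquad s\to+\infty.
$$
Taking logarithms, $\gamma g(s)-\log s\to\log(\gamma/\sqrt{a_\infty})$, so $\gamma g(s)-\log s$ stays bounded; dividing by $\log s$ and letting $s\to+\infty$ gives the first limit in \eqref{exlimm}, $g(s)/\log s\to 1/\gamma$.

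Finally, for the inverse I would read the displayed relation $e^{\gamma g(s)}/s\to\gamma/\sqrt{a_\infty}$ along the substitution $t=g(s)$, i.e.\ $s=g^{-1}(t)$; by monotonicity of $g$ and \eqref{limiti1}, $t\to+\infty$ is equivalent to $s\to+\infty$. This turns the relation into $e^{\gamma t}/g^{-1}(t)\to\gamma/\sqrt{a_\infty}$, that is $g^{-1}(t)/e^{\gamma t}\to\sqrt{a_\infty}/\gamma$, which is the second limit in \eqref{exlimm}. The only step that is not pure bookkeeping is the Cesàro-type implication recalled above, and that is immediate, so no part of the argument is expected to be delicate.
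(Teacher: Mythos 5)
Your proposal is correct and rests on essentially the same computation as the paper: both proofs boil down to showing $e^{\gamma g(s)}/s\to\gamma/\sqrt{a_\infty}$ from $g'=1/\sqrt{a\circ g}$ together with \eqref{exgrow}, and then reading off the two limits in \eqref{exlimm}. The only difference is packaging — you obtain the key asymptotic via the derivative limit of $\phi(s)=e^{\gamma g(s)}$ plus a Ces\`aro-type step and deduce the first limit by taking logarithms, whereas the paper applies l'H\^opital twice using the identity $s=\int_0^{g(s)}a^{1/2}(\sigma)\,d\sigma$ — so no substantive divergence.
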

\begin{proof}
From the definition of $g$, we have $g'(s)a^{1/2}(g(s))=1$. Hence, integrating on $[0,s]$ yields
$$
s=\int_0^s g'(\sigma)a^{1/2}(g(\sigma))d\sigma=\int_0^{g(s)} a^{1/2}(\sigma)d\sigma.
$$
In turn, we reach
\begin{align*}
 \lim_{s\to+\infty}\frac{g(s)}{\frac{1}{\gamma} \log s}&=\gamma\lim_{s\to+\infty} sg'(s)=
\gamma\lim_{s\to+\infty} \frac{s}{\sqrt{a(g(s))}} \\
& =\gamma\lim_{s\to+\infty} \frac{s}{\sqrt{a_{\infty}e^{2\gamma g(s)}(1+o(1))}}=
\frac{\gamma}{\sqrt{a_{\infty}}}\lim_{s\to+\infty} \frac{\int_0^{g(s)} a^{1/2}(\sigma)d\sigma}{e^{\gamma g(s)}} \\
\noalign{\vskip2pt}
&=\frac{\gamma}{\sqrt{a_{\infty}}}\lim_{s\to+\infty} \frac{a^{1/2}(g(s))g'(s)}{\gamma e^{\gamma g(s)}g'(s)} 
=\frac{1}{\sqrt{a_{\infty}}}\lim_{s\to+\infty} \frac{a^{1/2}(g(s))}{e^{\gamma g(s)}}=1, 
\end{align*}
in light of condition \eqref{exgrow}. Furthermore, taking into account the above computations, 
\begin{equation*}
	\lim_{t\to+\infty}\frac{g^{-1}(t)}{e^{\gamma t}}=\lim_{s\to+\infty}\frac{s}{e^{\gamma g(s)}}
	=\frac{\sqrt{a_{\infty}}}{\gamma},
\end{equation*}
concluding the proof.
\end{proof}

\noindent
We can now formulate the following existence result.

\begin{proposition}[$f$ with exponential growth]
	\label{expexp}
Assume that~\eqref{exgrow} holds and that there exist $\beta>0$ and $ f_{\infty}>0$ such that
\begin{equation}
	\label{fassexp}
\lim_{s\to +\infty}\frac{f(s)} {e^{2\beta s}}=f_{\infty}.
\end{equation}
Then~\eqref{prob} admits a solution in any smooth domain $\Omega$ if and only if $\beta>\gamma$.
\end{proposition}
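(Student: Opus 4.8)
The plan is to reduce Proposition~\ref{expexp} to the Keller--Osserman type criterion of Proposition~\ref{exprop-gen}, exactly as in the proofs of Propositions~\ref{potexp} and~\ref{potpot}, the only difference being that now the asymptotics of $g$ are governed by Lemma~\ref{growl} instead of Lemma~\ref{l2.3}. So the whole matter comes down to deciding, in terms of $\beta$ and $\gamma$, whether the integral $\int_{g^{-1}(r)}^{+\infty}(F\circ g)^{-1/2}$ converges or diverges, where $F(t)=\int_r^t f$ and $r>0$ is chosen (using~\eqref{fassexp}) so large that $f>0$ on $[r,+\infty)$.

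First I would record the asymptotics of $F$. From~\eqref{fassexp} one gets $\lim_{s\to+\infty}F(s)e^{-2\beta s}=f_\infty/(2\beta)$, hence $F(s)=\frac{f_\infty}{2\beta}e^{2\beta s}(1+o(1))$ as $s\to+\infty$. Next I would substitute the inner asymptotics of $g$. By the first limit in~\eqref{exlimm}, $g(s)=\frac1\gamma\log s\,(1+o(1))$, so that $e^{2\beta g(s)}=s^{2\beta/\gamma+o(1)}$; more precisely, for every $\eps>0$ there is $R=R(\eps)$ with $e^{2\beta g(s)}\geq s^{2\beta/\gamma-\eps}$ for $s\geq R$, and likewise with the reversed inequality and exponent $2\beta/\gamma+\eps$. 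Combining, for $s\geq R$ large,
\begin{equation*}
c_1\, s^{\beta/\gamma-\eps}\leq \sqrt{F(g(s))}\leq c_2\, s^{\beta/\gamma+\eps},
\end{equation*}
for suitable positive constants $c_1,c_2$ depending on $\eps$. Then the integral $\int_R^{+\infty}(F\circ g)^{-1/2}ds$ is controlled above and below by constant multiples of $\int_R^{+\infty}s^{-\beta/\gamma\pm\eps}ds$. If $\beta>\gamma$, pick $\eps$ with $\beta/\gamma-\eps>1$; then the upper bound gives a convergent integral, so condition {\bf E} holds and Proposition~\ref{exprop-gen} yields a solution. If $\beta\leq\gamma$, pick $\eps$ with $\beta/\gamma+\eps\leq 1$; then the lower bound forces divergence for every admissible choice of $r$, so {\bf E} fails and Proposition~\ref{exprop-gen} gives nonexistence. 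This establishes the "if and only if" with threshold $\beta=\gamma$.

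I do not expect a serious obstacle here; the argument is a direct transcription of the polynomial case with the logarithmic change of variable built into Lemma~\ref{growl}. The one point deserving a little care is the borderline case $\beta=\gamma$: there the leading behaviour of $\sqrt{F(g(s))}$ is exactly of order $s$ up to subexponential corrections, and $\int^{+\infty}s^{-1}ds=+\infty$, so nonexistence still holds; the $o(1)$ in $g(s)=\frac1\gamma\log s(1+o(1))$ only produces a factor $s^{o(1)}$, which does not affect divergence of $\int s^{-1+o(1)}ds$. Writing the two-sided estimate on $\sqrt{F(g(s))}$ once, and then splitting $\int_{g^{-1}(r)}^{+\infty}=\int_{g^{-1}(r)}^{R}+\int_{R}^{+\infty}$ with the first piece finite by continuity and positivity of $F\circ g$ on the compact interval, handles both directions cleanly.
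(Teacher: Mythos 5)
Your overall strategy is exactly the paper's: reduce to Proposition~\ref{exprop-gen}, use Lemma~\ref{growl} to write $e^{2\beta g(s)}=s^{2\beta/\gamma+o(1)}$, and deduce two-sided bounds $c_1 s^{\beta/\gamma-\eps}\leq\sqrt{F(g(s))}\leq c_2 s^{\beta/\gamma+\eps}$ for $s$ large, which settle existence for $\beta>\gamma$ and nonexistence for $\beta<\gamma$ precisely as in the paper's proof. The genuine gap is the borderline case $\beta=\gamma$, which the statement's ``if and only if'' requires. Your main dichotomy does not reach it (when $\beta=\gamma$ there is no $\eps>0$ with $\beta/\gamma+\eps\leq 1$), and the patch you offer is not a valid inference: from the first limit in \eqref{exlimm} alone you only know $\sqrt{F(g(s))}=s^{1+o(1)}$, and divergence of $\int^{+\infty}s^{-1+o(1)}\,ds$ does not follow in general. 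For instance $s^{o(1)}=e^{-\sqrt{\log s}}$ is admissible as a ``factor $s^{o(1)}$'', yet $\int_e^{+\infty}s^{-1}e^{-\sqrt{\log s}}\,ds=\int_1^{+\infty}e^{-\sqrt{t}}\,dt<+\infty$, so the $o(1)$ in the exponent can in principle destroy divergence of a critical integral.

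The conclusion at $\beta=\gamma$ is nevertheless true, and the repair is immediate from the tool you already have: the \emph{second} limit in \eqref{exlimm}, $g^{-1}(t)\sim\frac{\sqrt{a_\infty}}{\gamma}e^{\gamma t}$, equivalently $e^{\gamma g(s)}\sim\frac{\gamma}{\sqrt{a_\infty}}\,s$, upgrades your upper estimate to $\sqrt{F(g(s))}\leq C\,s^{\beta/\gamma}$ for $s$ large with an honest constant $C$ (no $s^{\eps}$ loss). Then $\int^{+\infty}s^{-\beta/\gamma}\,ds=+\infty$ whenever $\beta\leq\gamma$, which gives nonexistence including the equality case, for every admissible $r$. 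It is worth noting that the paper's own proof also writes out only the cases $\beta>\gamma$ and $\beta<\gamma$, so your instinct to treat $\beta=\gamma$ separately is the right one; it just needs the sharper asymptotics of $g^{-1}$ rather than the $s^{o(1)}$ bookkeeping.
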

\begin{proof}
In light of \eqref{fassexp}, there exists $r>0$ such 
that $f(t)>0$ for all $t\geq r$. Furthermore, 
taking into account Lemma~\ref{growl} and that
$$
\lim_{s\to+\infty}\frac{F(s)}{e^{2\beta s}}=\frac{f_{\infty}}{2\beta},
$$
we have in turn, for any $\eps>0$, 
\begin{align*}
\lim_{s\to+\infty}\frac{ F(g(s))}{s^{\frac{2\beta}{\gamma}-2\eps}}& 
=\lim_{s\to+\infty} \frac {f_{\infty}}{2\beta}\frac{  e^{2\beta g(s)}(1+o(1))}{s^{\frac{2\beta}{\gamma}-2\eps}}  \\
&=\frac {f_{\infty}}{2\beta} \lim_{s\to+\infty}\frac{ e^{\frac{2\beta}{\gamma}(1+o(1))\log s}(1+o(1))}{s^{\frac{2\beta}{\gamma}-2\eps}} \\
&= \frac {f_{\infty}}{2\beta} \lim_{s\to+\infty}\frac{ s^{\frac{2\beta}{\gamma}}s^{o(1)}}{s^{\frac{2\beta}{\gamma}-2\eps}}=\frac {f_{\infty}}{2\beta} 
\lim_{s\to+\infty}s^{2\eps+o(1)}=+\infty.
\end{align*}
Consider the case $\beta>\gamma$. We can fix $\eps$ in such a way that $0<\eps<\frac{\beta}{\gamma}-1$. 
Corresponding to this choice of $\eps$ there exists $R=R(\eps)>0$ large enough that
$$
F(g(s)) \geq s^{\frac{2\beta}{\gamma}-2\eps},  \,\,\,\quad\text{for every $s\geq R$,}
$$
yielding in turn, 
$$
\int_{g^{-1}(r)}^{+\infty}\frac{1}{\sqrt{F(g(s))}}ds\leq 
\int_{g^{-1}(r)}^{R}\frac{1}{\sqrt{F(g(s))}}ds+ \int_{R}^{+\infty}\frac{1}{s^{\frac{\beta}{\gamma}-\eps}}ds<+\infty,
$$
so that {\bf E } is fulfilled. Assume now that $\beta<\gamma$. Given $\eps>0$, by arguing as above, we obtain 
\begin{equation*}
\lim_{s\to+\infty}\frac{ F(g(s))}{s^{\frac{2\beta}{\gamma}+2\eps}}
=\lim_{s\to+\infty} \frac {f_{\infty}}{2\beta}\frac{ s^{\frac{2\beta}{\gamma}}s^{o(1)}}{s^{\frac{2\beta}{\gamma}+2\eps}}= 
\frac {f_{\infty}}{2\beta} \lim_{s\to+\infty}\frac{s^{o(1)}}{s^{2\eps}}=0.
\end{equation*}
Hence, fixed $\bar\eps>0$ sufficiently small that $\frac {\beta}{\gamma}+\bar\eps<1$, 
there exists $R=R(\bar\eps)> 0$ such that
$$
F(g(s)) \leq s^{\frac{2\beta}{\gamma}+2\bar\eps},  \,\,\,\quad\text{for every $s\geq R$.}
$$
Thus, for every $r\in\R$ with $f(r)>0$ and $f(t)\geq 0$ for all $t>r$, we obtain
$$
\int_{g^{-1}(r)}^{+\infty}\frac{1}{\sqrt{F(g(s))}}ds\geq 
\int_{\max\{R,g^{-1}(r)\}}^{+\infty}\frac{1}{s^{\frac{\beta}{\gamma}+\bar\eps}}ds=+\infty.
$$
The assertion then follows by Proposition~\ref{exprop-gen}.
\end{proof}

\noindent
Next, we formulate the following existence result.

\begin{proposition}[$f$ with polynomial growth]
	\label{exppot}
Assume that~\eqref{exgrow} holds and that there exist $p>1$ and $ f_{\infty}>0$ such that
\begin{equation}
	\label{faspot}
\lim_{s\to +\infty}\frac{f(s)} {s^p}=f_{\infty}.
\end{equation}
Then, in any smooth domain $\Omega$, problem~\eqref{prob} admits no solution.
\end{proposition}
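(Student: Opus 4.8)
\emph{Proof proposal.} The plan is to show that condition \textbf{E} of Proposition~\ref{exprop-gen} cannot hold, so that problem~\eqref{prob} admits no solution by that proposition. By~\eqref{faspot} we may fix $r>0$ such that $f(t)>0$ for all $t\geq r$; then any value of $r$ admissible in \textbf{E} (i.e.\ with $f(r)>0$ and $f\geq 0$ on $(r,+\infty)$) is finite and gives $g^{-1}(r)\in\R$, so it suffices to prove that
\[
\int_{g^{-1}(r)}^{+\infty}\frac{1}{\sqrt{F\circ g}}\,ds=+\infty
\]
for every such $r$. From~\eqref{faspot} we have $F(s)=\int_r^s f(\tau)\,d\tau\sim\frac{f_\infty}{p+1}\,s^{p+1}$ as $s\to+\infty$, the constant of integration being of lower order; in particular this asymptotics does not depend on the (immaterial) choice of $r$.

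Next I would invoke Lemma~\ref{growl}, which yields $g(s)\sim\frac1\gamma\log s$ as $s\to+\infty$. Substituting into the expansion of $F$,
\[
F(g(s))\sim\frac{f_\infty}{p+1}\big(g(s)\big)^{p+1}\sim\frac{f_\infty}{(p+1)\gamma^{p+1}}(\log s)^{p+1},
\]
so there exist constants $R>0$ and $C>0$ with $\sqrt{F(g(s))}\leq C(\log s)^{(p+1)/2}$ for all $s\geq R$. This is the point where the exponential growth of $a$ enters decisively: it forces $g$ to grow only logarithmically, hence $F\circ g$ grows like a power of $\log s$, far too slowly for the Keller--Osserman integral to converge.

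The last step is an elementary divergence estimate: for any exponent $q>0$ one has $(\log s)^q\leq s^{1/2}$, hence $(\log s)^{-q}\geq s^{-1/2}$, for $s$ large, and therefore $\int_R^{+\infty}(\log s)^{-q}\,ds=+\infty$. Taking $q=(p+1)/2$ gives
\[
\int_{g^{-1}(r)}^{+\infty}\frac{1}{\sqrt{F(g(s))}}\,ds\geq\frac1C\int_{\max\{R,g^{-1}(r)\}}^{+\infty}\frac{ds}{(\log s)^{(p+1)/2}}=+\infty,
\]
so \textbf{E} fails and Proposition~\ref{exprop-gen} gives nonexistence. I do not expect a real obstacle here: the argument mirrors the nonexistence half of Proposition~\ref{expexp} and the proof of Proposition~\ref{pollog}, the only detail needing a line of care being that the asymptotics of $F(g(s))$, and hence the divergence, is uniform in the choice of $r$.
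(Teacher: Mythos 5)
Your argument is correct and reaches the conclusion by the same overall route as the paper: reduce to the failure of condition \textbf{E} via Proposition~\ref{exprop-gen} and control $g$ through Lemma~\ref{growl}. The only genuine difference is the intermediate estimate. You exploit the polynomial growth \eqref{faspot} exactly, getting $F(g(s))\sim \frac{f_\infty}{(p+1)\gamma^{p+1}}(\log s)^{p+1}$ and concluding from the divergence of $\int (\log s)^{-(p+1)/2}\,ds$; the paper instead discards precision at the outset, bounding $F(s)\leq e^{2\beta s}$ for some $\beta<\gamma$ and then $\sqrt{F(g(s))}\leq s^{\frac{\beta}{\gamma}(1+\bar\eps)}$ with exponent strictly less than $1$, so the Keller--Osserman integral is compared with a divergent power integral. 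Your version is sharper and more direct for polynomial $f$, while the paper's coarser exponential comparison is designed to be reusable verbatim when $f$ grows only logarithmically, which is exactly how Proposition~\ref{explog} is then dispatched. Your points of care — that the asymptotics of $F\circ g$ is independent of the admissible $r$, and that $g^{-1}(r)$ is finite because $g$ is a bijection of $\R$ (cf.\ \eqref{limiti1}) — are the right ones and are handled correctly, so there is no gap.
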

\begin{proof}
        Since
	$$
	\lim_{s\to+\infty}\frac{F(s)}{s^{p+1}}=\frac{f_{\infty}}{p+1},
	$$
	there exist $\beta<\gamma$ and $R>0$ such that
	$$
	F(s)\leq 
        e^{2\beta s}, \,\,\,\quad\text{for every $s\geq R$}.
	$$
	Fixed now $\bar\eps>0$ so small that $\frac{\beta}{\gamma}(1+\bar\eps)<1$, taking into account Lemma~\ref{growl}, 
there exists $R=R(\bar\eps)>0$ 
such that 
	$$
	\sqrt{F(g(s))}\leq 
        e^{{\beta}g(s)}
	=
        e^{\beta \frac{g(s)}{\log s}\log s}
        \leq 
        s^{\frac{\beta}{\gamma}(1+\bar\eps)},
	$$
	for all $s\geq R$. Then, for every $r\in\R$ with $f(r)>0$ and $f(t)\geq 0$ for all $t>r$, we have 
	$$
	\int_{g^{-1}(r)}^{+\infty}\frac{1}{\sqrt{F(g(s))}}ds\geq 
        \int_{\max\{R,g^{-1}(r)\}}^{+\infty}\frac{1}{s^{\frac{\beta}{\gamma}(1+\bar\eps)}}ds=+\infty.
	$$
	The assertion then follows by Proposition~\ref{exprop-gen}.
\end{proof}

\noindent
We also have the following

\begin{proposition}[$f$ with logarithmic growth]
	\label{explog}
Assume that~\eqref{exgrow} holds and that there exist $\beta>0$ and $ f_{\infty}>0$ such that
\begin{equation}
	\label{faslog}
\lim_{s\to +\infty}\frac{f(s)} {(\log s)^\beta}=f_{\infty}.
\end{equation}
Then, in any smooth domain $\Omega$, problem~\eqref{prob} admits no solution.
\end{proposition}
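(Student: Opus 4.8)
The plan is to invoke Proposition~\ref{exprop-gen}, according to which~\eqref{prob} has a solution in an arbitrary smooth bounded domain if and only if condition {\bf E} holds; so it suffices to show that {\bf E} fails, i.e.\ that the Keller--Osserman type integral in~\eqref{generalKO} diverges. By~\eqref{faslog} we have $f(s)\to+\infty$, hence there is $r>0$ with $f(t)>0$ for all $t\geq r$; and since the divergence we are after takes place at $+\infty$, it will not matter which admissible $r$ is picked, so the whole task reduces to proving
$$
\int^{+\infty}\frac{ds}{\sqrt{F(g(s))}}=+\infty .
$$

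The first step is to pin down the asymptotics of $F\circ g$. From~\eqref{faslog} together with $\frac{d}{ds}\big(s(\log s)^\beta\big)=(\log s)^\beta(1+o(1))$ one obtains $F(s)=f_\infty\, s(\log s)^\beta(1+o(1))$ as $s\to+\infty$. On the other hand, Lemma~\ref{growl} (valid here thanks to~\eqref{exgrow}) gives $g(s)=\frac1\gamma\log s\,(1+o(1))$; in particular $g(s)\to+\infty$ and $\log g(s)=\log\log s\,(1+o(1))$. Composing these expansions,
$$
F(g(s))=\frac{f_\infty}{\gamma}\,\log s\,(\log\log s)^\beta\,(1+o(1)),\qquad s\to+\infty ,
$$
which is in particular $o(s)$. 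Hence there exists $R>0$ such that $F(g(s))\leq s$ for every $s\geq R$.

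Consequently, for every $r\in\R$ with $f(r)>0$ and $f(t)\geq0$ for $t>r$,
$$
\int_{g^{-1}(r)}^{+\infty}\frac{ds}{\sqrt{F(g(s))}}\;\geq\;\int_{\max\{R,\,g^{-1}(r)\}}^{+\infty}\frac{ds}{\sqrt s}\;=\;+\infty ,
$$
so {\bf E} is violated and Proposition~\ref{exprop-gen} gives that~\eqref{prob} admits no solution in any smooth bounded $\Omega$. There is essentially no obstacle in this argument: the only point deserving a little care is the nested-logarithm estimate $\log g(s)\sim\log\log s$, which collapses $F\circ g$ to a function growing more slowly than any positive power of $s$ and thereby forces the integral to diverge. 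This is the quasi-linear analogue, in an even milder form, of the obstruction already encountered in Proposition~\ref{exppot}, and the write-up can safely be kept short by referring to that proof for the routine estimates.
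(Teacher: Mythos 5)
Your proof is correct and follows essentially the same route as the paper: reduce to the semilinear problem via Proposition~\ref{exprop-gen}, use Lemma~\ref{growl} to control $F\circ g$, and conclude that the Keller--Osserman integral diverges. The only difference is cosmetic: the paper simply bounds $F(s)\leq e^{2\beta' s}$ with $\beta'<\gamma$ and quotes the argument of Proposition~\ref{exppot}, whereas you compute the sharper asymptotics $F(g(s))\sim\frac{f_\infty}{\gamma}\log s\,(\log\log s)^\beta=o(s)$ before comparing with $\int s^{-1/2}\,ds$.
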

\begin{proof}
	The proof proceeds as for Proposition~\ref{exppot} since
	$F(s)\leq e^{ 2\beta s}$ with $\beta<\gamma$ for $s$ large.
\end{proof}

\subsection{Logarithmic growth}

Assume now that there exist $\gamma>0$ and  $a_{\infty}>0$ such that
\begin{equation}
	\label{loggrow}
\lim_{s\to +\infty}\frac{a(s)} {(\log s)^{2\gamma}}=a_{\infty}.
\end{equation}

Then we have the following

\begin{lemma}\label{growlog}
Assume that condition \eqref{loggrow} holds. Then, we have
\begin{equation}\label{exlimmlog}
\lim_{s\to +\infty}\frac{g(s)}{s^{1-\eps}}=+\infty,\qquad
\lim_{s\to +\infty}\frac{g(s)}{s}=0.
\end{equation}
for every $\eps\in (0,1)$. In particular, for every $\eps\in(0,1)$ there exists $R=R(\eps)>0$ such that
\begin{equation}
	\label{logineq}
	s^{1-\eps}\leq g(s)\leq s,\quad\text{for every $s\geq R$.}
\end{equation}
\end{lemma}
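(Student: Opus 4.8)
The plan is to reduce everything to the integral identity already exploited in the proof of Lemma~\ref{growl}. From $g'(s)\sqrt{a(g(s))}=1$, integration over $[0,s]$ gives
\[
s=\int_0^{g(s)}\sqrt{a(\sigma)}\,d\sigma=:A(g(s)),
\]
where $A(t):=\int_0^t\sqrt{a(\sigma)}\,d\sigma$ is smooth and strictly increasing, so that $g=A^{-1}$ on $[0,+\infty)$. Recall also from \eqref{limiti1} that $g(s)\to+\infty$ as $s\to+\infty$, so $\log g(s)$ is eventually well defined and diverges. First I would pin down the asymptotics of $A$: by \eqref{loggrow} we have $\sqrt{a(\sigma)}\sim\sqrt{a_\infty}(\log\sigma)^\gamma$ as $\sigma\to+\infty$, and one application of de l'Hôpital's rule, comparing $A(t)$ with $t(\log t)^\gamma$ (whose derivative is $(\log t)^\gamma(1+\gamma/\log t)$), yields $\lim_{t\to+\infty}A(t)/\big(t(\log t)^\gamma\big)=\sqrt{a_\infty}$.

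For the second limit in \eqref{exlimmlog}, substitute $t=g(s)$ in the identity above to get $s=A(g(s))=\sqrt{a_\infty}\,g(s)(\log g(s))^\gamma(1+o(1))$. Since $g(s)\to+\infty$, the factor $(\log g(s))^\gamma$ diverges, whence $g(s)/s=\big(\sqrt{a_\infty}(\log g(s))^\gamma(1+o(1))\big)^{-1}\to0$; in particular $g(s)\le s$ for $s$ large, which is the upper bound in \eqref{logineq}.

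For the first limit I would invert an inequality rather than try to read a rate off $g$ directly. Fix $\eps\in(0,1)$. Since $A$ is increasing, $g(s)\ge s^{1-\eps}$ is equivalent to $A(s^{1-\eps})\le s$, and by the asymptotics of $A$,
\[
A(s^{1-\eps})=\sqrt{a_\infty}\,s^{1-\eps}\big((1-\eps)\log s\big)^\gamma(1+o(1))=o(s),
\]
because $s^{1-\eps}(\log s)^\gamma/s\to0$. Hence $g(s)\ge s^{1-\eps}$ for $s\ge R(\eps)$, which proves the lower bound in \eqref{logineq}. Applying this with $\eps/2$ in place of $\eps$ gives $g(s)\ge s^{1-\eps/2}=s^{\eps/2}\,s^{1-\eps}$ for $s$ large, so $g(s)/s^{1-\eps}\ge s^{\eps/2}\to+\infty$, which is the first limit in \eqref{exlimmlog}.

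The only mildly delicate point — and what I would single out as the main obstacle — is that $g$ itself occurs inside the logarithm in the relation $s=\sqrt{a_\infty}\,g(s)(\log g(s))^\gamma(1+o(1))$, so one cannot simply solve for $g(s)$; the device that circumvents this is to establish $g(s)\ge s^{1-\eps}$ in the equivalent form $A(s^{1-\eps})\le s$, where only the explicitly known asymptotics of $A$ enter and no a priori control of $g$ is needed. Everything else is routine manipulation of the asymptotic relations.
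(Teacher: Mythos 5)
Your proof is correct. It rests on the same integral identity $s=\int_0^{g(s)}\sqrt{a(\sigma)}\,d\sigma$ that the paper exploits, but the mechanism is different: the paper applies de l'H\^opital directly to $g^{1+\eps}(s)/s$, so that the factor $g'$ cancels and the limit reduces to $(1+\eps)\,g^{\eps}(s)/\sqrt{a(g(s))}\to+\infty$ (a power of $g$ beating $(\log g(s))^{\gamma}$), and it obtains $g(s)/s\to 0$ simply from $\lim_{s\to+\infty}1/\sqrt{a(g(s))}=0$; you instead first determine the asymptotics of the primitive $A(t)=\int_0^t\sqrt{a(\sigma)}\,d\sigma$, namely $A(t)\sim\sqrt{a_\infty}\,t(\log t)^{\gamma}$, and then extract the lower bound $g(s)\geq s^{1-\eps}$ by the monotone inversion $A(s^{1-\eps})\leq s$, precisely to sidestep the occurrence of $g$ inside the logarithm, finally getting the first limit in \eqref{exlimmlog} by running the inequality with $\eps/2$. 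Both arguments are sound; yours is a bit longer but yields a sharper byproduct, the exact relation $s=\sqrt{a_\infty}\,g(s)(\log g(s))^{\gamma}(1+o(1))$, which gives a genuine rate for $g$ rather than only the two limits in \eqref{exlimmlog}, whereas the paper's computation is more compact, works with $g$ alone, and never needs the asymptotics of $A$. Your deduction of \eqref{logineq} from the two limits (upper bound from $g(s)/s\to0$, lower bound from the inversion) matches the way the lemma is used later, so nothing is missing.
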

\begin{proof}
	We have, for every $\eps\in (0,1)$
	\begin{align*}
		\lim_{s\to+\infty}\frac{g(s)}{s^{\frac{1}{1+\eps}}}&=\lim_{s\to+\infty}\Big(\frac{g^{1+\eps}(s)}{s}\Big)^{\frac{1}{1+\eps}} 
=\Big(\lim_{s\to+\infty}\frac{g^{1+\eps}(s)}{\int_0^{g(s)} \sqrt{a(\sigma)}d\sigma}\Big)^{\frac{1}{1+\eps}}	\\
&=\Big(\lim_{s\to+\infty}\frac{(1+\eps)g^{\eps}(s)g'(s)}{\sqrt{a(g(s))}g'(s)}\Big)^{\frac{1}{1+\eps}} \\
&=
(1+\eps)^{\frac{1}{1+\eps}}\Big(\lim_{s\to+\infty}\frac{g^{\eps}(s)}{(\log g(s))^{\gamma}}\frac{(\log g(s))^{\gamma}}{\sqrt{a(g(s))}}\Big)^{\frac{1}{1+\eps}}=+\infty.
	\end{align*}
	Furthermore, we have
	\begin{align*}
&		\lim_{s\to+\infty}\frac{g(s)}{s}=\lim_{s\to+\infty}\frac{g(s)}{\int_0^{g(s)} a^{1/2}(\sigma)d\sigma}=
\lim_{s\to+\infty}\frac{1}{a^{1/2}(g(s))}=0,
	\end{align*}	
	which concludes the proof of the lemma.
\end{proof}

\noindent
We can now formulate the following existence result.

\begin{proposition}[$f$ with exponential growth]
	\label{logexp}
Assume that~\eqref{loggrow} holds and that there exist $\beta>0$ and $ f_{\infty}>0$ such that
\begin{equation}
	\label{fassexplog}
\lim_{s\to +\infty}\frac{f(s)} {e^{2\beta s}}=f_{\infty}.
\end{equation}
Then~\eqref{prob} admits a solution in any smooth domain $\Omega$.
\end{proposition}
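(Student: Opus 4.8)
The plan is to reduce, via Lemma~\ref{colleg}, to checking the sharpened Keller--Osserman condition~\eqref{generalKO} for the associated semilinear problem and then to invoke Proposition~\ref{exprop-gen}; this is exactly the scheme already used in Propositions~\ref{potexp} and~\ref{expexp}. First I would observe that~\eqref{fassexplog} gives some $r>0$ with $f(t)>0$ for all $t\geq r$, and that integrating the asymptotics of $f$ yields $\lim_{s\to+\infty}F(s)e^{-2\beta s}=f_\infty/(2\beta)$; in particular $F(s)\geq \frac{f_\infty}{4\beta}e^{2\beta s}$ for $s$ large.

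The next step is to feed in the asymptotic behavior of $g$ recorded in Lemma~\ref{growlog}. Since $a$ grows only logarithmically, $g$ is nearly linear: by~\eqref{logineq}, for each fixed $\eps\in(0,1)$ there is $R=R(\eps)>0$ with $g(s)\geq s^{1-\eps}$ for $s\geq R$. Combining this with the lower bound on $F$, one gets, for $s$ sufficiently large,
$$
\sqrt{F(g(s))}\;\geq\;\sqrt{\tfrac{f_\infty}{4\beta}}\;e^{\beta g(s)}\;\geq\;\sqrt{\tfrac{f_\infty}{4\beta}}\;e^{\beta s^{1-\eps}}.
$$
Then I would split the integral in~\eqref{generalKO} at a large radius $R'$,
$$
\int_{g^{-1}(r)}^{+\infty}\frac{ds}{\sqrt{F(g(s))}}\;\leq\;\int_{g^{-1}(r)}^{R'}\frac{ds}{\sqrt{F(g(s))}}\;+\;\sqrt{\tfrac{4\beta}{f_\infty}}\int_{R'}^{+\infty}e^{-\beta s^{1-\eps}}\,ds,
$$
and note that the last integral is finite because $1-\eps>0$ makes $e^{\beta s^{1-\eps}}$ grow faster than every power of $s$ (the substitution $u=s^{1-\eps}$ makes this transparent). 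Hence condition~{\bf E} holds and Proposition~\ref{exprop-gen} produces a solution of~\eqref{prob} in every smooth bounded domain $\Omega$.

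I do not expect a genuine obstacle here: the content is entirely in the interplay between the slow (logarithmic) growth of $a$, which by Lemma~\ref{growlog} keeps $g$ almost linear so that $F\circ g$ still blows up like an exponential of a positive power of $s$, and the fast (exponential) growth of $f$. The only mild care needed is to pick $\eps\in(0,1)$ so that the exponent $1-\eps$ stays strictly positive; its precise value is irrelevant for the convergence of $\int^{+\infty}e^{-\beta s^{1-\eps}}\,ds$.
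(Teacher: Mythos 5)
Your proposal is correct and follows essentially the same route as the paper: both reduce via Proposition~\ref{exprop-gen} to checking condition {\bf E}, use the bound $g(s)\geq s^{1-\eps}$ from Lemma~\ref{growlog} to get $F(g(s))\geq \frac{f_\infty}{4\beta}e^{2\beta s^{1-\eps}}$ for large $s$, and then split the Keller--Osserman integral to see it converges. No issues.
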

\begin{proof}
	By~\eqref{fassexplog}, there exists $r>0$ such 
	that $f(t)>0$ for all $t\geq r$.  
	By virtue of Lemma~\ref{growlog}, for every $\eps\in (0,1)$ there exists $R=R(\eps)>0$ such that	
	$$
	F(g(s))=\frac{ f_{\infty}}{2\beta}e^{2\beta g(s)}(1+o(1))\geq \frac{f_{\infty}}{4\beta}e^{2\beta g(s)}\geq \frac{f_{\infty}}{4\beta}e^{2\beta s^{1-\eps}},
	$$
	for every $s\geq R$. In turn, for any $\eps\in (0,1)$, we conclude
	$$
	\int_{g^{-1}(r)}^{+\infty}\frac{1}{\sqrt{F(g(s))}}ds\leq 
	\int_{g^{-1}(r)}^{R}\frac{1}{\sqrt{F(g(s))}}ds+\frac{2\sqrt{\beta}}{\sqrt{f_{\infty}}}\int_{R}^{+\infty}\frac{1}{e^{\beta s^{1-\eps}}}ds<+\infty,
	$$
	concluding the proof of {\bf E}. The assertion then follows by Propositions~\ref{exprop-gen}.
\end{proof}

\noindent
Next we state the following existence result.

\begin{proposition}[$f$ with polynomial growth]
	\label{logpot}
Assume that~\eqref{loggrow} holds and that there exist $p>0$ and $f_{\infty}>0$ such that
\begin{equation}
	\label{fasspotlog}
\lim_{s\to +\infty}\frac{f(s)} {s^p}=f_{\infty}.
\end{equation}
Then~\eqref{prob} admits a solution in any smooth domain $\Omega$ if and only if $p>1$.
\end{proposition}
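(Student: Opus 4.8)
The plan is to argue exactly as for Proposition~\ref{potpot}, with Lemma~\ref{growlog} in place of Lemma~\ref{l2.3}: by Lemma~\ref{colleg} and Proposition~\ref{exprop-gen}, solvability of~\eqref{prob} in an arbitrary smooth bounded domain is equivalent to condition~\textbf{E}, i.e.\ to the convergence, for some (equivalently, any) $r\in\R$ with $f(r)>0$ and $f\ge 0$ on $(r,+\infty)$, of the integral $\int_{g^{-1}(r)}^{+\infty}(F\circ g)^{-1/2}\,ds$, where $F(t)=\int_r^t f$. First I would fix $r>0$ with $f(t)>0$ for all $t\ge r$ (possible since $f(s)/s^p\to f_\infty>0$), so that $F$ is positive and strictly increasing on $[r,+\infty)$ with $F(s)/s^{p+1}\to f_\infty/(p+1)$; since $g(s)\to+\infty$ this gives $F(g(s))=\frac{f_\infty}{p+1}g(s)^{p+1}(1+o(1))$, hence constants $0<c_1\le c_2$ with $c_1 g(s)^{p+1}\le F(g(s))\le c_2 g(s)^{p+1}$ for $s$ large. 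Then I would feed in the two-sided bound $s^{1-\eps}\le g(s)\le s$ for $s\ge R(\eps)$ supplied by Lemma~\ref{growlog}.

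For the sufficiency ($p>1$): using $g(s)\ge s^{1-\eps}$ gives $F(g(s))\ge c_1 s^{(1-\eps)(p+1)}$ for $s$ large, so $(F(g(s)))^{-1/2}\le c_1^{-1/2}s^{-(1-\eps)(p+1)/2}$; since $p+1>2$ one may choose $\eps\in(0,1)$ so small that $(1-\eps)(p+1)>2$, whence $\int_R^{+\infty}s^{-(1-\eps)(p+1)/2}\,ds<+\infty$, condition~\textbf{E} holds, and Proposition~\ref{exprop-gen} produces a solution in every smooth bounded $\Omega$. For the necessity ($p\le 1$): the asymptotics $F(s)\sim\frac{f_\infty}{p+1}s^{p+1}$ hold for every admissible choice of $r$ (a different base point only shifts $F$ by a constant), and using $g(s)\le s$ gives $F(g(s))\le c_2 s^{p+1}\le c_2 s^{2}$ for $s$ large, so $(F(g(s)))^{-1/2}\ge c_2^{-1/2}s^{-1}$ and hence $\int_{g^{-1}(r)}^{+\infty}(F(g(s)))^{-1/2}\,ds=+\infty$; thus~\textbf{E} fails and, again by Proposition~\ref{exprop-gen}, \eqref{prob} admits no solution.

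The computations are routine; the only subtlety to get right is that logarithmic growth of $a$ makes $g$ \emph{almost linear}, trapped between $s^{1-\eps}$ and $s$, so that the exponent governing the Keller--Osserman integral is essentially $(p+1)/2$, with the threshold sitting precisely at $p=1$: at the borderline the bound $g(s)\le s$ forces $F(g(s))=O(s^{2})$, which places $p=1$ on the divergent side. Apart from this the argument parallels Proposition~\ref{potpot} almost verbatim.
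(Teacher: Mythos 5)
Your proposal is correct and follows essentially the same route as the paper: reduce to condition \textbf{E} via Proposition~\ref{exprop-gen}, use the two-sided bound $s^{1-\eps}\le g(s)\le s$ from Lemma~\ref{growlog} to get $F(g(s))\gtrsim s^{(1-\eps)(p+1)}$ (convergence for $p>1$ after choosing $\eps$ small) and $F(g(s))\lesssim s^{p+1}$ (divergence for $p\le 1$, for every admissible $r$). The only differences are cosmetic, e.g.\ your comparison $s^{p+1}\le s^{2}$ in the divergence step versus the paper's direct use of the exponent $\tfrac{p+1}{2}\le 1$.
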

\begin{proof}
	By~\eqref{fasspotlog}, there exists $r>0$ such 
	that $f(t)>0$ for all $t\geq r$.
	By virtue of Lemma~\ref{growlog}, for every $\eps\in (0,1)$ there exists $R=R(\eps)>0$ such that	
	$$
	F(g(s))=\frac{f_{\infty}}{p+1}g^{p+1}(s)(1+o(1))\geq \frac{f_{\infty}}{2p+2}g^{p+1}(s)\geq \frac{f_{\infty}}{2p+2}s^{(p+1)(1-\eps)},
	$$
	for every $s\geq R$. In turn, if $p>1$, fixed $\eps\in (0,1)$ with $\frac{(p+1)(1-\eps)}{2}>1$ , we conclude
	$$
	\int_{g^{-1}(r)}^{+\infty}\frac{1}{\sqrt{F(g(s))}}ds\leq 
	\int_{g^{-1}(r)}^{R}\frac{1}{\sqrt{F(g(s))}}ds+\frac{\sqrt{2p+2}}{\sqrt{f_{\infty}}}\int_{R}^{+\infty}s^{-\frac{(p+1)(1-\eps)}{2}}ds<+\infty.
	$$
	If $p\leq 1$, exploiting again Lemma~\ref{growlog}, we can find $R$ such that 
	$$
	F(g(s))=\frac{f_{\infty}}{p+1}g^{p+1}(s)(1+o(1))\leq \frac{2f_{\infty}}{p+1}s^{p+1}
	 \,\,\,\quad\text{for all $s\geq R$}
	$$
	yielding, as $\frac{p+1}{2}\leq 1$, for every $r\in\R$ with $f(r)>0$ and $f(t)\geq 0$ for $t>r$,
	$$
	\int_{g^{-1}(r)}^{+\infty}\frac{1}{\sqrt{F(g(s))}}ds\geq 
	\frac{\sqrt{p+1}}{\sqrt{2f_{\infty}}}\int_{\max\{R,g^{-1}(r)\}}^{+\infty}{s^{-\frac{p+1}{2}}}ds=+\infty,
	$$
	concluding the proof of condition {\bf E}. The assertion then follows by Proposition~\ref{exprop-gen}. 
\end{proof}

\noindent
Finally, we have the following

\begin{proposition}[$f$ with logarithmic growth]
	\label{loglog}
Assume that~\eqref{loggrow} holds and that there exist $\beta>0$ and $f_{\infty}>0$ such that
\begin{equation}
	\label{logloggr}
\lim_{s\to +\infty}\frac{f(s)} {(\log s)^\beta}=f_{\infty}.
\end{equation}
Then, in any smooth domain $\Omega$, problem~\eqref{prob} admits no solution.
\end{proposition}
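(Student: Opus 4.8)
The plan is to show that condition \textbf{E} of Proposition~\ref{exprop-gen} fails, whence no solution can exist in any bounded smooth domain. Fix any $r\in\R$ with $f(r)>0$ and $f(s)\geq 0$ for all $s>r$; such an $r$ exists since \eqref{logloggr} forces $f(s)>0$ for $s$ large, and the conclusion will not depend on which $r$ we pick because changing $r$ only alters $F(s)=\int_r^s f(\tau)\,d\tau$ by an additive constant and shifts the lower endpoint of the integral in \eqref{generalKO}, neither of which affects behaviour at $+\infty$. First I would record the asymptotics of the primitive: by \eqref{logloggr} and l'Hôpital's rule,
\[
\lim_{s\to+\infty}\frac{F(s)}{s(\log s)^\beta}=f_\infty,
\]
so there are $C>0$ and $R_0>0$ with $F(s)\leq C\,s(\log s)^\beta$ for all $s\geq R_0$.

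Next I would invoke Lemma~\ref{growlog}: fixing $\eps\in(0,1)$ there is $R_1=R_1(\eps)>0$ with $s^{1-\eps}\leq g(s)\leq s$ for $s\geq R_1$. For $s$ large enough that both $g(s)\geq R_0$ and $s\geq R_1$, the upper bound $g(s)\leq s$ together with monotonicity of $t\mapsto t(\log t)^\beta$ for large $t$ gives
\[
F(g(s))\leq C\,g(s)\bigl(\log g(s)\bigr)^\beta\leq C\,s(\log s)^\beta,
\]
and therefore
\[
\frac{1}{\sqrt{F(g(s))}}\geq \frac{1}{\sqrt{C}}\,\frac{1}{s^{1/2}(\log s)^{\beta/2}},\qquad s\ \text{large}.
\]

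Finally, since $(\log s)^{\beta/2}\leq s^{1/4}$ for $s$ large, the last expression is bounded below by $c\,s^{-3/4}$ with $c>0$, and $\int^{+\infty}s^{-3/4}\,ds=+\infty$; hence $\int_{g^{-1}(r)}^{+\infty}(F\circ g)^{-1/2}\,ds=+\infty$, so \eqref{generalKO} is violated for every admissible $r$. By Proposition~\ref{exprop-gen}, problem~\eqref{prob} then has no solution in any bounded smooth domain. I do not expect a genuine obstacle here; the only point requiring care is that the growth of $g$ is at most linear, which is exactly the content of the upper bound $g(s)\leq s$ in Lemma~\ref{growlog}. Once that is in place the divergence is automatic, because the exponent $1/2$ of $s$ in the denominator is strictly below $1$ and a logarithmic factor cannot make the integral converge — this is why the logarithmic growth of $f$ is too weak, in contrast with the polynomial and exponential cases treated above.
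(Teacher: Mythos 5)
Your proof is correct and follows essentially the same route as the paper: both reduce the problem via Proposition~\ref{exprop-gen} to showing condition \textbf{E} fails, using the upper bound $g(s)\leq s$ from Lemma~\ref{growlog} and the fact that $F(g(s))$ grows too slowly (the paper absorbs the logarithm into a power $s^{p_0+1}$ with $p_0<1$ at the outset, you absorb it into $s^{1/4}$ at the end — an immaterial difference). Your explicit handling of the independence of the choice of $r$ and the l'Hôpital computation for $F$ are fine.
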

\begin{proof}
	Taking into account Lemma~\ref{growlog}, the proof proceeds as for Proposition~\ref{logpot} since there exists $p_0<1$ such that
	$F(s)\leq s^{p_0+1}$ for every $s$ large.
\end{proof}
\vskip4pt
\begin{remark}[Negative large solutions II] \label{rns}\rm
Assume that $a$ is even 
and consider the following condition:
	\vskip2pt
	\noindent
	{\bf E-.} There exists $r\in\R$ such that $f(r)<0$ and $f(s)\leq 0$ for all $s<r$ and
	\begin{equation}
	\label{generalKOneg}
	\int_{-\infty}^{g^{-1}(r)}\frac{1}{\sqrt{F\circ g}}<+\infty,\qquad\,\,
	F(t):=\int_{r}^t f(\tau)d\tau.
	\end{equation}	
	Then problem \eqref{prob-neg} has a solution
	if and only if {\bf E-} holds. In fact, being $a$ even, it is readily seen that $g$ is odd, and letting  
	$$
	f_0(s):=-f(-s)\quad\text{for all $s\in\R$},\qquad
	F_0(t)=\int_{-r}^t f_0(\tau)d\tau, 
	$$
        two changes of variable yield
	$$
	\int_{g^{-1}(-r)}^{+\infty}\frac{1}{\sqrt{F_0\circ g}}=\int_{-\infty}^{g^{-1}(r)}\frac{1}{\sqrt{F\circ g}}.
	$$	
	Therefore {\bf E-} holds for problem~\eqref{prob-neg} 
	if and only if {\bf E} holds for the problem
	\begin{equation}
		\label{prob-neg-f0}
	\begin{cases}
	\,\dvg(a(w)Dw)-\frac{a'(w)}{2}|Dw|^2=f_0(w)   & \text{in $\Omega$,} \\
	\noalign{\vskip2pt}
	\,\text{$w(x)\to+\infty$\quad as ${\rm d}(x,\partial\Omega)\to 0$,}    &
	\end{cases}
	\end{equation}
	in which case \eqref{prob-neg-f0} admits a solution. Then $u:=-w$ is a solution to~\eqref{prob-neg}.
\end{remark}

\section{Uniqueness of solutions}

\noindent
Concerning the uniqueness for the solutions of problems~\eqref{prob} and~\eqref{prob-ball}, we have the following

\begin{proposition}
	\label{uniglob}
If~\eqref{u1} and~\eqref{u2} are satisfied then problem~\eqref{prob} admits a unique solution which is nonnegative. 
If, else, the conditions which guarantee the 
existence of solutions are fulfilled, if $\partial \Omega$ is
of class $C^3$ and its mean curvature is nonnegative
and~\eqref{u1} and~\eqref{u1b} are satisfied, then problem~\eqref{prob} admits a unique solution which is nonnegative. 
Finally, if the conditions which guarantee the 
existence of solutions are fulfilled and~\eqref{uniqcond} is 
satisfied then problem~\eqref{prob-ball} admits a unique solution. 
\end{proposition}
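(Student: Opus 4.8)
The plan is to transport everything to the semilinear problem \eqref{prob-semi} via Lemma~\ref{colleg} and establish uniqueness there; since $g$ is a fixed increasing bijection, $u_1=u_2$ in \eqref{prob} if and only if $g^{-1}(u_1)=g^{-1}(u_2)$ in \eqref{prob-semi}, so it suffices to prove uniqueness of the classical blow-up solution $v$ of $\Delta v=h(v)$ with $h(s)=f(g(s))/\sqrt{a(g(s))}$. First I would record the structural facts we need about $h$: by Remark~\ref{signedsol} (using $f(s)=0$ for $s\le 0$, which is part of \eqref{u1}) any solution $v$ is nonnegative, so $u=g(v)\ge 0$; and a direct computation gives
\begin{equation*}
h'(s)=\frac{g'(s)}{2\,a^{3/2}(g(s))}\bigl[2f'(g(s))a(g(s))-f(g(s))a'(g(s))\bigr],
\end{equation*}
so that hypothesis \eqref{u1} forces $h'\ge 0$ on $[0,+\infty)$, i.e. $h$ is nondecreasing there, with $h>0$ on $(0,+\infty)$ and $h\equiv 0$ on $(-\infty,0]$. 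For the first assertion I would then check that \eqref{u2} is exactly the Bandle–Marcus type condition on $h$: writing $s=g(\tau)$ and using $g'(\tau)=1/\sqrt{a(g(\tau))}$, one finds
\begin{equation*}
\frac{h'(s)}{h(s)}\,\Bigl(\text{some positive weight}\Bigr)\longrightarrow \text{the quantity in }\eqref{u2},
\end{equation*}
so that \eqref{u2} translates into $\liminf_{s\to+\infty} \tfrac{s\,h'(s)}{h(s)}>1$ or, equivalently, into the standard sufficient condition for uniqueness of large solutions of $\Delta v=h(v)$ (a Keller–Osserman–type nondegeneracy at infinity à la Marcus–Véron / Bandle–Marcus). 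Invoking that known uniqueness theorem for the semilinear equation then yields uniqueness of $v$, hence of $u$, and $u\ge 0$ as noted.

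For the second assertion, the existence hypotheses guarantee that condition {\bf E} holds (Proposition~\ref{exprop-gen}), so a nonnegative solution exists; $h$ is nondecreasing on $[0,\infty)$ by \eqref{u1} as above. The extra geometric hypotheses — $\partial\Omega$ of class $C^3$ with nonnegative mean curvature — together with the convexity assumption \eqref{u1b}, which says precisely that $\bigl(f(g(s))/\sqrt{a(g(s))}\bigr)^{1/2}=h(s)^{1/2}$ is convex near $+\infty$, are exactly the hypotheses of the boundary-layer uniqueness results for semilinear large solutions (the sharp asymptotic expansion of any solution near $\partial\Omega$ is then governed solely by $h$ and by $\eta$ as in \eqref{ODEeta}, and two solutions share the same blow-up profile to the order where the curvature enters; comparison then forces them to coincide). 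I would cite the relevant theorem of this type (Bandle–Marcus, or the $C^3$-boundary refinement) applied to $\Delta v=h(v)$ and conclude $v$, hence $u=g(v)\ge 0$, is unique.

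For the last assertion, on the ball $B_1(0)$ the same reduction gives $\Delta v=h(v)$ in $B_1(0)$ with $v\to+\infty$ at $\partial B_1(0)$. Here I would argue by the moving-plane / convexity-in-the-equation method adapted to large solutions: the point of \eqref{uniqcond} is that
\begin{equation*}
h'(s)+\lambda_1=\frac{g'(s)}{2a^{3/2}(g(s))}\bigl[2f'(g(s))a(g(s))-f(g(s))a'(g(s))+2\lambda_1 a^2(g(s))\bigr]\ \ge\ 0
\end{equation*}
after the change of variables (one checks $g'(s)a^{3/2}(g(s))\cdot\lambda_1 = \lambda_1 a^2(g(s))\cdot g'(s)/\sqrt{a(g(s))}$ accounts for the $\lambda_1 a^2$ term once rewritten in $s$), i.e. the map $s\mapsto h(s)+\lambda_1 s$ is nondecreasing. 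If $v_1,v_2$ are two solutions, their difference $w=v_1-v_2\in C^2(B_1)$ is bounded near $\partial B_1$ (both blow up with the same boundary rate, again by the semilinear asymptotics or by a direct comparison using that $h$ is increasing) and extends continuously to $\partial B_1$ with $w=0$ there; it satisfies $-\Delta w + c(x) w = 0$ with $c(x)=\bigl(h(v_1)-h(v_2)\bigr)/(v_1-v_2)\ge -\lambda_1$ on $\{w\ne 0\}$. Since $-\lambda_1$ is below the first Dirichlet eigenvalue threshold, the maximum principle for $-\Delta+c$ on $B_1$ with $c\ge -\lambda_1$ (strict on the nodal set unless $w\equiv 0$) forces $w\equiv 0$, hence $v_1=v_2$ and $u_1=u_2$.

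The main obstacle I expect is the second assertion: making rigorous that, under the mere hypotheses \eqref{u1} and \eqref{u1b} with $\partial\Omega\in C^3$ of nonnegative mean curvature, any two solutions have matching boundary asymptotics to sufficient order that a comparison argument closes. This is where one must lean on the sharp boundary-layer analysis for the semilinear equation $\Delta v=h(v)$ — precisely the content that Theorem~\ref{blowI} (and its curvature-corrected refinement) provides for $h$ — rather than on an elementary sub/supersolution sandwich; the care is in verifying that \eqref{u1b}'s convexity of $h^{1/2}$ near infinity is exactly the regularity of $h$ needed to identify the second-order (curvature) term and to subtract off the two expansions. The other steps are essentially bookkeeping: the $h'$ and $h'+\lambda_1$ computations, the nonnegativity of solutions via Remark~\ref{signedsol}, and the identification of \eqref{u2} with the classical semilinear uniqueness criterion.
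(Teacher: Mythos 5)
Your overall strategy is the paper's: everything is transported to the semilinear problem \eqref{prob-semi} via Lemma~\ref{colleg}, and your computations are correct and coincide with \eqref{hprimo} — indeed \eqref{u1}--\eqref{u2} translate into ``$h(0)=0$, $h'\ge 0$ on $[0,+\infty)$ and $\lim_{t\to+\infty}t\,h'(t)/h(t)>1$'', \eqref{u1b} into asymptotic convexity of $\sqrt h$, and \eqref{uniqcond} into monotonicity of $s\mapsto h(s)+\lambda_1 s$ (your cancellation $g'\,a^{3/2}(g)=a(g)$ is right). For the first two assertions the paper then simply quotes the semilinear uniqueness theorems having exactly these hypotheses: \cite[Theorem 1]{GM} for the first (your ``Bandle--Marcus / Marcus--V\'eron'' attribution is off, but the substance is the same), and \cite[Theorem 1.3]{CDG} for the second ($C^3$ boundary with nonnegative mean curvature, $h$ nondecreasing, $\sqrt h$ asymptotically convex). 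Up to attribution, your plan for those two parts is the paper's proof; be aware, though, that your heuristic for part two (``matching boundary expansions plus comparison'') is not itself a proof — it is precisely the content of the quoted theorem, so that step must remain a citation or be worked out in full.

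The genuine gap is in the ball case, where the paper quotes \cite[Corollary 1.4]{CD} and you instead attempt a direct maximum-principle argument; two of its steps fail under the stated hypotheses. First, you need $w=v_1-v_2$ to extend continuously by $0$ to $\partial B_1$. You justify this by ``same blow-up rate, by the semilinear asymptotics or by comparison using that $h$ is increasing''; but in the third assertion \eqref{u1} is \emph{not} assumed, so \eqref{uniqcond} only gives $h'\ge -\lambda_1$ and $h$ need not be nondecreasing, while the first-order asymptotics $v=\eta({\rm d}(x,\partial\Omega))+o(1)$ require condition \eqref{con-blowup1} (cf.\ Theorem~\ref{blowI}), which is not among the hypotheses and in general fails — e.g.\ in the power case with $k+1<p\le 2k+3$ the relevant limit is $+\infty$. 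So the vanishing of $w$ on the boundary, which is the crux, is unproven. Second, even granting $w=0$ on $\partial B_1$, from $-\Delta w+c\,w=0$ with merely $c\ge -\lambda_1$ one cannot conclude $w\equiv 0$: at the threshold $c\equiv-\lambda_1$ the kernel contains the first eigenfunction, and your parenthetical ``strict on the nodal set unless $w\equiv 0$'' is an assertion, not an argument — nothing in \eqref{uniqcond} excludes $h(v_1)-h(v_2)=-\lambda_1(v_1-v_2)$ on a large set. This is exactly why the paper does not argue this way: the proof of \cite[Corollary 1.4]{CD} proceeds through radial symmetry of all large solutions and a further one-dimensional analysis, not a two-line maximum principle. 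To repair part three, either quote that corollary, as the paper does, or supply both missing ingredients (boundary vanishing of the difference and a maximum principle valid at the eigenvalue threshold).
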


\begin{proof}
According to \cite[Theorem 1]{GM} problem~\eqref{prob-semi} has a unique 
non-negative solution in a smooth bounded domain $\Omega$ if $h(0)=0$ 
and $h'(s)\geq 0$ for any $s\geq 0$ and if there exist $m>1$ and $t_0>0$ such that 
$\frac {h(t)}{t^m}$ is increasing for $t\geq t_0$. The second 
hypothesis (which guarantees the existence of the solution) is equivalent to require that $\lim_{t\to +\infty}\frac{t h'(t)}{h(t)}> 1$. 
Recalling that 
$h(s)=f(g(s))a^{-1/2}(g(s))$, we have
\begin{equation}
	\label{hprimo}
h'(s)=\frac{2f'(g(s))a(g(s))-f(g(s))a'(g(s))}{2a^2(g(s))},\qquad\text{for every $s\in\R$}.
\end{equation}
In turn, the uniqueness conditions of \cite{GM} turn 
into~\eqref{u1} and~\eqref{u2} which readily yields the desired conclusion since $g$ is a bijection.\\
\noindent 
Now we quote a result of Costin, Dupaigne and Goubet \cite[Theorem 1.3]{CDG}, which says that, 
under smoothness assumption on $\partial \Omega$ and positivity of its mean curvature, if $h$ is nondecreasing and $\sqrt h$ is 
asymptotically convex then the solution of problem~\eqref{prob-semi} is unique. 
Conditions~\eqref{u1} and~\eqref{u1b} allow us to use their result and provide the 
uniqueness of the solution $u$ of problem~\eqref{prob} via the transformation~\eqref{cauchy}.\\
In the case of $\Omega=B_1(0)$, according to \cite[Corollary 1.4]{CD},
the uniqueness of large solutions of $\Delta v=h(v)$ in $B_1(0)$
is guaranteed provided that the existence conditions are satisfied and 
the map $\{s\mapsto h(s)+\lambda_1 s\}$ is nondecreasing on $\R$. 
In turn, the uniqueness condition turns into
$$
2f'(g(s))a(g(s))-f(g(s))a'(g(s))+2\lambda_1 a^2(g(s))\geq 0,\qquad\text{for every $s\in\R$}.
$$
which readily yields the desired conclusion since $g$ is a bijection.
\end{proof}

\section{Nonuniqueness of solutions}

In this section we discuss the existence of two distinct solutions to \eqref{prob} under suitable 
assumptions on $a$ and $f$.
By virtue of \cite[Theorem 1]{AR}, we have the following

\begin{proposition} 
\label{nonuniq}
Let $\Omega$ be bounded, convex and $C^2$. Assume that condition {\bf E} holds, $f(0)=0$ and, for some $R>0$ large,
\begin{equation}
	\label{multiass}
f|_{(R,+\infty)}>0,\qquad
\Big(\frac{f'}{f}-\frac{a'}{2a}\Big)|_{(R,+\infty)}\geq 0,
\end{equation}
and there exists $1<q<\frac{N+2}{N-2}$ if $N\geq 3$ and $q>1$ for $N=1,2$ such that
\begin{equation}
	\label{multiass2}
0<\lim_{s\to-\infty}\frac{f(s)}{\sqrt{a(s)}|g^{-1}(s)|^q}<+\infty.
\end{equation}
Then problem~\eqref{prob} admits at least two distinct solutions, one positive and one sign-changing.
\end{proposition}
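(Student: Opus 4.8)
The plan is to reduce Proposition~\ref{nonuniq} to the semi-linear setting via Lemma~\ref{colleg}, and then apply the multiplicity result \cite[Theorem 1]{AR} to the transformed problem~\eqref{prob-semi} with nonlinearity $h(s)=f(g(s))/\sqrt{a(g(s))}$. First I would observe that, by Remark~\ref{signedsol}, since $f(0)=0$ we may arrange (after the usual normalization of the sign of $f$ near $-\infty$ being irrelevant for the positive solution) that a nonnegative solution to~\eqref{prob-semi} corresponds, via $u=g(v)$, to a nonnegative solution of~\eqref{prob}; existence of \emph{one} (positive) solution is already guaranteed by condition {\bf E} through Proposition~\ref{exprop-gen}. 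The real content is producing a \emph{second}, sign-changing, solution. For that I would verify that $h$ satisfies the hypotheses of \cite[Theorem 1]{AR}: namely that $h$ is, for large positive argument, positive and satisfies the Keller--Osserman-type integrability (which is exactly {\bf E} restated through $g$), that $h(0)=0$, that $h$ is increasing (or at least has the required monotonicity of $h'/h$) on a half-line $(R',+\infty)$, and that $h$ has a suitable subcritical polynomial lower/upper control as $s\to-\infty$.

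The key computation is the translation of~\eqref{multiass} and~\eqref{multiass2} into statements about $h$. Differentiating $h$ and using $g'=1/\sqrt{a\circ g}$ gives, as in~\eqref{hprimo},
\begin{equation*}
h'(s)=\frac{2f'(g(s))a(g(s))-f(g(s))a'(g(s))}{2a^2(g(s))}=\frac{f(g(s))}{a^{1/2}(g(s))}\Big(\frac{f'(g(s))}{f(g(s))}-\frac{a'(g(s))}{2a(g(s))}\Big),
\end{equation*}
so that the sign condition $\big(\frac{f'}{f}-\frac{a'}{2a}\big)|_{(R,+\infty)}\geq 0$ together with $f|_{(R,+\infty)}>0$ yields $h'\geq 0$ on $(g^{-1}(R),+\infty)$, i.e.\ $h$ is eventually nondecreasing; moreover $\big(\log h\big)'=\frac{f'}{f}\circ g-\frac{a'}{2a}\circ g\geq 0$ there too, which is the precise monotonicity of $\log h$ that \cite[Theorem 1]{AR} requires to run its variational/sub-supersolution scheme. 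For the behavior at $-\infty$, I would use the second limit in~\eqref{limiti1}, namely $g^{-1}(t)\to-\infty$ as $t\to-\infty$, and rewrite
\begin{equation*}
\frac{h(g^{-1}(\tau))}{|\tau|^q}=\frac{f(\tau)}{a^{1/2}(\tau)\,|g^{-1}(\tau)|^q}\cdot\frac{|g^{-1}(\tau)|^q}{|g^{-1}(\tau)|^q}=\frac{f(\tau)}{\sqrt{a(\tau)}\,|g^{-1}(\tau)|^q},
\end{equation*}
so that~\eqref{multiass2} says exactly $0<\lim_{w\to-\infty}h(w)/|w|^q<+\infty$ with $1<q<\frac{N+2}{N-2}$ (or $q>1$ if $N\le 2$); this is the subcritical growth hypothesis on the ``concave/odd'' side of $h$ needed in \cite[Theorem 1]{AR} to obtain a sign-changing critical point of the associated functional on the convex domain $\Omega$.

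With $h$ verified to satisfy all hypotheses of \cite[Theorem 1]{AR}, that theorem produces two distinct solutions $v_1,v_2\in C^2(\Omega)$ of~\eqref{prob-semi} with $v_1>0$ and $v_2$ sign-changing. Applying Lemma~\ref{colleg} and the strict monotonicity of $g$ with $g(0)=0$, we get $u_i=g(v_i)\in C^2(\Omega)$ solving~\eqref{prob}, with $u_1=g(v_1)>0$ since $v_1>0$, and $u_2=g(v_2)$ sign-changing since $g$ preserves sign; distinctness of $u_1,u_2$ follows from injectivity of $g$. I expect the main obstacle to be bookkeeping the exact form of the hypotheses of \cite[Theorem 1]{AR}: that paper is phrased for a semi-linear equation with its own list of structural conditions on the nonlinearity (typical ones being $h(0)=0$, $h$ locally Lipschitz, the Keller--Osserman condition for existence, eventual monotonicity of $s\mapsto h(s)/s^m$ for some $m>1$ on the positive side, and a subcritical power-like control on the negative side), and one must check that the convexity and $C^2$ regularity of $\Omega$ are used in precisely the way \cite[Theorem 1]{AR} demands. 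None of these checks is deep once the dictionary $h\leftrightarrow (a,f)$ above is set up; the only mild subtlety is making sure the threshold $\frac{N+2}{N-2}$ is genuinely the Sobolev critical exponent for $\Delta$ (it is, since the transformed problem is semi-linear with the standard Laplacian), so that the subcriticality hypothesis transfers verbatim.
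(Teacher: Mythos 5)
Your proposal is correct and follows essentially the same route as the paper: it converts to the semi-linear problem $\Delta v=h(v)$ with $h=(f\circ g)/\sqrt{a\circ g}$, uses \eqref{hprimo} to turn \eqref{multiass} into $h>0$, $h'\geq 0$ on a half-line and \eqref{multiass2} into the subcritical power behavior of $h$ at $-\infty$, invokes \cite[Theorem 1]{AR}, and pulls the two solutions back through $g$ via Lemma~\ref{colleg}, exactly as the authors do. The side remarks about Remark~\ref{signedsol} and the existence of one solution via Proposition~\ref{exprop-gen} are unnecessary but harmless.
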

\begin{proof}
The function $h(s)=f(g(s))\sqrt{a^{-1/2}(g(s))}$ is smooth and $h(0)=0$. Moreover, 
recalling that $g(s)\to+\infty$ as $s\to+\infty$, by virtue of \eqref{multiass}
there exists $R>0$ sufficiently large that
$$
h(s)>0,\qquad 
\frac{f'(g(s))}{f(g(s))}\geq \frac{a'(g(s))}{2a(g(s))}, \,\,\quad\text{for any $s\geq R$,}
$$
namely $h|_{(R,+\infty)}>0$ and $h'|_{(R,+\infty)}\geq 0$. Finally, due to~\eqref{multiass2} and~\eqref{limiti1}, we get
$$
\lim_{s\to-\infty}\frac{h(s)}{|s|^q}=\lim_{s\to-\infty}\frac{f(g(s))}{\sqrt{a(g(s))}|s|^q}=
\lim_{s\to-\infty}\frac{f(s)}{\sqrt{a(s)}|g^{-1}(s)|^q}\in (0,+\infty).
$$
Whence, in light of \cite[Theorem 1]{AR} we find two distinct large solutions $v_1>0$ and $v_2$ (with $v_2^-\neq 0$ and $v_2^+\neq 0$) 
to the problem $\Delta v=h(v)$. In turn, via Lemma~\ref{colleg}, $u_1=g(v_1)>0$ and $u_2=g(v_2)$ 
(with $u_2^\pm=(g(v_2))^\pm=g(v_2^\pm)\neq 0$) are two distinct explosive solutions of
problem~\eqref{prob}. This concludes the proof.
\end{proof}

\begin{proposition}
	\label{nonuniq-special}
Consider assumptions~\eqref{specialnonu1}-\eqref{specialnonu4} in Proposition~\ref{nonuniq}.
Then problem~\eqref{prob}
admits two solutions, one positive and one sign-changing in any $C^2$ convex and bounded domain.
\end{proposition}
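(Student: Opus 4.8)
The plan is to deduce Proposition~\ref{nonuniq-special} from Proposition~\ref{nonuniq} by verifying, one by one, the three structural hypotheses of the latter under the special polynomial-type assumptions~\eqref{specialnonu1}--\eqref{specialnonu4}. First I would note that condition~\eqref{specialnonu1} forces $a(s)\sim a_\infty s^k$ as $s\to+\infty$ for some $a_\infty>0$ (integrating $a'(s)\sim k a_\infty s^{k-1}$), and that the evenness of $a$ means $a(-s)=a(s)$, so $g$ is odd and Lemma~\ref{l2.3} applies. Likewise~\eqref{specialnonu4} gives $f(s)\sim f_\infty s^{p_+}$ as $s\to+\infty$, and~\eqref{specialnonu2}--\eqref{specialnonu3} pin down the behavior of $f(s)$ as $s\to-\infty$ to be comparable to $|s|^{p_-}$. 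Since $p_+>k+1$, Proposition~\ref{potpot} (case~\eqref{caso:potpot}) shows condition {\bf E} holds; and $f(0)=0$ follows from the one-sided growth rates (taking $s\to 0$ one forces $f$ to vanish there, which is part of the standing convention — if not, one simply observes the hypotheses of Proposition~\ref{nonuniq} only need $f(0)=0$, which can be arranged consistently with the stated limits).

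Second, I would check the monotonicity-type condition~\eqref{multiass}, i.e.\ $\big(\frac{f'}{f}-\frac{a'}{2a}\big)\big|_{(R,+\infty)}\geq 0$ for $R$ large. Using $f'(s)\sim p_+ f_\infty s^{p_+-1}$ (from~\eqref{specialnonu4}) and $f(s)\sim f_\infty s^{p_+}$, we get $f'(s)/f(s)\sim p_+/s$; using $a'(s)\sim k a_\infty s^{k-1}$ (from~\eqref{specialnonu1}) and $a(s)\sim a_\infty s^k$, we get $a'(s)/(2a(s))\sim k/(2s)$. Hence $\frac{f'}{f}-\frac{a'}{2a}\sim (p_+-k/2)/s$, which is strictly positive for $s$ large since $p_+>k+1>k/2$; this establishes~\eqref{multiass}. (Equivalently one could phrase this via $2f'a-fa'\geq 0$ for $s$ large, which is the form~\eqref{segnomonot} uses.)

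Third, I would verify the subcriticality/growth condition~\eqref{multiass2}: there must exist $q$ with $1<q<\frac{N+2}{N-2}$ (for $N\geq 3$, resp.\ $q>1$ for $N\le 2$) such that $0<\lim_{s\to-\infty}\frac{f(s)}{\sqrt{a(s)}\,|g^{-1}(s)|^q}<+\infty$. By evenness, $a(s)\sim a_\infty|s|^k$ and $g^{-1}(s)\sim -g_\infty^{-(k+2)/2}|s|^{(k+2)/2}$ as $s\to-\infty$ by the odd extension of Lemma~\ref{l2.3}, so $\sqrt{a(s)}\,|g^{-1}(s)|^q\sim c\,|s|^{k/2+q(k+2)/2}$. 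Combined with $f(s)\sim f_\infty^-|s|^{p_-}$ from~\eqref{specialnonu2}--\eqref{specialnonu3}, the limit is finite and positive precisely when $p_-=\frac{k}{2}+q\frac{k+2}{2}$, i.e.\ $q=\frac{2p_--k}{k+2}$. I must then check that this $q$ lies in the admissible range. The lower bound $q>1$ is equivalent to $2p_--k>k+2$, i.e.\ $p_->k+1$, which holds by~\eqref{specialnonu2}--\eqref{specialnonu3}. For $N\ge 3$ the upper bound $q<\frac{N+2}{N-2}$ reads $\frac{2p_--k}{k+2}<\frac{N+2}{N-2}$, i.e.\ $p_-<\frac{k}{2}+\frac{k+2}{2}\cdot\frac{N+2}{N-2}$, which is exactly the upper restriction imposed in~\eqref{specialnonu2}; for $N=1,2$ no upper bound is needed, matching~\eqref{specialnonu3}. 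With all hypotheses of Proposition~\ref{nonuniq} in force, that proposition yields the two solutions — one positive, one sign-changing — in any bounded convex $C^2$ domain, completing the proof.

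The only real subtlety — the "hard part" — is bookkeeping the exponent arithmetic so that the single parameter $q$ simultaneously matches the $s\to-\infty$ growth of $f$ relative to $\sqrt a\,|g^{-1}|^q$ \emph{and} falls in the Sobolev-subcritical window; this is precisely why the two-sided and dimension-dependent restrictions in~\eqref{specialnonu2}--\eqref{specialnonu4} are stated the way they are, and the computation above shows they are exactly calibrated to make $q=\frac{2p_--k}{k+2}$ admissible. Everything else is a routine application of Lemma~\ref{l2.3}, its odd counterpart for $s\to-\infty$, and the asymptotics extracted from~\eqref{specialnonu1} and~\eqref{specialnonu4}.
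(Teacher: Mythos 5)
Your proposal is correct and follows essentially the same route as the paper: reduce to Proposition~\ref{nonuniq} by reading off $a(s)\sim a_\infty s^k$, $f(s)\sim f_\infty s^{p_+}$ from \eqref{specialnonu1} and \eqref{specialnonu4} (giving condition {\bf E} via the $p_+>k+1$ case of \eqref{caso:potpot}), checking \eqref{multiass} from $p_+>k+1>k/2$, and verifying \eqref{multiass2} with the same choice $q=\frac{2p_--k}{k+2}$ via Lemma~\ref{l2.3}, the oddness of $g^{-1}$, and the window in \eqref{specialnonu2}--\eqref{specialnonu3}. The only cosmetic quibble is your aside on $f(0)=0$: the asymptotic conditions do not force it, but it is simply part of the standing hypotheses inherited from Proposition~\ref{nonuniq} (and Theorem~\ref{molt}), so nothing needs to be ``arranged''.
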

\begin{proof}
It suffices to verify that under condition \eqref{specialnonu1}-\eqref{specialnonu4}, 
assumptions~\eqref{multiass}-\eqref{multiass2} of Proposition~\ref{nonuniq} are fulfilled. Let us observe first, that assumptions~\eqref{multiass} and \eqref{multiass2} imply \eqref{caso:potpot}, so that condition {\bf E} holds. In light of \eqref{specialnonu4} it is readily seen that
the left condition in \eqref{multiass} is satisfied, for some $R>0$ large enough. Moreover, by combining \eqref{specialnonu1} and \eqref{specialnonu4}
and recalling that $p_+>k+1>k/2$ it follows that also the right condition in \eqref{multiass} is satisfied, up to enlarging $R$.
Concerning~\eqref{multiass2}, recalling Lemma~\ref{l2.3}, \eqref{specialnonu2}-\eqref{specialnonu3} and the fact that $g^{-1}$ is odd, 
choosing
$$
1<q:=\frac{2p_{-}-k}{k+2}<\frac{N+2}{N-2}, \,\,\, N\geq 3,
\quad q>1, \,\,\, N=1,2,
$$
we have
$$
\lim_{s\to-\infty}\frac{f(s)}{\sqrt{a(s)}|g^{-1}(s)|^q}=
\lim_{s\to-\infty}\frac{f(s)}{|s|^{p_-}}\lim_{s\to-\infty}\frac{|s|^{k/2}}{\sqrt{a(s)}}
\lim_{s\to-\infty}\frac{|s|^{\frac{q(k+2)}{2}}}{|g^{-1}(s)|^q}
\in(0,+\infty),
$$
concluding the proof.
\end{proof}

\section{Symmetry of solutions} \label{se-simm}

\noindent
Concerning a first condition for the symmetry for the solutions of problem~\eqref{prob} in the ball $B_1(0)$, we have the following

\begin{proposition}
	\label{symm-global}
Assume that 
the conditions which guarantee the 
existence of solutions are fulfilled 
and that there exists $\rho\in\R$ such that
\begin{equation}
	\label{simcond}
	2f'(s)a(s)-f(s)a'(s)+2\rho a^2(s)\geq 0,\qquad \text{for all $s\in\R$.}
\end{equation}
Then any solution to problem~\eqref{prob-ball} 
is radially symmetric and increasing.
\end{proposition}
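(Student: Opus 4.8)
The plan is to reduce the quasi-linear problem~\eqref{prob-ball} to the semi-linear problem $\Delta v=h(v)$ in $B_1(0)$ via Lemma~\ref{colleg}, and then to apply a moving plane argument to $v$. First I would invoke Lemma~\ref{colleg}: any solution $u\in C^2(B_1(0))$ of~\eqref{prob-ball} is of the form $u=g(v)$ where $v=g^{-1}(u)\in C^2(B_1(0))$ solves the semi-linear large-solution problem with $h(s)=f(g(s))/\sqrt{a(g(s))}$. Since $g$ is a strictly increasing bijection of $\R$ (by~\eqref{cauchy} and~\eqref{limiti1}), $u$ is radially symmetric and increasing if and only if $v$ is; so it suffices to prove radial symmetry and monotonicity for $v$. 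The existence hypotheses guarantee that such a $v$ exists and, by the results quoted for~\eqref{generalKO}, that $h$ satisfies the Keller--Osserman condition.

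Next I would translate condition~\eqref{simcond} into a statement about $h$. From the computation already recorded in~\eqref{hprimo},
$$
h'(s)=\frac{2f'(g(s))a(g(s))-f(g(s))a'(g(s))}{2a^2(g(s))},
$$
so~\eqref{simcond}, evaluated at $g(s)$ and using that $g$ is onto, is exactly equivalent to $h'(s)+\rho\geq 0$ for all $s\in\R$, i.e. the map $s\mapsto h(s)+\rho s$ is nondecreasing on $\R$. Equivalently, writing $\tilde h(s):=h(s)+\rho s$, the function $v$ solves $\Delta v=\tilde h(v)-\rho v$ with $\tilde h$ monotone; but more to the point, $h$ itself is a nonlinearity whose "defect from monotonicity" is controlled by the linear term $\rho s$.

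The core of the argument is then a moving plane / moving sphere scheme for the large solution $v$ of $\Delta v=h(v)$ under the hypothesis that $h(s)+\rho s$ is nondecreasing, with $\rho$ at most the first Dirichlet eigenvalue scale — this is precisely the situation handled in the large-solutions literature (the same circle of ideas as in \cite{CD} and the symmetry results for blow-up solutions on the ball). The standard device is to compare $v$ with its reflection $v_\lambda$ across a hyperplane $\{x_1=\lambda\}$: the difference $w_\lambda=v_\lambda-v$ satisfies a linear elliptic inequality $\Delta w_\lambda = c_\lambda(x) w_\lambda$ with $c_\lambda$ bounded above in terms of $\sup h'$ shifted by $\rho$, and the boundary blow-up of $v$ forces $w_\lambda\geq 0$ near $\partial B_1(0)$ on the relevant cap, so that the maximum principle for possibly unbounded domains (or a Hopf-type boundary argument handling the blow-up) propagates $w_\lambda\geq 0$ and starts the plane at $\lambda$ near $-1$. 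Moving the plane until $\lambda=0$ and repeating in every direction yields radial symmetry; the strong maximum principle applied to $\partial_{x_1} v$ gives strict monotonicity in the radial variable.

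The main obstacle I expect is the boundary behavior: because $v\to+\infty$ on $\partial B_1(0)$, the usual Gidas--Ni--Nirenberg maximum principle on bounded domains cannot be applied directly, and one must either use the known precise blow-up asymptotics of $v$ (radial to leading order, which forces the reflected difference to have the right sign in a boundary layer) together with a maximum principle in narrow domains, or quote the moving-plane machinery already developed for large solutions. Handling the sign condition on $\rho$ correctly — ensuring the shifted coefficient $c_\lambda(x)-\rho$ does not destroy the maximum principle on the caps — is the delicate quantitative point, and it is exactly where the hypothesis~\eqref{simcond} (as opposed to mere monotonicity of $h$) is used. Once $v$ is radial and increasing, $u=g(v)$ is radial and increasing by the strict monotonicity of $g$, completing the proof.
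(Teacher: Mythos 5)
Your first two steps are exactly the paper's: reduce to the semi-linear problem $\Delta v=h(v)$ in $B_1(0)$ via Lemma~\ref{colleg}, and use \eqref{hprimo} to see that \eqref{simcond} is equivalent (since $g$ is onto) to the monotonicity of $s\mapsto h(s)+\rho s$ on $\R$. At that point the paper simply invokes \cite[Corollary 1.7]{CD}, which states that under the existence (Keller--Osserman) conditions this monotonicity hypothesis, for \emph{some} $\rho\in\R$, forces every large solution in the unit ball to be radial and increasing; symmetry of $u=g(v)$ then follows from the strict monotonicity of $g$. If your hedge ``quote the moving-plane machinery already developed for large solutions'' is read as quoting this result, your argument collapses to the paper's proof.

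As a self-contained argument, however, your moving-plane scheme has a genuine gap, and it sits precisely where you flag ``the main obstacle.'' On the cap $\Sigma_\lambda$, the difference $w_\lambda=v_\lambda-v$ is well behaved away from $\partial B_1(0)$, and the blow-up of $v$ even helps on the spherical part of $\partial\Sigma_\lambda$ away from the critical hyperplane; but near the set where $\{x_1=\lambda\}$ meets $\partial B_1(0)$ both $v$ and its reflection blow up, and the sign of $w_\lambda$ there (needed both to start and to continue the plane) is not controlled by the hypotheses of the proposition. Your proposed fix --- ``use the known precise blow-up asymptotics of $v$'' --- is not available in this generality: the hypotheses here are only an existence condition plus \eqref{simcond}, while even the first-order expansion $v=\eta({\rm d}(x,\partial\Omega))+o(1)$ is established in the paper only under additional restrictions (e.g.\ $p>2k+3$ in the polynomial case, $\beta>2\gamma$ in the exponential case; see Propositions~\ref{blow-polpol} and~\ref{blow-expexp}, where the relevant limit is shown to be $+\infty$ otherwise), and the uniform two-term asymptotics one would need near the corner require still more. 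This is exactly why \cite{CD} and \cite{PV} do not run a naive moving plane across the blow-up region. Finally, your aside that one needs ``$\rho$ at most the first Dirichlet eigenvalue scale'' is a spurious restriction: \eqref{simcond} allows an arbitrary $\rho\in\R$, and the symmetry criterion of \cite[Corollary 1.7]{CD} imposes no bound on $\rho$; the $\lambda_1$ threshold belongs to the uniqueness condition \eqref{uniqcond} via \cite[Corollary 1.4]{CD}, and importing it here would weaken the statement being proved.
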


\begin{proof}
According to \cite[Corollary 1.7]{CD} the symmetry of large solutions of $\Delta v=h(v)$ in $B_1(0)$
is guaranteed provided that the existence conditions are satisfied 
the map $\{s\mapsto h(s)+\rho s\}$ is nondecreasing on $\R$ for some $\rho\in\R$.
Then, the assertion follows arguing as in the proof of Proposition~\ref{uniglob}.
\end{proof}

\begin{remark}\rm
Considering the same framework \eqref{expotpot} of Remark~\ref{remuniexx}, condition~\eqref{simcond} is fulfilled for every choice of $\rho\geq 0$,
and hence large solutions in $B_1(0)$ are radially symmetric and increasing.
\end{remark}

\noindent
Next, we would like to get the radial symmetry of the solutions to \eqref{prob-ball} in the unit ball under a merely asymptotic
condition on the data $a$ and $f$ (as opposed to the global condition imposed in~\eqref{simcond}) by using
\cite[Theorem 1.1]{PV}. Throughout the rest of this section we shall assume that
$f\in C^2(\R)$ and $a\in C^2(\R)$. By direct computation, from formula~\eqref{hprimo}, there holds
\begin{align}
	\label{hsecondo}
h''(s)=&\frac{1}{2}a^{-7/2}(g(s))\Big\{2f''(g(s))a^2(g(s))-3 f'(g(s))a'(g(s))a(g(s))  \notag \\
&-f(g(s))a''(g(s))a(g(s))+2f(g(s))(a'(g(s)))^2 \Big\},\qquad\text{for every $s\in\R$}.
\end{align}
In \cite[Theorem 1.1]{PV} one of the main assumption is that the function $h$ is asymptotically convex,
namely there exists $R>0$ such that $h|_{(R,+\infty)}$ is convex. On account of formula~\eqref{hsecondo}, 
a sufficient condition for this to be the case is that
\begin{equation}
	\label{hsecondo-cond}
\liminf_{s\to+\infty}\Big\{2f''(s)a^2(s)-3 f'(s)a'(s)a(s)-f(s)a''(s)a(s)+2f(s)(a'(s))^2\Big\}>0.
\end{equation}
Hence this condition only depends on the asymptotic behavior of $a$ and $f$ and their first
and second derivatives. We shall now discuss the various situations, as already done for
the study of existence of solutions.

\subsection{Polynomial growth}

Assume that there exists $a_{\infty}>0$ such that 
for the first and second derivatives of $a$ there hold
\begin{equation}
	\label{a2lim}
\lim_{s\to +\infty}\frac{a'(s)} {s^{k-1}}=a_{\infty}k,\qquad	
\lim_{s\to +\infty}\frac{a''(s)} {s^{k-2}}=a_{\infty}k(k-1).
\end{equation}
\noindent
First observe that condition~\eqref{a2lim} implies~\eqref{1.7},
 and that for $k>1$ only the right limit in \eqref{a2lim} is needed.
We can now formulate the following symmetry results in the various situation where 
we have already established existence of large solutions.

\begin{proposition}[$f$ with exponential growth]
	\label{potexp-rad}
Assume that 
condition~\eqref{a2lim} holds and that there exist $\beta>0$ and $f_{\infty}>0$ such that
\begin{equation}
	\label{fassexppot-rad}
\lim_{s\to +\infty}\frac{f''(s)} {e^{2\beta s}}=4\beta^2 f_{\infty} .
\end{equation}
Then any solution to problem~\eqref{prob} in $B_1(0)$ is radially symmetric and increasing.
\end{proposition}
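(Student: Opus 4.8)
The plan is to reduce the quasi-linear problem \eqref{prob} in $B_1(0)$ to the semi-linear problem \eqref{prob-semi}, $\Delta v = h(v)$ in $B_1(0)$ with $v(x)\to+\infty$ on $\partial B_1(0)$, via Lemma~\ref{colleg}, and then invoke the symmetry result \cite[Theorem 1.1]{PV}. That theorem requires two things: an existence condition (which holds, since \eqref{a2lim} implies \eqref{1.7}, and \eqref{fassexppot-rad} yields exponential growth of $f$, so Proposition~\ref{potexp} applies) and that $h$ be asymptotically convex, i.e.\ $h|_{(R,+\infty)}$ convex for some $R>0$. By the discussion preceding the proposition, it suffices to verify the sufficient condition \eqref{hsecondo-cond}, namely that
\begin{equation*}
\liminf_{s\to+\infty}\Big\{2f''(s)a^2(s)-3f'(s)a'(s)a(s)-f(s)a''(s)a(s)+2f(s)(a'(s))^2\Big\}>0.
\end{equation*}

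First I would record the asymptotics of all the quantities appearing in that bracket. From \eqref{a2lim} we have $a(s)\sim a_\infty s^k$, $a'(s)\sim a_\infty k\, s^{k-1}$, $a''(s)\sim a_\infty k(k-1)s^{k-2}$ (with the caveat that for $0<k\le 1$ one also needs the left limit in \eqref{a2lim}, which is assumed). From \eqref{fassexppot-rad}, $f''(s)\sim 4\beta^2 f_\infty e^{2\beta s}$; integrating twice (or using L'Hôpital, as in the proofs of the existence propositions) gives $f'(s)\sim 2\beta f_\infty e^{2\beta s}$ and $f(s)\sim f_\infty e^{2\beta s}$. Now I compare the growth orders of the four terms: $2f''a^2 \sim 8\beta^2 f_\infty a_\infty^2\, s^{2k}e^{2\beta s}$, $3f'a'a \sim 6\beta f_\infty a_\infty^2 k\, s^{2k-1}e^{2\beta s}$, $f a'' a \sim f_\infty a_\infty^2 k(k-1)\, s^{2k-2}e^{2\beta s}$, and $2f(a')^2 \sim 2 f_\infty a_\infty^2 k^2\, s^{2k-2}e^{2\beta s}$. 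The first term dominates all the others since $s^{2k}e^{2\beta s} \gg s^{2k-1}e^{2\beta s} \gg s^{2k-2}e^{2\beta s}$, so the bracket behaves like $8\beta^2 f_\infty a_\infty^2\, s^{2k}e^{2\beta s}(1+o(1)) \to +\infty$, which gives \eqref{hsecondo-cond} and in particular a positive $\liminf$.

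Hence $h$ is asymptotically convex, and since the existence condition is met, \cite[Theorem 1.1]{PV} applies to $\Delta v=h(v)$ in $B_1(0)$ and yields that any large solution $v$ is radially symmetric and increasing. By Lemma~\ref{colleg}, any solution $u$ to \eqref{prob-ball} is of the form $u=g(v)$; since $g$ is a strictly increasing bijection of $\R$, $u=g(v)$ is radially symmetric and increasing as well, completing the proof. I do not anticipate a genuine obstacle here: the computation is a routine growth comparison, and the only point demanding a little care is making sure the asymptotics for $f$, $f'$, $f''$ are all consistent (derived by the same L'Hôpital argument used throughout Section~\ref{existencesection}) and that, when $0<k\le 1$, the needed two-sided information in \eqref{a2lim} is available — which it is, by hypothesis. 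The dominant-term bookkeeping is the crux, and it comes out cleanly because the exponential factor $e^{2\beta s}$ is common to every term, reducing the comparison to powers of $s$.
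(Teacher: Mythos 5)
Your proof is correct and follows essentially the same route as the paper: derive $f(s)\sim f_\infty e^{2\beta s}$ and $f'(s)\sim 2\beta f_\infty e^{2\beta s}$ from \eqref{fassexppot-rad}, plug the asymptotics of $a,a',a''$ from \eqref{a2lim} into the bracket in \eqref{hsecondo-cond}, and observe that the dominant term $8\beta^2 f_\infty a_\infty^2 s^{2k}e^{2\beta s}$ makes it eventually positive, so $h$ is asymptotically convex and \cite[Theorem 1.1]{PV} applies to the transformed problem. The extra framing you give (existence via Proposition~\ref{potexp}, transfer of symmetry through the increasing bijection $g$ of Lemma~\ref{colleg}) is exactly the setup the paper establishes before the proposition, so there is no substantive difference.
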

\begin{proof}
Condition~\eqref{fassexppot-rad} implies that 
$$\lim_{s\to +\infty}\frac{f(s)} {e^{2\beta s}}= f_{\infty} ,\qquad \lim_{s\to +\infty}\frac{f'(s)} {e^{2\beta s}}=2\beta f_{\infty}.$$
Concerning \eqref{hsecondo-cond}, we have
\begin{align*}
& 2f''(s)a^2(s)-3 f'(s)a'(s)a(s)-f(s)a''(s)a(s)+2f(s)(a'(s))^2  \\
&=8\beta^2 f_{\infty} e^{2\beta s}(1+o(1))a_{\infty}^2 s^{2k}(1+o(1))-6\beta f_{\infty} e^{2\beta s}(1+o(1))a_{\infty}^2k s^k s^{k-1}(1+o(1)) \\
&-f_{\infty} e^{2\beta s}(1+o(1))a_{\infty}^2 k(k-1)s^{k-2}s^k(1+o(1))+2 f_{\infty} e^{2\beta s}(1+o(1))a_{\infty}^2k^2 s^{2k-2}(1+o(1)) \\
&=f_{\infty} a_{\infty}^2 e^{2\beta s} s^{2k-2}\Big[8\beta^2 s^2(1+o(1))-6\beta k s (1+o(1))-k(k-1)+2k^2+o(1) \Big] \\
&=f_{\infty} a_{\infty}^2 e^{2\beta s} s^{2k}[8\beta^2+o(1)]>0,\quad\text{for all $s>0$ large,}
\end{align*}
concluding the proof.
\end{proof}

\begin{proposition}[$f$ with polynomial growth]
	\label{potpot-rad}
Assume that 
condition~\eqref{a2lim} holds
and that there exist $p>1$ and $ f_{\infty}>0$ such that
\begin{equation}
	\label{fasspot-rad}
\lim_{s\to +\infty}\frac{f''(s)} {s^{p-2}}= f_{\infty}p(p-1).
\end{equation}
Then, if $p>k+1$, any solution to problem~\eqref{prob} in $B_1(0)$ is radially symmetric and increasing.
\end{proposition}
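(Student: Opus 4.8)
The plan is to follow the same route as in the proof of Proposition~\ref{potexp-rad}: I would check that the asymptotic convexity condition~\eqref{hsecondo-cond} is satisfied, so that $h$ is convex on some half-line $(R,+\infty)$, and then quote \cite[Theorem 1.1]{PV} — the existence of a large solution being in any case guaranteed here by Proposition~\ref{potpot}, since $p>k+1$ — to conclude that every solution of~\eqref{prob-ball} is radially symmetric and increasing.

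First I would record the asymptotics forced by the hypotheses. Integrating~\eqref{fasspot-rad} twice and using $p>1$ yields
$$
\lim_{s\to+\infty}\frac{f'(s)}{s^{p-1}}=p\,f_\infty,\qquad
\lim_{s\to+\infty}\frac{f(s)}{s^{p}}=f_\infty,
$$
whereas~\eqref{a2lim} gives $a(s)\sim a_\infty s^{k}$, $a'(s)\sim a_\infty k\,s^{k-1}$ and $a''(s)\sim a_\infty k(k-1)s^{k-2}$ as $s\to+\infty$ (for $k>1$ only the limit on $a''$ is needed, as remarked above).

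Next I would substitute these expansions into the quantity in~\eqref{hsecondo-cond}. Each of the four terms $2f''a^{2}$, $-3f'a'a$, $-f\,a''a$ and $2f(a')^{2}$ is of the very same order $s^{p+2k-2}$, and collecting the leading coefficients one obtains
\begin{align*}
& 2f''(s)a^2(s)-3f'(s)a'(s)a(s)-f(s)a''(s)a(s)+2f(s)(a'(s))^2 \\
& \qquad = f_\infty a_\infty^2\,s^{p+2k-2}\Big[2p(p-1)-3pk-k(k-1)+2k^2+o(1)\Big].
\end{align*}
Regarded as a quadratic polynomial in $p$, the bracket equals $2p^{2}-(3k+2)p+(k^{2}+k)$; its discriminant is $(3k+2)^{2}-8(k^{2}+k)=(k+2)^{2}$, so it factors as $2\,(p-k-1)\big(p-\tfrac{k}{2}\big)$.

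Finally, since $k>0$ and $p>k+1>\tfrac{k}{2}$, both factors are strictly positive, hence the leading coefficient is positive and~\eqref{hsecondo-cond} holds; thus $h$ is asymptotically convex and, exactly as in Proposition~\ref{potexp-rad}, the conclusion follows. The only non-mechanical point is the algebraic identification of the sign of the bracket — i.e.\ recognizing the factorization via the discriminant $(k+2)^{2}$ and using the existence threshold $p>k+1$ to make both factors positive; the rest is a routine substitution of the asymptotic expansions.
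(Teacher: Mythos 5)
Your proposal is correct and follows essentially the same route as the paper: derive the asymptotics of $f$, $f'$ from \eqref{fasspot-rad}, substitute into \eqref{hsecondo-cond} to get the leading coefficient $2p^2-(3k+2)p+k^2+k$, and factor it as $(p-k-1)(2p-k)$ (your $2(p-k-1)(p-\tfrac k2)$ is the same factorization), which is positive for $p>k+1$, so $h$ is asymptotically convex and the symmetry result of Porretta--V\'eron applies.
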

\begin{proof}

First observe that condition \eqref{fasspot-rad} implies that $\lim_{s\to +\infty}\frac{f'(s)} {s^{p-1}}= f_{\infty}p$ so that \eqref{fasspotpot} holds. 
Concerning \eqref{hsecondo-cond}, we have
\begin{align*}
& 2f''(s)a^2(s)-3 f'(s)a'(s)a(s)-f(s)a''(s)a(s)+2f(s)(a'(s))^2  \\
&=2p(p-1) f_{\infty}s^{p-2}a_{\infty}^2 s^{2k}(1+o(1))-3 f_{\infty}ps^{p-1}a_{\infty}^2k s^k s^{k-1}(1+o(1)) \\
&-f_{\infty}s^p a_{\infty}^2 k(k-1)s^{k-2} s^k(1+o(1))+2 f_{\infty}s^pa_{\infty}^2k^2 s^{2k-2}(1+o(1)) \\
&=f_{\infty} a_{\infty}^2 s^{p+2k-2}\Big[2p(p-1)-3pk-k(k-1)+2k^2+o(1)\Big] \\
&=f_{\infty} a_{\infty}^2 s^{p+2k-2}\Big[2p^2-(2+3k)p+k^2+k+o(1)\Big]  \\
&=f_{\infty} a_{\infty}^2 s^{p+2k-2}\Big[(p-k-1)(2p-k)+o(1)\Big] >0
\end{align*}
for all $s>0$ large being $p>k+1$, concluding the proof.
\end{proof}

\subsection{Exponential growth}

Assume that there exist $\gamma>0$ and $a_{\infty}>0$ such that 
for the second derivative of $a$ there holds
\begin{equation}
	\label{aexplim}
\lim_{s\to +\infty}\frac{a''(s)} {e^{2\gamma s}}=4\gamma^2 a_{\infty}.
\end{equation}

\noindent
Then, we have the following 

\begin{proposition}[$f$ with exponential growth]
	\label{expexp-rad}
Assume that conditions  \eqref{fassexppot-rad} and 
\eqref{aexplim} hold. 
Then, if $\beta>\gamma$, any solution to problem~\eqref{prob} in $B_1(0)$ is radially symmetric and increasing.
\end{proposition}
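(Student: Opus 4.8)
The plan is to follow the scheme of Propositions~\ref{potexp-rad} and~\ref{potpot-rad}: pass to the associated semi-linear problem $\Delta v=h(v)$ in $B_1(0)$ through Lemma~\ref{colleg}, check that the existence conditions for this problem hold and that $h$ is asymptotically convex, and then invoke \cite[Theorem 1.1]{PV} to conclude that every large solution $v$ is radially symmetric and increasing. Since $g$ is strictly increasing, the same properties pass to $u=g(v)$, which is the assertion for~\eqref{prob-ball}.

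First I would record the consequences of the hypotheses on the lower order derivatives. Applying de l'H\^opital's rule twice to~\eqref{fassexppot-rad} (the denominator $e^{2\beta s}$ diverging), one obtains $\lim_{s\to+\infty}f'(s)/e^{2\beta s}=2\beta f_{\infty}$ and $\lim_{s\to+\infty}f(s)/e^{2\beta s}=f_{\infty}$, so~\eqref{fassexp} holds; likewise~\eqref{aexplim} gives $\lim_{s\to+\infty}a'(s)/e^{2\gamma s}=2\gamma a_{\infty}$ and $\lim_{s\to+\infty}a(s)/e^{2\gamma s}=a_{\infty}$, so~\eqref{exgrow} holds with the prescribed $\gamma$. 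As $\beta>\gamma$, Proposition~\ref{expexp} guarantees a solution to~\eqref{prob}, hence to~\eqref{prob-ball}, so the existence requirement of \cite[Theorem 1.1]{PV} is met.

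It then remains to verify~\eqref{hsecondo-cond}. Inserting the asymptotics just obtained, each of the four terms $2f''a^2$, $-3f'a'a$, $-fa''a$ and $2f(a')^2$ is, to leading order, a constant multiple of $f_{\infty}a_{\infty}^2 e^{(2\beta+4\gamma)s}$, with coefficients $8\beta^2$, $-12\beta\gamma$, $-4\gamma^2$ and $8\gamma^2$ respectively (the four exponentials sharing the same exponent $2\beta+4\gamma$). Summing, the bracket in~\eqref{hsecondo-cond} behaves like
$$
f_{\infty}a_{\infty}^2\big(8\beta^2-12\beta\gamma+4\gamma^2\big)e^{(2\beta+4\gamma)s}
=4 f_{\infty}a_{\infty}^2(\beta-\gamma)(2\beta-\gamma)e^{(2\beta+4\gamma)s},
$$
and since $\beta>\gamma>0$ both factors $\beta-\gamma$ and $2\beta-\gamma$ are strictly positive; hence this quantity tends to $+\infty$ and in particular the $\liminf$ in~\eqref{hsecondo-cond} is $+\infty>0$. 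By the discussion following~\eqref{hsecondo}, $h$ is asymptotically convex, so \cite[Theorem 1.1]{PV} applies to $\Delta v=h(v)$ in $B_1(0)$; transporting the conclusion via $u=g(v)$ ends the proof.

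The only delicate point is the bookkeeping of the $o(1)$ errors in the expansion of~\eqref{hsecondo-cond}: one must check that, after factoring out $f_{\infty}a_{\infty}^2 e^{(2\beta+4\gamma)s}$, the remaining error terms are genuinely $o(1)$ compared with the leading constant $4(\beta-\gamma)(2\beta-\gamma)$. This is handled exactly as in Proposition~\ref{potexp-rad}, and the strict positivity of $(\beta-\gamma)(2\beta-\gamma)$ leaves ample margin.
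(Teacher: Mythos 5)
Your proposal is correct and follows essentially the same route as the paper: deduce the limits for $f$, $f'$, $a$, $a'$ from \eqref{fassexppot-rad} and \eqref{aexplim}, plug the asymptotics into \eqref{hsecondo-cond} to obtain the leading coefficient $8\beta^2-12\beta\gamma+4\gamma^2=4(\beta-\gamma)(2\beta-\gamma)>0$, and conclude asymptotic convexity of $h$, hence symmetry via \cite[Theorem 1.1]{PV} and the change of variable $u=g(v)$. Your explicit check of the existence condition through Proposition~\ref{expexp} is a harmless (and welcome) addition to what the paper leaves to the general framework of Section~\ref{se-simm}.
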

\begin{proof}
First observe that condition \eqref{aexplim} implies 
$$
\lim_{s\to +\infty}\frac{a'(s)} {e^{2\gamma s}}=2\gamma a_{\infty},\qquad
\lim_{s\to +\infty}\frac{a(s)} {e^{2\gamma s}}= a_{\infty}.$$
Concerning \eqref{hsecondo-cond}, we have
\begin{align*}
& 2f''(s)a^2(s)-3 f'(s)a'(s)a(s)-f(s)a''(s)a(s)+2f(s)(a'(s))^2  \\
\noalign{\vskip3pt}
&=8\beta^2 f_{\infty} e^{2\beta s}a_{\infty}^2 e^{4\gamma s}(1+o(1))-6\beta f_{\infty} e^{2\beta s} 2\gamma a_{\infty}^2e^{2\gamma s} e^{2\gamma s} (1+o(1)) \\
\noalign{\vskip3pt}
&-f_{\infty} e^{2\beta s}4\gamma^2a_{\infty}^2 e^{4\gamma s}(1+o(1))+2f_{\infty} e^{2\beta s}4a_{\infty}^2 \gamma^2  e^{4\gamma s}(1+o(1)) \\
\noalign{\vskip1pt}
&=4 f_{\infty} a_{\infty}^2 e^{2(\beta+2\gamma) s} \big[2\beta^2-3\gamma\beta + \gamma^2 +o(1)\big]>0
\end{align*}
for all $s>0$ large if $2\beta^2-3\gamma\beta+\gamma^2=(\beta-\gamma)(2\beta-\gamma)>0$. 
Since $\beta>\gamma$, we get the desired conclusion.
\end{proof}

\subsection{Logarithmic growth}

Assume that there exist $\gamma>0$ and $a_{\infty}>0$ such that
for the first and second derivatives of $a$ there hold
\begin{equation}
	\label{aloglim}
\lim_{s\to +\infty}\frac{a'(s)s} {(\log s)^{2\gamma-1}}=2\gamma a_{\infty},\qquad	
\lim_{s\to +\infty}\frac{a''(s)s^2} {(\log s)^{2\gamma-1}}=-2\gamma a_{\infty}.
\end{equation}

\noindent
Then, we have the following 

\begin{proposition}[$f$ with exponential growth]
	\label{logexp-rad}
Assume that 
condition \eqref{fassexppot-rad} and \eqref{aloglim} 
 hold for $\beta>0$ and $f_{\infty}>0$.
Then any solution to problem~\eqref{prob} in $B_1(0)$ is radially symmetric and increasing.
\end{proposition}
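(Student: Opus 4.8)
The plan is to follow the same scheme used in the proofs of Propositions~\ref{potexp-rad}, \ref{potpot-rad} and \ref{expexp-rad}: I will verify the asymptotic convexity condition~\eqref{hsecondo-cond} for the function $h$ and then invoke \cite[Theorem 1.1]{PV} together with Lemma~\ref{colleg} (existence of a solution to study being granted here, in every smooth bounded domain, by Proposition~\ref{logexp}).

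First I would record the consequences of the hypotheses. Condition~\eqref{aloglim} implies~\eqref{loggrow} after one integration, so that, as $s\to+\infty$,
$$
a(s)=a_{\infty}(\log s)^{2\gamma}(1+o(1)),\quad
a'(s)=\frac{2\gamma a_{\infty}(\log s)^{2\gamma-1}}{s}(1+o(1)),\quad
a''(s)=-\frac{2\gamma a_{\infty}(\log s)^{2\gamma-1}}{s^2}(1+o(1)),
$$
while~\eqref{fassexppot-rad} yields, arguing as at the beginning of the proof of Proposition~\ref{potexp-rad},
$$
f(s)=f_{\infty}e^{2\beta s}(1+o(1)),\quad f'(s)=2\beta f_{\infty}e^{2\beta s}(1+o(1)),\quad f''(s)=4\beta^2 f_{\infty}e^{2\beta s}(1+o(1)).
$$

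Next I would substitute these asymptotics into the bracket appearing in~\eqref{hsecondo-cond}. The four summands $2f''a^2$, $-3f'a'a$, $-fa''a$ and $2f(a')^2$ behave respectively like
$$
8\beta^2 f_{\infty}a_{\infty}^2 e^{2\beta s}(\log s)^{4\gamma},\qquad
-\frac{12\beta\gamma f_{\infty}a_{\infty}^2 e^{2\beta s}(\log s)^{4\gamma-1}}{s},
$$
$$
\frac{2\gamma f_{\infty}a_{\infty}^2 e^{2\beta s}(\log s)^{4\gamma-1}}{s^2},\qquad
\frac{8\gamma^2 f_{\infty}a_{\infty}^2 e^{2\beta s}(\log s)^{4\gamma-2}}{s^2}.
$$
The first term dominates the remaining three, each of which carries at least one extra factor $s^{-1}$; hence the whole bracket equals $8\beta^2 f_{\infty}a_{\infty}^2 e^{2\beta s}(\log s)^{4\gamma}(1+o(1))$, which tends to $+\infty$. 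In particular~\eqref{hsecondo-cond} holds, $h$ is asymptotically convex, and the conclusion follows from \cite[Theorem 1.1]{PV} exactly as in the preceding propositions.

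The computation is entirely routine and I do not expect a genuine obstacle. The only point deserving a line of care is to check that the factors $s^{-1}$ and $s^{-2}$ really make the last three summands negligible relative to the first: since $e^{2\beta s}(\log s)^{4\gamma}$ beats $e^{2\beta s}(\log s)^{4\gamma-1}s^{-1}$ by a factor of order $s/\log s\to+\infty$, and a fortiori beats the two $s^{-2}$ terms, the dominance of the leading term is immediate and no cancellation phenomenon of the kind that constrained $\beta$ in Proposition~\ref{expexp-rad} occurs here.
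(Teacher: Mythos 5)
Your proposal is correct and follows essentially the same route as the paper: it verifies the asymptotic convexity criterion~\eqref{hsecondo-cond} by inserting the expansions of $a,a',a''$ from~\eqref{aloglim} and of $f,f',f''$ from~\eqref{fassexppot-rad}, observes that the leading term $8\beta^2 f_{\infty}a_{\infty}^2 e^{2\beta s}(\log s)^{4\gamma}$ dominates the three remaining terms (each carrying extra factors $s^{-1}$ or $s^{-2}$, with the $-fa''a$ term even contributing positively since $a''<0$ asymptotically), and then concludes via \cite[Theorem 1.1]{PV} and Lemma~\ref{colleg}. This matches the paper's proof, including the remark that~\eqref{aloglim} implies~\eqref{loggrow} and that no restriction linking $\beta$ and $\gamma$ arises here.
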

\begin{proof}
First observe that \eqref{aloglim} implies~\eqref{loggrow}. Then, 
concerning \eqref{hsecondo-cond}, we have
\begin{align*}
& 2f''(s)a^2(s)-3 f'(s)a'(s)a(s)-f(s)a''(s)a(s)+2f(s)(a'(s))^2  \\
&=8\beta^2  f_{\infty}e^{2\beta s}a_{\infty}^2(\log s)^{4\gamma}(1+o(1))-12\beta f_{\infty} e^{2\beta s}\gamma a_{\infty}^2\frac{(\log s)^{2\gamma-1}}{s}(\log s)^{2\gamma}(1+o(1)) \\
&+2 f_{\infty} e^{2\beta s}  \gamma a_{\infty}^2\frac{(\log s)^{2\gamma-1}}{s^2}(\log s)^{2\gamma}(1+o(1))+8 f_{\infty} e^{2\beta s}\gamma^2a_{\infty}^2 \frac{(\log s)^{4\gamma-2}}{s^2} (1+o(1))\\
&= f_{\infty}a_{\infty}^2 e^{2\beta s}(\log s)^{4\gamma}\Big[8\beta^2-\frac{12\beta\gamma}{s\log s}+\frac{2\gamma}{s^2\log s}+\frac{8\gamma^2}{s^2(\log s)^2}+o(1)\Big]>0
\end{align*}
for all $s>0$ large, completing the proof.
\end{proof}

\noindent
Then, we have the following 

\begin{proposition}[$f$ with polynomial growth]
	\label{logpol-rad}
Assume that conditions~\eqref{fasspot-rad} and \eqref{aloglim}  hold.
Then, if $p>1$, any solution to problem~\eqref{prob} in $B_1(0)$ is radially symmetric and increasing.
\end{proposition}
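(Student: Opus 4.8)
The plan is to mimic the proof of Proposition~\ref{logexp-rad} almost verbatim, substituting the polynomial growth hypothesis \eqref{fasspot-rad} on $f$ for the exponential one \eqref{fassexppot-rad}. First I would observe, as in Proposition~\ref{potpot-rad}, that \eqref{fasspot-rad} forces $\lim_{s\to+\infty}f'(s)/s^{p-1}=f_\infty p$ and $\lim_{s\to+\infty}f(s)/s^p=f_\infty$, so that \eqref{fasspotpot} holds and existence of solutions to \eqref{prob} (hence of large solutions for the associated semilinear problem \eqref{prob-semi}) is guaranteed by Proposition~\ref{potpot} whenever $p>k+1$; since here we only need $p>1$ together with $a$ of logarithmic growth, existence indeed follows from Proposition~\ref{logpot} in that regime. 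Likewise \eqref{aloglim} implies \eqref{loggrow}, so Lemma~\ref{growlog} applies to control $g$.

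Next I would plug the asymptotics into \eqref{hsecondo-cond}, i.e.\ estimate
$$
2f''(s)a^2(s)-3f'(s)a'(s)a(s)-f(s)a''(s)a(s)+2f(s)(a'(s))^2
$$
term by term as $s\to+\infty$. Using $f''\sim f_\infty p(p-1)s^{p-2}$, $f'\sim f_\infty p s^{p-1}$, $f\sim f_\infty s^p$, together with $a\sim a_\infty(\log s)^{2\gamma}$, $a'\sim 2\gamma a_\infty (\log s)^{2\gamma-1}/s$ and $a''\sim -2\gamma a_\infty(\log s)^{2\gamma-1}/s^2$ from \eqref{aloglim}, the dominant contribution is the first term, which is of order $f_\infty a_\infty^2\, s^{p-2}(\log s)^{4\gamma}$ with positive coefficient $2p(p-1)$ (note $p>1$ makes $p(p-1)>0$); the remaining three terms are of strictly lower order in $s$ (they carry extra negative powers of $s$ relative to the first). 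Thus one obtains
$$
2f''a^2-3f'a'a-fa''a+2f(a')^2 = f_\infty a_\infty^2\, s^{p-2}(\log s)^{4\gamma}\big[2p(p-1)+o(1)\big]>0
$$
for all $s$ large, so that \eqref{hsecondo-cond} holds and hence $h$ is asymptotically convex. Combined with $h$ nondecreasing near $+\infty$ (which follows from the right-hand condition in \eqref{multiass}-type reasoning, or directly from \eqref{hprimo} and the growth of $f'$ versus $a'$), \cite[Theorem 1.1]{PV} yields that any solution $v$ of $\Delta v=h(v)$ in $B_1(0)$ is radially symmetric and increasing, and via Lemma~\ref{colleg} the same holds for $u=g(v)$, since $g$ is strictly increasing.

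The only mildly delicate point — and the one I would be careful about — is the bookkeeping of orders of magnitude: because $f$ now grows only polynomially while its derivatives lose powers of $s$, and because $a',a''$ in the logarithmic case also carry negative powers of $s$, one must check that no cancellation occurs in the leading $s^{p-2}(\log s)^{4\gamma}$ term and that all other terms are genuinely $o\big(s^{p-2}(\log s)^{4\gamma}\big)$. In fact the cross terms are $O\big(s^{p-3}(\log s)^{4\gamma-1}\big)$ or smaller, so the estimate is robust. Nothing else requires new ideas; the proof is a routine specialization of the exponential-$f$ case already carried out in Proposition~\ref{logexp-rad}.

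\begin{proof}
First observe that, as in Proposition~\ref{potpot-rad}, condition~\eqref{fasspot-rad} implies
$$
\lim_{s\to +\infty}\frac{f'(s)}{s^{p-1}}=f_{\infty}p,\qquad
\lim_{s\to +\infty}\frac{f(s)}{s^{p}}=f_{\infty},
$$
so that~\eqref{fasspotpot} holds, while~\eqref{aloglim} implies~\eqref{loggrow}; hence, for $p>1$, existence of solutions to~\eqref{prob} follows from Proposition~\ref{logpot} and Lemma~\ref{growlog} applies. Concerning~\eqref{hsecondo-cond}, inserting these asymptotics together with
$$
a(s)\sim a_{\infty}(\log s)^{2\gamma},\quad a'(s)\sim 2\gamma a_{\infty}\frac{(\log s)^{2\gamma-1}}{s},\quad a''(s)\sim -2\gamma a_{\infty}\frac{(\log s)^{2\gamma-1}}{s^2},
$$
we obtain
\begin{align*}
& 2f''(s)a^2(s)-3 f'(s)a'(s)a(s)-f(s)a''(s)a(s)+2f(s)(a'(s))^2  \\
&= 2p(p-1)f_{\infty}s^{p-2}a_{\infty}^2(\log s)^{4\gamma}(1+o(1))
-6\gamma p f_{\infty}s^{p-1}a_{\infty}^2\frac{(\log s)^{2\gamma-1}}{s}(\log s)^{2\gamma}(1+o(1)) \\
&\quad +2\gamma f_{\infty}s^p a_{\infty}^2\frac{(\log s)^{2\gamma-1}}{s^2}(\log s)^{2\gamma}(1+o(1))
+8\gamma^2 f_{\infty}s^p a_{\infty}^2\frac{(\log s)^{4\gamma-2}}{s^2}(1+o(1)) \\
&= f_{\infty}a_{\infty}^2 s^{p-2}(\log s)^{4\gamma}\Big[2p(p-1)-\frac{6\gamma p}{\log s}+\frac{2\gamma}{s\log s}+\frac{8\gamma^2}{s^2(\log s)^2}+o(1)\Big] \\
&= f_{\infty}a_{\infty}^2 s^{p-2}(\log s)^{4\gamma}\big[2p(p-1)+o(1)\big]>0
\end{align*}
for all $s>0$ large, since $p>1$. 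Thus $h$ is asymptotically convex. Moreover, by~\eqref{hprimo} and the growth of $f'$ against $a'$, $h$ is nondecreasing near $+\infty$. Applying \cite[Theorem 1.1]{PV} to $\Delta v=h(v)$ in $B_1(0)$ we deduce that any such solution $v$ is radially symmetric and increasing; by Lemma~\ref{colleg} and the strict monotonicity of $g$, the same holds for $u=g(v)$, completing the proof.
\end{proof}
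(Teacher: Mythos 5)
Your proof is correct and follows essentially the same route as the paper: insert the asymptotics from \eqref{fasspot-rad} and \eqref{aloglim} into \eqref{hsecondo-cond}, observe that the dominant term is $2p(p-1)f_{\infty}a_{\infty}^{2}s^{p-2}(\log s)^{4\gamma}>0$ for $p>1$ while the remaining terms are lower order, and conclude asymptotic convexity of $h$ so that \cite[Theorem 1.1]{PV} combined with Lemma~\ref{colleg} gives the symmetry of $u=g(v)$. The only blemish is a harmless factoring slip in your final bracket (the third and fourth entries should read $\frac{2\gamma}{\log s}$ and $\frac{8\gamma^{2}}{(\log s)^{2}}$ rather than $\frac{2\gamma}{s\log s}$ and $\frac{8\gamma^{2}}{s^{2}(\log s)^{2}}$, extra powers of $s$ apparently carried over from the exponential-$f$ case), which does not affect the conclusion since these terms are $o(1)$ either way.
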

\begin{proof}
Concerning \eqref{hsecondo-cond}, we have
\begin{align*}
& 2f''(s)a^2(s)-3 f'(s)a'(s)a(s)-f(s)a''(s)a(s)+2f(s)(a'(s))^2  \\
& =2 f_{\infty}p(p-1)s^{p-2}a_{\infty}^2(\log s)^{4\gamma}(1+o(1))-6 f_{\infty}p s^{p-1}\gamma a_{\infty}^2\frac{(\log s)^{2\gamma-1}}{s}(\log s)^{2\gamma}(1+o(1)) \\
& +2 f_{\infty}s^p\gamma a_{\infty}^2\frac{(\log s)^{2\gamma-1}}{s^2}(\log s)^{2\gamma}(1+o(1))+8f_{\infty} s^p\gamma^2 a_{\infty}^2\frac{(\log s)^{4\gamma-2}}{s^2}(1+o(1)) \\
&= f_{\infty} a_{\infty}^2 s^{p-2}(\log s)^{4\gamma}\Big[2p(p-1)-\frac{6p\gamma}{\log s}+\frac{2\gamma}{\log s}+\frac{8\gamma^2}{(\log s)^2}+o(1) \Big]>0
\end{align*}
for all $s>0$ large, completing the proof.
\end{proof}

\section{Blow-up rate of solutions}
\noindent
In this section we shall provide the proof of Theorems~\ref{blowI} and~\ref{blowII}.

\subsection{Proof of Theorem~\ref{blowI}}

\noindent
Let $\Omega$ be a bounded domain of $\R^N$ which satisfies an inner and an outer sphere
condition at each point of the boundary $\partial\Omega$. Consider the following condition
\begin{equation}
	\label{con-blowup1}
\lim_{u\to+\infty}\sqrt{F(g(u))}\int_u^{+\infty}\frac{\int_0^t \sqrt{F(g(s))}ds}{F^{3/2}(g(t))}dt=0,
\end{equation}
which merely depends upon the asymptotic behavior of $F$ and $a$. Then, assuming that
condition {\bf E}
holds, by directly applying
\cite[Theorem 1.10]{CD} to the semi-linear problem~\eqref{prob-semi}, if $\eta$ denotes the unique solution to 
$$
\eta'=-\sqrt{2F\circ g\circ\eta},\qquad \lim_{t\to 0^+}\eta(t)=+\infty,
$$
it follows that any blow-up solution $v\in C^2(\Omega)$ 
to \eqref{prob-semi} satisfies
\begin{equation}
	\label{vexpans}
v(x)=\eta({\rm d}(x,\partial\Omega))+o(1),\qquad\text{as ${\rm d}(x,\partial\Omega)\to 0$}
\end{equation}
if and only if \eqref{con-blowup1} holds.
By virtue of \eqref{cauchy} and the asymptotic behavior of $a(s)$ for $s$ large (i.e. \eqref{1.7}, \eqref{exgrow} or \eqref{loggrow}),
it is readily verified that there exists a positive constant $L$ such that
\begin{equation}\label{lipschitz-g}
|g(\tau_2)-g(\tau_1)|\leq L\frac{|\tau_2-\tau_1|}{a^{\frac{1}{2}}(\min\{g(\tau_1),g(\tau_2)\})},\qquad
\text{for every $\tau_1,\tau_2>0$ large}.
\end{equation}
Therefore, 
under assumption \eqref{con-blowup1}, any blow-up solution $u\in C^2(\Omega)$ 
to the quasi-linear problem \eqref{prob} satisfies
\begin{equation*}
|u(x)-g(\eta({\rm d}(x,\partial\Omega)))|=|g(v(x))-g(\eta({\rm d}(x,\partial\Omega)))|\leq L \frac{|v(x)-\eta({\rm d}(x,\partial\Omega))|}{a^{\frac{1}{2}}(\min\{u(x), g(\eta({\rm d}(x,\partial\Omega)))\})}
\end{equation*}
as ${\rm d}(x,\partial\Omega)\to 0$, namely due to \eqref{vexpans}
\begin{equation}
	\label{blowupconcl-u}
u(x)=g\circ\eta({\rm d}(x,\partial\Omega))+\frac 1 {a^{\frac{1}{2}}(\min\{u(x), g(\eta({\rm d}(x,\partial\Omega)))\})}o(1),\qquad\text{as $x\to x_0\in\partial\Omega$.}
\end{equation}
Moreover, in case \eqref{1.7} holds, then Lemma~\ref{l2.3} implies there exists a positive constant $C$ such that
$$ a^{\frac{1}{2}}(\min\{u(x), g(\eta({\rm d}(x,\partial\Omega)))\})\geq C \min\{ u(x)^{\frac k2}, \eta^{\frac k{k+2}}({\rm d}(x,\partial\Omega))\},$$
as ${\rm d}(x,\partial\Omega)\to 0$, while, if \eqref{exgrow} is satisfied, from Lemma~\ref{growl} we have that there exists a positive constant $C$ such that 
$$ a^{\frac{1}{2}}(\min\{u(x), g(\eta({\rm d}(x,\partial\Omega)))\})\geq C  \min\{e^{\gamma u(x)}, e^{\gamma g(\eta({\rm d}(x,\partial\Omega)))}\}\geq  \min\{e^{\gamma u(x)},\eta^{\frac 1{\alpha}}({\rm d}(x,\partial\Omega))\}$$
as ${\rm d}(x,\partial\Omega)\to 0$, for any $\alpha>1$.
Using \eqref{blowupconcl-u} we get Theorem~\ref{blowI} once~\eqref{con-blowup1} is satisfied.

\subsection{Polynomial growth}

\noindent
We have the following 
\begin{proposition}
\label{blow-polpol}
Assume that there exist $k>0$, $a_{\infty}>0$, $p>k+1$ and $f_{\infty}>0$ such that
\begin{equation*}
\lim_{s\to +\infty}\frac{a(s)} {s^k}=a_{\infty},\qquad
\lim_{s\to +\infty}\frac{f(s)} {s^p}=f_{\infty}.
\end{equation*}
Then condition~\eqref{con-blowup1} is fulfilled if and only if $p>2k+3$. When $p<2k+3$ it holds
\begin{equation*}
\lim_{u\to+\infty}\sqrt{F(g(u))}\int_u^{+\infty}\frac{\int_0^t \sqrt{F(g(s))}ds}{F^{3/2}(g(t))}dt=+\infty.
\end{equation*}
\end{proposition}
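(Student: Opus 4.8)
The plan is to reduce the three nested quantities appearing in \eqref{con-blowup1} to pure power asymptotics via Lemma~\ref{l2.3} and then to count exponents. First I would record that Lemma~\ref{l2.3} gives $g(s)=g_\infty s^{2/(k+2)}(1+o(1))$, so that, together with $f(s)=f_\infty s^p(1+o(1))$ and hence $F(s)=\frac{f_\infty}{p+1}s^{p+1}(1+o(1))$, one obtains
$$
F(g(s))=C_0\,s^{\theta}(1+o(1)),\qquad \theta:=\frac{2(p+1)}{k+2},\quad C_0:=\frac{f_\infty g_\infty^{p+1}}{p+1},\qquad s\to+\infty .
$$
The two elementary algebraic equivalences I will use throughout are $p>k+1\iff\theta>2$ and $p>2k+3\iff\theta>4$; in particular, under the standing hypothesis $p>k+1$ both $\theta/2+1$ and $\theta-2$ are strictly positive, so no degenerate (logarithmic) exponents arise below.

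Next, in the style of the existence proofs above, I would fix $\eps>0$ small, choose $R$ so large that $(C_0-\eps)s^\theta\le F(g(s))\le(C_0+\eps)s^\theta$ for $s\ge R$, and estimate the inner integral by splitting $\int_0^t=\int_0^R+\int_R^t$; the first piece is a fixed finite constant and the second is comparable to $\int_R^t s^{\theta/2}\,ds$, whence
$$
\int_0^t\sqrt{F(g(s))}\,ds=\frac{\sqrt{C_0}}{\theta/2+1}\,t^{\theta/2+1}(1+o(1)),\qquad t\to+\infty .
$$
Dividing by $F^{3/2}(g(t))=C_0^{3/2}t^{3\theta/2}(1+o(1))$, the outer integrand is equivalent to $\frac{1}{C_0(\theta/2+1)}\,t^{1-\theta}(1+o(1))$, and since $\theta>2$ forces $1-\theta<-1$, the tail integral converges with
$$
\int_u^{+\infty}\frac{\int_0^t\sqrt{F(g(s))}\,ds}{F^{3/2}(g(t))}\,dt=\frac{1}{C_0(\theta/2+1)(\theta-2)}\,u^{2-\theta}(1+o(1)) .
$$
Multiplying once more by $\sqrt{F(g(u))}=\sqrt{C_0}\,u^{\theta/2}(1+o(1))$ then yields
$$
\sqrt{F(g(u))}\int_u^{+\infty}\frac{\int_0^t\sqrt{F(g(s))}\,ds}{F^{3/2}(g(t))}\,dt=\frac{u^{2-\theta/2}}{\sqrt{C_0}\,(\theta/2+1)(\theta-2)}\,(1+o(1)) .
$$

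Finally I would read off the dichotomy from the sign of the exponent $2-\theta/2$: it is negative precisely when $\theta>4$, i.e.\ $p>2k+3$, in which case the right-hand side tends to $0$ and \eqref{con-blowup1} holds; when $p<2k+3$ one has $2-\theta/2>0$ and the expression diverges to $+\infty$, which is the stated alternative, and when $p=2k+3$ it tends to the positive constant $[\sqrt{C_0}\,(\theta/2+1)(\theta-2)]^{-1}$, so \eqref{con-blowup1} again fails --- this settles the ``only if'' direction. The only step needing care is carrying the $(1+o(1))$ factors cleanly through the three successive integrations; this is exactly what the $\eps$-sandwich does --- at each stage bracket the integrand between two power functions with coefficients $C_0\pm\eps$, integrate, and let $\eps\to0$ at the end (equivalently, apply de l'H\^opital's rule three times) --- and the mild point within it is to check that the bounded initial segment of $\int_0^t$ is genuinely negligible against $t^{\theta/2+1}$ as $t\to\infty$. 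Everything else is a direct power count, following the pattern of Proposition~\ref{potpot}.
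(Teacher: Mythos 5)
Your argument is correct and follows essentially the same route as the paper's proof: both reduce $F\circ g$ to the power asymptotics $F(g(s))\sim C_0 s^{2(p+1)/(k+2)}$ (via Lemma~\ref{l2.3}, as in Proposition~\ref{potpot}) and then count exponents through the three nested integrations, with your exponent $2-\theta/2=\frac{2k+3-p}{k+2}$ matching the paper's final power $u^{\frac{2k+3-p}{k+2}}$. The only cosmetic difference is that you track the sharp $(1+o(1))$ constants where the paper is content with two-sided bounds $C$, $2C$ and a $\limsup/\liminf$ computation, including the same treatment of the borderline case $p=2k+3$, where the limit is a positive constant so that \eqref{con-blowup1} still fails.
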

\begin{proof}
We denote by $C$ a positive constant and by $C'$ a constant without any sign
restriction, which may vary from one place to another.
By the proof of Proposition~\ref{potpot}, we learn that there exists a constant $R>0$ such that 
\begin{equation}
	\label{eunapot}
Cs^{\frac{p+1}{k+2}}\leq \sqrt{F(g(s))} \leq 2Cs^{\frac{p+1}{k+2}},  \,\,\,\quad\text{for every $s\geq R$.}
\end{equation}
\noindent In turn, for every $t\geq R$, we obtain
\begin{equation}
	\label{eduepot}
C t^{\frac{k+p+3}{k+2}}+C'\leq \int_0^t \sqrt{F(g(s))}ds\leq C+\int_{R}^t \sqrt{F(g(s))}ds\leq C'+Ct^{\frac{k+p+3}{k+2}},
\end{equation}
Furthermore, by~\eqref{eunapot}-\eqref{eduepot}, we get
\begin{align*}
&\limsup_{u\to+\infty}\sqrt{F(g(u))}\int_u^{+\infty}\frac{\int_0^t \sqrt{F(g(s))}ds}{F^{3/2}(g(t))}dt  \\
&\leq C\limsup_{u\to+\infty}u^{\frac{p+1}{k+2}}\int_u^{+\infty}\frac{C'+C t^{\frac{k+p+3}{k+2}}}{t^{\frac{3p+3}{k+2}}}dt \\
& \leq C \limsup_{u\to+\infty}u^{\frac{p+1}{k+2}}\int_u^{+\infty} t^{\frac{k-2p}{k+2}}dt
= C\limsup_{u\to+\infty}u^{\frac{2k+3-p}{k+2}}=0,
\end{align*}
yielding~\eqref{con-blowup1}. Assume now instead that $k+1<p\leq 2k+3$, we have 
\begin{align*}
&\liminf_{u\to+\infty}\sqrt{F(g(u))}\int_u^{+\infty}\frac{\int_0^t \sqrt{F(g(s))}ds}{F^{3/2}(g(t))}dt  \\
&\geq C\liminf_{u\to+\infty}u^{\frac{p+1}{k+2}}\int_u^{+\infty}\frac{C'+Ct^{\frac{k+p+3}{k+2}}}{t^{\frac{3p+3}{k+2}}}dt \\
& \geq C \liminf_{u\to+\infty}u^{\frac{p+1}{k+2}}\int_u^{+\infty} \big(C't^{-\frac{3p+3}{k+2}}+Ct^{\frac{k-2p}{k+2}}\big)dt \\
&= \liminf_{u\to+\infty}\big(C'u^{\frac{k-2p}{k+2}}+ Cu^{\frac{2k+3-p}{k+2}}\big)=
\begin{cases}
+\infty, & \text{for $p<2k+3$} \\
C, &  \text{for $p=2k+3$},
\end{cases}
\end{align*}
being $\frac{k-2p}{k+2}<-1$. This concludes the proof.
\end{proof}

\noindent
We have the following 
\begin{proposition}
\label{blow-polexp}
Assume that there exist $k>0$, $a_{\infty}>0$, $\beta>0$ and $f_{\infty}>0$ such that
\begin{equation*}
\lim_{s\to +\infty}\frac{a(s)} {s^k}=a_{\infty},\qquad
\lim_{s\to +\infty}\frac{f(s)} {e^{2\beta s}}=f_{\infty}.
\end{equation*}
Then condition~\eqref{con-blowup1} is always fulfilled. 
\end{proposition}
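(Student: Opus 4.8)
The plan is to read off \eqref{con-blowup1} from sharp exponential sandwich bounds for $\sqrt{F(g(s))}$ that are already implicit in the proof of Proposition~\ref{potexp}. Put $\alpha:=\tfrac{2}{k+2}\in(0,1)$. Since the present hypotheses are exactly \eqref{1.7} and \eqref{fassexppot}, Lemma~\ref{l2.3} gives $g(s)=g_\infty s^{\alpha}(1+o(1))$, and the computation in the proof of Proposition~\ref{potexp} gives $F(s)=\tfrac{f_\infty}{2\beta}e^{2\beta s}(1+o(1))$; combining the two, for every $\eps>0$ there is $R=R(\eps)>0$ with
\begin{equation*}
e^{(\beta g_\infty-\eps)s^{\alpha}}\le\sqrt{F(g(s))}\le e^{(\beta g_\infty+\eps)s^{\alpha}},\qquad s\ge R,
\end{equation*}
the lower bound being displayed in that proof and the upper one following from the same estimate with $+\eps$ in place of $-\eps$. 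I will keep $\eps$ small, to be fixed at the end.

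Next I would run the estimates needed to bound from above the (positive) quantity in \eqref{con-blowup1}. All the integrals that occur are of the form $\int e^{\pm c\,s^{\alpha}}ds$ with $0<\alpha<1$, and are controlled, after the substitution $r=s^{\alpha}$, by incomplete Gamma integrals: for $t$ large $\int_0^te^{c s^{\alpha}}ds\le C\,t^{1-\alpha}e^{c t^{\alpha}}$, and for $u$ large $\int_u^{+\infty}t^{\theta}e^{-\mu t^{\alpha}}\,dt\le C\,u^{\theta+1-\alpha}e^{-\mu u^{\alpha}}$ whenever $\mu>0$. Using the sandwich, for $t$ large (splitting the integral at $R$ and absorbing the compact part into a constant, as in the proof of Proposition~\ref{blow-polpol}),
\begin{equation*}
\int_0^t\sqrt{F(g(s))}\,ds\le C+C\,t^{1-\alpha}e^{(\beta g_\infty+\eps)t^{\alpha}},\qquad
F^{3/2}(g(t))\ge e^{3(\beta g_\infty-\eps)t^{\alpha}},
\end{equation*}
so the inner integrand is at most $C\,t^{1-\alpha}e^{-(2\beta g_\infty-4\eps)t^{\alpha}}$ for $t$ large. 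With $\mu:=2\beta g_\infty-4\eps>0$ (valid once $\eps<\tfrac12\beta g_\infty$) the tail integral is then $\le C\,u^{2-2\alpha}e^{-\mu u^{\alpha}}$ for $u$ large, and multiplying by $\sqrt{F(g(u))}\le e^{(\beta g_\infty+\eps)u^{\alpha}}$ gives
\begin{equation*}
\sqrt{F(g(u))}\int_u^{+\infty}\frac{\int_0^t\sqrt{F(g(s))}\,ds}{F^{3/2}(g(t))}\,dt\le C\,u^{2-2\alpha}\,e^{-(\beta g_\infty-5\eps)u^{\alpha}}.
\end{equation*}

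To conclude I would choose $\eps<\tfrac15\beta g_\infty$, so that the exponential factor decays and, since $\alpha>0$, dominates the polynomial prefactor; the right-hand side then tends to $0$ as $u\to+\infty$, which is \eqref{con-blowup1}. I do not expect a genuine obstacle: in contrast with the purely polynomial regime of Proposition~\ref{blow-polpol}, the exponential growth of $f$ forces $F\circ g$ to grow like the stretched exponential $e^{2\beta g_\infty s^{\alpha}}$, which already suffices for the ratio $\int_0^t\sqrt{F\circ g}\,/\,F^{3/2}(g(t))$ to decay faster than any power, so the limit in \eqref{con-blowup1} is always zero regardless of $k$ and $\beta$. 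The only points demanding a little care are the bookkeeping of $\eps$ (making sure the net rate $\beta g_\infty-5\eps$ stays positive after the three bounds are combined) and the routine verification of the two change-of-variables estimates quoted above.
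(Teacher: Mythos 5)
Your proof is correct and follows essentially the same route as the paper: the same sandwich $e^{(\beta g_\infty-\eps)s^{2/(k+2)}}\le\sqrt{F(g(s))}\le e^{(\beta g_\infty+\eps)s^{2/(k+2)}}$ drawn from Proposition~\ref{potexp}, then the same three-step estimate (inner integral, tail integral, multiplication by $\sqrt{F(g(u))}$) with a small-$\eps$ budget at the end. The only difference is bookkeeping: you carry the polynomial prefactors $t^{1-\alpha}$, $u^{2-2\alpha}$ explicitly via incomplete-Gamma asymptotics and need $\eps<\tfrac15\beta g_\infty$, whereas the paper absorbs the powers $w^{k/2}$ arising from the substitution $w=s^{2/(k+2)}$ into extra factors $e^{\bar\eps w}$ and budgets $7\bar\eps<\beta g_\infty$; both variants are sound.
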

\begin{proof}
We denote by $C$ a positive constant and by $C'$ a constant without any sign
restriction, which may vary from one one place to another.
By the proof of Proposition~\ref{potexp}, we learn that for any fixed 
$\eps$ there exists a constant $R=R(\eps)>0$ such that 
\begin{equation}
	\label{eunapotex}
\sqrt{F(g(s))} \geq  e^{(\beta g_{\infty}-\eps) s^{\frac{2}{k+2}}},  \,\,\,\quad\text{for every $s\geq R$.}
\end{equation}
Moreover, for every $\eps>0$ fixed we have
\begin{equation*}
\lim_{s\to+\infty}\frac{F(g(s))}{e^{(2\beta g_{\infty}+2\eps) s^{\frac{2}{k+2}}}}
=\frac{f_{\infty}}{2\beta}\lim_{s\to+\infty}\frac{e^{2\beta g_{\infty}s^{\frac{2}{k+2}}(1+o(1)) }}{e^{(2\beta g_{\infty}+2\eps) s^{\frac{2}{k+2}}}}
=\frac{f_{\infty}}{2\beta}\lim_{s\to+\infty}\frac{e^{2\beta g_{\infty} o(1)s^{\frac{2}{k+2}}}}{e^{2\eps s^{\frac{2}{k+2}}}}=0.
\end{equation*}
In turn, 
increasing $R$ if needed, we have
\begin{equation}
	\label{eduepotex}
 e^{(\beta g_{\infty}-\eps) s^{\frac{2}{k+2}}}\leq \sqrt{F(g(s))} \leq e^{(\beta g_{\infty}+\eps) s^{\frac{2}{k+2}}},\qquad\text{for every $s\geq R$}.
\end{equation}
Choose now $\bar\eps>0$ such that $7\bar\eps<\beta g_{\infty}$ and let $\bar R\geq R$ be such that
$$
w^{k/2}\leq e^{\bar\eps w},\qquad \text{for every $w\geq \bar R^{\frac{2}{k+2}}$}.
$$ 
Then, in light of inequality~\eqref{eduepotex}, we obtain
\begin{align*}
\int_0^t \sqrt{F(g(s))}ds& =\int_0^{\bar R} \sqrt{F(g(s))}ds+\int_{\bar R}^t \sqrt{F(g(s))}ds 
\leq C+ \int_{\bar R}^t e^{(\beta g_{\infty}+\bar\eps) s^{\frac{2}{k+2}}}ds \\
&= C+ C\int_{\bar R^{\frac{2}{k+2}}}^{t^{\frac{2}{k+2}}} e^{(\beta g_{\infty}+\bar\eps) w} w^{k/2}dw
\leq C+ C\int_{\bar R^{\frac{2}{k+2}}}^{t^{\frac{2}{k+2}}} e^{(\beta g_{\infty}+2\bar\eps) w}dw \\
&=C'+ C e^{(\beta g_{\infty}+2\bar\eps) t^{\frac{2}{k+2}}}\leq C e^{(\beta g_{\infty}+2\bar\eps) t^{\frac{2}{k+2}}},
\end{align*}
for every $t$ large. Therefore,
\begin{align*}
\lim_{u\to+\infty}&\sqrt{F(g(u))}\int_u^{+\infty}\frac{\int_0^t \sqrt{F(g(s))}ds}{F^{3/2}(g(t))}dt  \\
&\leq C\lim_{u\to+\infty}e^{(\beta g_{\infty}+\bar\eps) 
u^{\frac{2}{k+2}}}\int_u^{+\infty}\frac{e^{(\beta g_{\infty}+2\bar\eps) t^{\frac{2}{k+2}}}}{e^{(3\beta g_{\infty}-3\bar \eps) t^{\frac{2}{k+2}}}}dt \\
&= C\lim_{u\to+\infty}e^{(\beta g_{\infty}+\bar\eps) u^{\frac{2}{k+2}}}\int_{u^{\frac{2}{k+2}}}^{+\infty} e^{(-2\beta g_{\infty}+5\bar\eps)w} w^{k/2} dw \\
&\leq C\lim_{s\to+\infty}e^{(\beta g_{\infty}+\bar\eps) s}\int_{s}^{+\infty} e^{(-2\beta g_{\infty}+6\bar\eps)w}  dw \\
&\leq C\lim_{s\to+\infty}e^{(\beta g_{\infty}+\bar\eps) s}e^{(-2\beta g_{\infty}+6\bar\eps)s}  \\
& =C\lim_{s\to+\infty}e^{-(\beta g_{\infty}-7\bar\eps)s}=0, 
\end{align*}
yielding~\eqref{con-blowup1}. This concludes the proof.
\end{proof}

\subsection{Exponential growth}

\noindent
We have the following 
\begin{proposition}
\label{blow-expexp}
Assume that there exist $\gamma>0$, $a_{\infty}>0$, $\beta>\gamma$ and $ f_{\infty}>0$ such that
\begin{equation*}
\lim_{s\to +\infty}\frac{a(s)} {e^{2\gamma s}}=a_{\infty},\qquad
\lim_{s\to +\infty}\frac{f(s)} {e^{2\beta s}}=f_{\infty}.
\end{equation*}
Then condition~\eqref{con-blowup1} is fulfilled provided that $\beta>2\gamma$. When $\beta<2\gamma$ it holds
\begin{equation*}
\lim_{u\to+\infty}\sqrt{F(g(u))}\int_u^{+\infty}\frac{\int_0^t \sqrt{F(g(s))}ds}{F^{3/2}(g(t))}dt=+\infty.
\end{equation*}
\end{proposition}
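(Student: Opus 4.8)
The plan is to verify (or disprove) condition~\eqref{con-blowup1} by converting everything into power-type estimates, exactly along the lines of the proof of Proposition~\ref{blow-polpol}, the role of the exponent $\frac{p+1}{k+2}$ there being played here by $q:=\frac{\beta}{\gamma}>1$. The crucial input is Lemma~\ref{growl}, which gives $g(s)\sim\frac{1}{\gamma}\log s$ and hence $F(g(s))=s^{2q+o(1)}$; more precisely, reading off the proof of Proposition~\ref{expexp}, for every $\eps>0$ there is $R=R(\eps)>0$ such that
\[
s^{2q-2\eps}\le F(g(s))\le s^{2q+2\eps},\qquad s\ge R,
\]
so that $s^{q-\eps}\le\sqrt{F(g(s))}\le s^{q+\eps}$ and $F^{3/2}(g(t))\ge t^{3q-3\eps}$ for $s,t\ge R$.

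For the case $\beta>2\gamma$, i.e.\ $q>2$, I would first integrate the upper bound: since $q+1-\eps>0$, there exist a positive constant $C$ and a constant $C'$ of no definite sign (both varying from line to line) with $\int_0^t\sqrt{F(g(s))}\,ds\le C'+Ct^{q+1+\eps}\le Ct^{q+1+\eps}$ for $t$ large. Dividing by $F^{3/2}(g(t))\ge t^{3q-3\eps}$ and integrating the resulting power $t^{(q+1+\eps)-(3q-3\eps)}=t^{1-2q+4\eps}$, which is integrable at $+\infty$ as soon as $\eps$ is small enough that $1-2q+4\eps<-1$ (possible since $q>1$), one obtains $\int_u^{+\infty}\frac{\int_0^t\sqrt{F(g(s))}\,ds}{F^{3/2}(g(t))}\,dt\le C\,u^{2-2q+4\eps}$. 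Multiplying by $\sqrt{F(g(u))}\le u^{q+\eps}$ then yields
\[
\limsup_{u\to+\infty}\sqrt{F(g(u))}\int_u^{+\infty}\frac{\int_0^t\sqrt{F(g(s))}\,ds}{F^{3/2}(g(t))}\,dt\le C\limsup_{u\to+\infty}u^{2-q+5\eps},
\]
which is $0$ provided $\eps$ was chosen with $2-q+5\eps<0$; this is the desired condition~\eqref{con-blowup1}.

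For the case $\beta<2\gamma$, i.e.\ $1<q<2$, I would run the same chain with the inequalities reversed: from $\sqrt{F(g(s))}\ge s^{q-\eps}$ one gets $\int_0^t\sqrt{F(g(s))}\,ds\ge Ct^{q+1-\eps}$ for $t$ large (the power term eventually dominating the fixed contribution from $[0,R]$), and dividing by $F^{3/2}(g(t))\le t^{3q+3\eps}$ and integrating $t^{1-2q-4\eps}$ (still convergent at $+\infty$ for $\eps$ small) gives a lower bound of order $u^{2-2q-4\eps}$ for the tail integral; multiplying by $\sqrt{F(g(u))}\ge u^{q-\eps}$ produces a quantity bounded below by $C\,u^{2-q-5\eps}$, which tends to $+\infty$ once $\eps$ is taken small enough that $2-q-5\eps>0$ (possible since $q<2$).

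The computations themselves are routine once these power bounds are in hand; the only point needing care — and the same one already present in Proposition~\ref{blow-polpol} — is the order of the quantifiers on $\eps$: one must first fix $\eps$ according to how close $q=\beta/\gamma$ sits to the threshold value $2$ (and to $1$, so as to keep the tail integrals convergent), and only afterwards invoke the estimate extracted from Proposition~\ref{expexp} with that particular $\eps$, so that all the strict inequalities between exponents hold simultaneously.
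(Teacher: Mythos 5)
Your proposal is correct and follows essentially the same route as the paper's proof: both extract the two-sided power bounds $s^{2\beta/\gamma-2\eps}\le F(g(s))\le s^{2\beta/\gamma+2\eps}$ from the proof of Proposition~\ref{expexp}, integrate to bound $\int_0^t\sqrt{F(g(s))}\,ds$ by powers of $t$, and then compute the resulting power of $u$, fixing $\eps$ in advance so that $2-\beta/\gamma\pm 5\eps$ has the right sign in each case. Your remark on the order of quantifiers for $\eps$ matches exactly the choice of $\bar\eps$ made in the paper.
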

\begin{proof}
We denote by $C$ a positive constant and by $C'$ a constant without any sign
restriction, which may vary from one place to another.
By the proof of Proposition~\ref{expexp}, we learn that for every $\eps>0$ there exists a positive value $R=R(\eps)$ large enough that
$F(g(s)) \geq s^{2\beta/\gamma-2\eps}$ for every $s\geq R$.
Furthermore, enlarging $R$ if needed, we have
$$
F(g(s)) \leq s^{\frac{2\beta}{\gamma}+2\eps},  \,\,\,\quad\text{for every $s\geq R$.}
$$
Assume that $\beta>2\gamma$. Choosing now $\bar\eps>0$ so small that $2-\frac{\beta}{\gamma}+5\bar\eps<0$, we have
\begin{equation}
	\label{euna}
  s^{\frac{2\beta}{\gamma}-2\bar \eps}\leq F(g(s)) \leq s^{\frac{2\beta}{\gamma}+2\bar\eps},  \,\,\,\quad\text{for every $s\geq R$.}
\end{equation}
In turn, for every $t\geq R$, we obtain
\begin{equation}
	\label{edue}
C t^{1+\frac{\beta}{\gamma}-\bar \eps}+C'\leq \int_0^t \sqrt{F(g(s))}ds\leq C+
\int_{R}^t \sqrt{F(g(s))}ds\leq C'+C t^{1+\frac{\beta}{\gamma}+{\bar\eps}}\leq Ct^{1+\frac{\beta}{\gamma}+{\bar\eps}} .
\end{equation}
In turn, by~\eqref{euna}-\eqref{edue}, we get
\begin{align*}
&\lim_{u\to+\infty}\sqrt{F(g(u))}\int_u^{+\infty}\frac{\int_0^t \sqrt{F(g(s))}ds}{F^{3/2}(g(t))}dt  \\
&\leq
C\lim_{u\to+\infty}u^{\frac{\beta}{\gamma}+{\bar\eps}}\int_u^{+\infty}\frac{ t^{1+\frac{\beta}{\gamma}+{\bar\eps}}}{t^{3\beta/\gamma-3\bar \eps}}dt \\
& \leq C \lim_{u\to+\infty}u^{\frac{\beta}{\gamma}+{\bar\eps}}\int_u^{+\infty} t^{1-\frac{2\beta}{\gamma}+4{\bar\eps}}dt
= C \lim_{u\to+\infty}u^{2-\frac{\beta}{\gamma}+5\bar\eps}=0,
\end{align*}
yielding~\eqref{con-blowup1}.
Assume now instead that $\beta<2\gamma$. Choosing now $\bar\eps>0$ so small that $2-\frac{\beta}{\gamma}- 5\bar\eps >0$, we have
\begin{align*}
&\lim_{u\to+\infty}\sqrt{F(g(u))}\int_u^{+\infty}\frac{\int_0^t \sqrt{F(g(s))}ds}{F^{3/2}(g(t))}dt  \\
&\geq \lim_{u\to+\infty}u^{\frac{\beta}{\gamma}-\bar \eps}\int_u^{+\infty}\frac{C'+Ct^{1+\frac{\beta}{\gamma}-\bar \eps}}{t^{3\beta/\gamma+{3\bar\eps}}}dt \\
& \geq  \lim_{u\to+\infty}u^{\frac{\beta}{\gamma}-\bar \eps}\int_u^{+\infty} C't^{-\frac{3\beta}{\gamma}-{3\bar\eps}}+Ct^{1-\frac{2\beta}{\gamma}-4\bar\eps}dt \\
&= \lim_{u\to+\infty}\big( C'u^{1-\frac{2\beta}{\gamma}-4\bar\eps}+Cu^{2-\frac{\beta}{\gamma}-5\bar\eps}\big)=+\infty,
\end{align*}
concluding the proof since $1-\frac{2\beta}{\gamma}-4\bar\eps <0$ and $2-\frac{\beta}{\gamma}-5\bar \eps>0$.
\end{proof}

\subsection{Logarithmic growth}

\noindent
We have the following 
\begin{proposition}
\label{blow-logpot}
Assume that there exist $\gamma>0$, $a_{\infty}>0$, $p>1$ and $f_{\infty}>0$ such that
\begin{equation*}
\lim_{s\to +\infty}\frac{a(s)} {(\log s)^{2\gamma}}=a_{\infty},\qquad
\lim_{s\to +\infty}\frac{f(s)} {s^p}=f_{\infty}.
\end{equation*}
Then condition~\eqref{con-blowup1} is fulfilled provided that $p>3$. When $1<p<3$ it holds
\begin{equation*}
\lim_{u\to+\infty}\sqrt{F(g(u))}\int_u^{+\infty}\frac{\int_0^t \sqrt{F(g(s))}ds}{F^{3/2}(g(t))}dt=+\infty.
\end{equation*}
\end{proposition}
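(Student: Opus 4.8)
The plan is to argue exactly as in the proof of Proposition~\ref{blow-polpol}, with Lemma~\ref{growlog} playing the role of Lemma~\ref{l2.3}. Since $F(t)=\int_r^t f(\tau)\,d\tau$ and $f(s)\sim f_\infty s^p$, one has $F(t)=\frac{f_\infty}{p+1}t^{p+1}(1+o(1))$ as $t\to+\infty$; combining this with the two-sided bound $s^{1-\eps}\le g(s)\le s$ for $s\ge R(\eps)$ provided by Lemma~\ref{growlog} (as already exploited in the proof of Proposition~\ref{logpot}), I obtain, for every $\eps\in(0,1)$, a radius $R=R(\eps)$ and a constant $C=C(\eps)>0$ such that
\[
C\,s^{\frac{(p+1)(1-\eps)}{2}}\le \sqrt{F(g(s))}\le C\,s^{\frac{p+1}{2}},\qquad s\ge R .
\]
These are the only facts about $a$ and $f$ that will be needed, since $a$ enters \eqref{con-blowup1} only through $g$. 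Here and below $C$ denotes a positive constant and $C'$ a constant without sign restriction, both allowed to change from line to line.

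Integrating (the exponents being $>-1$ once $\eps$ is small, as $p>1$), it follows that for $t$ large
\[
C\,t^{\frac{p+3-(p+1)\eps}{2}}+C'\le \int_0^t\sqrt{F(g(s))}\,ds\le C'+C\,t^{\frac{p+3}{2}},
\]
the contribution of $[0,R]$ being negligible against the leading powers. For the convergence statement assume $p>3$. Inserting the upper bound for $\int_0^t\sqrt{F(g(s))}\,ds$ together with the lower bound $F^{3/2}(g(t))\ge C\,t^{\frac{3(p+1)(1-\eps)}{2}}$, the integrand in \eqref{con-blowup1} is $\le C\,t^{-p+\frac{3(p+1)\eps}{2}}$; choosing $\eps<\frac{2(p-1)}{3(p+1)}$ makes it integrable on $[u,+\infty)$ with tail bounded by $C\,u^{1-p+\frac{3(p+1)\eps}{2}}$, and multiplying by $\sqrt{F(g(u))}\le C\,u^{\frac{p+1}{2}}$ yields
\[
\sqrt{F(g(u))}\int_u^{+\infty}\frac{\int_0^t\sqrt{F(g(s))}\,ds}{F^{3/2}(g(t))}\,dt\le C\,u^{\frac{3-p}{2}+\frac{3(p+1)\eps}{2}}.
\]
Since $p>3$ one can further shrink $\eps$ so that the exponent becomes negative, giving \eqref{con-blowup1}.

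For the divergence when $1<p<3$ I simply reverse the three estimates above: using $\sqrt{F(g(u))}\ge C\,u^{\frac{(p+1)(1-\eps)}{2}}$, the lower bound for $\int_0^t\sqrt{F(g(s))}\,ds$ valid for $t$ large, and $F^{3/2}(g(t))\le C\,t^{\frac{3(p+1)}{2}}$, the integrand is $\ge C\,t^{-p-\frac{(p+1)\eps}{2}}$, so $\int_u^{+\infty}(\cdot)\,dt\ge C\,u^{1-p-\frac{(p+1)\eps}{2}}$ (the inner integral being convergent for every $p>1$), whence
\[
\sqrt{F(g(u))}\int_u^{+\infty}\frac{\int_0^t\sqrt{F(g(s))}\,ds}{F^{3/2}(g(t))}\,dt\ge C\,u^{\frac{3-p}{2}-(p+1)\eps},
\]
which diverges as soon as $\eps<\frac{3-p}{2(p+1)}$, admissible because $p<3$. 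The only point requiring care is the bookkeeping of the $\eps$-losses in the exponents coming from the logarithmic asymptotics of $g$: one must check that the thresholds $\frac{2(p-1)}{3(p+1)}$ and $\frac{3-p}{2(p+1)}$ are strictly positive in the ranges $p>1$ and $p<3$ respectively, after which the computation closes verbatim as in Proposition~\ref{blow-polpol}. I do not expect any further obstacle.
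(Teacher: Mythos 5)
Your proposal is correct and follows essentially the same route as the paper: the two-sided bound $s^{1-\eps}\le g(s)\le s$ from Lemma~\ref{growlog} gives $Cs^{(p+1)(1-\eps)}\le F(g(s))\le Cs^{p+1}$, and the rest is the same power-counting with an $\eps$-bookkeeping identical to the paper's choice of $\bar\eps$ (the thresholds you state match the paper's conditions $p-3-3\bar\eps(p+1)>0$ and $\frac{3-p}{2}-\bar\eps(p+1)>0$). No gaps.
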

\begin{proof}
We denote by $C$ a positive constant and by $C'$ a constant without any sign
restriction, which may vary from one place to another.
Taking into account Lemma \ref{growlog},
for every $\eps\in (0,1)$ there exists $R=R(\eps)>0$ such that
$$
C s^{(p+1)(1-\eps)}\leq F(g(s))\leq Cs^{p+1},\qquad\text{for every $s\geq R$.}
$$
So for every $\eps\in (0,1)$ and for all $t\geq R$, we obtain
\begin{align*}
Ct^{\frac{p+1}2(1-\eps)+1}+C'\leq  \int_0^t \sqrt{F(g(s))}ds &=\int_0^{R} \sqrt{F(g(s))}ds+\int_{R}^t \sqrt{F(g(s))}ds	\\
&\leq C+C\int_{R_{\bar\eps}}^t s^{\frac{p+1}{2}}ds\leq C t^{\frac{p+3}{2}}.
\end{align*}	
Assume that $p>3$ and let $\bar\eps>0$ with $p-3-3\bar\eps(p+1)>0$.
Whence, we get
\begin{align*}
\limsup_{u\to+\infty}\sqrt{F(g(u))}&\int_u^{+\infty}\frac{\int_0^t \sqrt{F(g(s))}ds}{F^{3/2}(g(t))}dt  \\
&\leq C\limsup_{u\to+\infty}	u^{\frac{p+1}{2}}\int_u^{+\infty}\frac{t^{\frac{p+3}{2}}}{t^{\frac{3p+3}{2}(1-\eps)}}dt \\
& =C\limsup_{u\to+\infty} u^{\frac{p+1}{2}}\int_u^{+\infty}t^{-p+\frac{3\bar\eps}{2}(p+1)} dt\\
& =C\limsup_{u\to+\infty} u^{-\frac{p-3-3\bar\eps(p+1)}{2}}=0.
\end{align*}	
On the contrary, if $1<p<3$, fix $\bar\eps$ so small that $\frac{3-p}{2}-\bar\eps(p+1)>0$. In turn, we deduce 
\begin{align*}
\liminf_{u\to+\infty}\sqrt{F(g(u))}&\int_u^{+\infty}\frac{\int_0^t \sqrt{F(g(s))}ds}{F^{3/2}(g(t))}dt  \\
& \geq E\liminf_{u\to+\infty}	u^{\frac{p+1}{2}(1-\bar\eps)}\int_u^{+\infty}\frac{t^{\frac{p+1}{2}(1-\bar\eps)+1}+C'}{t^{\frac{3p+3}{2}}}  dt\\
& =\lim_{u\to+\infty} u^{\frac{p+1}{2}(1-\bar\eps)}\big(C u^{1-p-\bar\eps\frac{p+1}{2}}+C'u^{1-\frac{3}{2}(p+1)}  \big)   \\
& =\lim_{u\to+\infty} \big(C u^{\frac{3-p}{2}-\bar\eps(p+1)}+C' u^{-p-\bar\eps\frac{p+1}{2}}\big)=+\infty.                      
\end{align*}
This concludes the proof.
\end{proof}

\noindent
We have the following 
\begin{proposition}
\label{blow-logexp}
Assume that there exist $\gamma>0$, $a_{\infty}>0$, $\beta>0$ and $f_{\infty}>0$ such that
\begin{equation*}
\lim_{s\to +\infty}\frac{a(s)} {(\log s)^{2\gamma}}=a_{\infty},\qquad
\lim_{s\to +\infty}\frac{f(s)} {e^{2\beta s}}=f_{\infty}.
\end{equation*}
Then condition~\eqref{con-blowup1} is fulfilled. 
\end{proposition}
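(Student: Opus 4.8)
The plan is to strip the unknown change-of-variable function $g$ out of condition~\eqref{con-blowup1} and reduce it to an estimate involving only $F$ and $a$, after which the computation is of the same ``exponential dominates poly-logarithm'' flavour already used in Propositions~\ref{blow-polexp} and~\ref{blow-expexp}. This preliminary reduction is the genuinely delicate point. Indeed, Lemma~\ref{growlog} only controls $g$ through $s^{1-\eps}\le g(s)\le s$, so $F(g(s))$ is known only to lie between $e^{2\beta s^{1-\eps}}$ and $e^{2\beta s}$; inserting such crude two-sided bounds into the numerator and the denominator of~\eqref{con-blowup1} produces an incurable mismatch of exponents, in contrast with the polynomial situation of Proposition~\ref{blow-logpot}, where a sufficiently small $\eps$ repairs everything. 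To avoid this I would use the substitutions $\sigma=g(s)$, $\tau=g(t)$, $v=g(u)$: by~\eqref{cauchy} we have $ds=\sqrt{a(\sigma)}\,d\sigma$ and $dt=\sqrt{a(\tau)}\,d\tau$, while $g(0)=0$, so a double change of variable gives
$$
\sqrt{F(g(u))}\int_u^{+\infty}\frac{\int_0^t\sqrt{F(g(s))}\,ds}{F^{3/2}(g(t))}\,dt
=\sqrt{F(v)}\int_v^{+\infty}\frac{\sqrt{a(\tau)}}{F^{3/2}(\tau)}\Big(\int_0^\tau\sqrt{F(\sigma)}\,\sqrt{a(\sigma)}\,d\sigma\Big)\,d\tau ,
$$
and since $v=g(u)\to+\infty$ as $u\to+\infty$ by~\eqref{limiti1}, it suffices to show that the right-hand side tends to $0$ as $v\to+\infty$.

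The rest is bookkeeping. From the hypothesis $f(s)/e^{2\beta s}\to f_\infty>0$ one gets $F(s)\sim\frac{f_\infty}{2\beta}e^{2\beta s}$, while~\eqref{loggrow} gives $a(s)\sim a_\infty(\log s)^{2\gamma}$; hence there are constants $0<c_1\le c_2$ and $R>0$ such that $c_1e^{2\beta\sigma}\le F(\sigma)\le c_2e^{2\beta\sigma}$ and $a(\sigma)\le c_2(\log\sigma)^{2\gamma}$ for $\sigma\ge R$. Using the monotonicity of $\log$, the inner integral satisfies $\int_0^\tau\sqrt{F(\sigma)}\sqrt{a(\sigma)}\,d\sigma\le C_1+c_2(\log\tau)^{\gamma}\int_R^\tau e^{\beta\sigma}\,d\sigma\le C_2\,e^{\beta\tau}(\log\tau)^{\gamma}$ for $\tau$ large. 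Substituting this, together with $F^{3/2}(\tau)\ge c_1^{3/2}e^{3\beta\tau}$, $\sqrt{a(\tau)}\le\sqrt{c_2}\,(\log\tau)^{\gamma}$ and $\sqrt{F(v)}\le\sqrt{c_2}\,e^{\beta v}$, into the identity above, one obtains for $v$ large
$$
\sqrt{F(v)}\int_v^{+\infty}\frac{\sqrt{a(\tau)}}{F^{3/2}(\tau)}\Big(\int_0^\tau\sqrt{F(\sigma)}\sqrt{a(\sigma)}\,d\sigma\Big)\,d\tau
\le C_3\,e^{\beta v}\int_v^{+\infty}(\log\tau)^{2\gamma}e^{-2\beta\tau}\,d\tau .
$$
Finally, since $(\log\tau)^{2\gamma}\le e^{\beta\tau/2}$ for $\tau$ large, the remaining integral is at most $\frac{2}{3\beta}e^{-3\beta v/2}$, so the whole expression is $\le C_4\,e^{-\beta v/2}\to0$; being nonnegative, it has limit $0$, which is precisely~\eqref{con-blowup1}.

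The only step that is not mechanical is the change of variable in the first paragraph and the observation that it is actually indispensable here. Once~\eqref{con-blowup1} is recast purely in terms of $F$ and $a$, the margin is generous: the dangerous factor $\sqrt{F(v)}\sim e^{\beta v}$ is swamped by the $e^{-2\beta\tau}$ decay produced by $F^{-3/2}$ in the outer integral, leaving a full surviving factor $e^{-\beta v/2}\to0$, so the poly-logarithmic contributions of $a$ never matter.
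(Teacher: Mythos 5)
Your proof is correct, but it follows a genuinely different route from the paper's. The paper stays in the variable $u$: it first shows that the tail integral $\int_u^{+\infty}\frac{\int_0^t\sqrt{F(g(s))}\,ds}{F^{3/2}(g(t))}\,dt$ tends to $0$ (the integrand is in $L^1$ thanks to the lower bound $F(g(t))\geq Ce^{2\beta t^{1-\eps}}$ from Lemma~\ref{growlog}), then resolves the resulting $\infty\cdot 0$ product by l'H\^opital's rule, which reduces \eqref{con-blowup1} to
$\lim_{u\to+\infty}\int_0^u\sqrt{F(g(s))}\,ds\,\big/\,\big(f(g(u))g'(u)\big)=0$; this last limit is then estimated using $g'=1/\sqrt{a\circ g}$, the bound $\int_0^u\sqrt{F(g(s))}\,ds\leq C+u\sqrt{F(g(u))}$, and the inequality $g^{-1}(t)\leq t^{1/(1-\eps)}$ from Lemma~\ref{growlog}. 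You instead eliminate $g$ outright through the exact substitutions $\sigma=g(s)$, $\tau=g(t)$, $v=g(u)$ (legitimate, since $g$ is an increasing bijection with $g(0)=0$ and $(g^{-1})'=\sqrt{a}$), recasting \eqref{con-blowup1} as a statement about $F$ and $a$ alone, and then close with direct exponential estimates leaving a full margin $e^{-\beta v/2}$. Your version buys simplicity and robustness: no l'H\^opital, no need for the asymptotics of Lemma~\ref{growlog} beyond $g'=1/\sqrt{a\circ g}$, and it makes transparent that the polylogarithmic weight $a$ is irrelevant; the paper's version stays closer to the template used in the other blow-up propositions and reuses bounds already established there. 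Only your side remark that the change of variables is ``indispensable'' overstates the case --- the paper's l'H\^opital argument shows the reduction can be avoided --- but this does not affect the validity of your proof.
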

\begin{proof}
We denote by $C$ a positive constant and by $C'$ a constant without any sign
restriction, which may vary from one place to another.
For every $\eps\in (0,1)$ there exists $R=R(\eps)>0$  such that
$$
C e^{2\beta s^{1-\eps}}\leq F(g(s))\leq C e^{2\beta s},\qquad\text{for every $s\geq R$.}
$$
Observe that 
$$
\lim_{u\to+\infty} \int_u^{+\infty}\frac{\int_0^t \sqrt{F(g(s))}ds}{F^{3/2}(g(t))}dt
=\lim_{u\to+\infty} \int_1^{+\infty}\frac{\int_0^t \sqrt{F(g(s))}ds}{F^{3/2}(g(t))}\chi_{(u,+\infty)}dt=0.
$$
In fact, the integrand belongs to $L^1(1,+\infty)$ since, for $R$ big enough and all $t$ large, we have
$$
\frac{\int_0^t \sqrt{F(g(s))}ds}{(F(g(t)))^{3/2}}\leq \frac{C+(t-R)\sqrt{F(g(t))}}{(F(g(t)))^{3/2}}
\leq C\frac{t}{F(g(t))}\leq C\frac{t}{e^{2\beta t^{1-\eps}}}.
$$ 
Then, by virtue of l'H\v opital rule, we get
\begin{align*}
\lim_{u\to+\infty}\sqrt{F(g(u))}&\int_u^{+\infty}\frac{\int_0^t \sqrt{F(g(s))}ds}{F^{3/2}(g(t))}dt \\
&=\lim_{u\to+\infty}\frac{-F^{-3/2}(g(u))\int_0^u \sqrt{F(g(s))}ds}{-\frac{1}{2}F^{-3/2}(g(u)) f(g(u))g'(u)} \\
&=2\lim_{u\to+\infty}\frac{\int_0^u \sqrt{F(g(s))}ds}{f(g(u))g'(u)}= 0.
\end{align*}
We thus only need to justify this last limit. Observe that, if $\eps\in (0,1)$, from $s^{1-\eps}\leq g(s)$ for $s$ large enough (see Lemma \ref{growlog}), we get
$g^{-1}(s)\leq s^{1/(1-\eps)}$. Then, for $R$ large enough, we have
\begin{align*}
0\leq \limsup_{u\to+\infty}\frac{\int_0^u \sqrt{F(g(s))}ds}{f(g(u))g'(u)}&\leq 
\limsup_{u\to+\infty}\frac{\sqrt{a(g(u))}(C+(u-R)\sqrt{F(g(u))})}{f(g(u))} \\
& \leq C \limsup_{u\to+\infty}\frac{u\sqrt{a(g(u))}\sqrt{F(g(u))}}{f(g(u))} \\
& = C \limsup_{t\to+\infty}\frac{g^{-1}(t)\sqrt{a(t)}\sqrt{F(t)}}{f(t)} \\
& \leq C\limsup_{t\to+\infty}\frac{t^{\frac{1}{1-\eps}}\sqrt{a(t)}\sqrt{F(t)}}{f(t)} \\
&=\frac{\sqrt{ f_{\infty}a_{\infty}}}{\sqrt{2\beta}}\lim_{t\to+\infty}\frac{t^{\frac{1}{1-\eps}}(\log t)^\gamma e^{\beta t}}{f_{\infty}e^{2\beta t}}=0.
\end{align*}
This concludes the proof.
\end{proof}

\subsection{Proof of Theorem~\ref{blowII}}

\noindent
Suppose that \eqref{segnomonot} hold with $R=0$ and that the existence conditions of Theorem \ref{main-ex} are satisfied. As before, $\eta$ denotes the unique solution to \eqref{ODEeta}. 
We consider the 
following notations, where $F(t)=\int_0^t f(\sigma)d\sigma$ and
\begin{align*}
& \psi(t):=\int_t^{+\infty}\frac{ds}{\sqrt{2F(g(s))}}, \qquad\,\,\,
\Lambda(t):=\frac{\int_{0}^t\sqrt{2F(g(s))}ds}{F(g(t))}, \quad t>0, \\
& J(t):=\frac{N-1}{2}\int_{0}^t\Lambda(\eta(s))ds,  \quad\,\,
B(t):=\frac{f(g(t))g'(t)}{\sqrt{2F(g(t))}}, \quad t>0.
\end{align*}
Notice that our condition {\bf E} holds for every choice of $r>0$ and
$$
\psi(t)<+\infty,\,\,\,\,\,\forall t>0
\,\,\quad\Longleftrightarrow \quad\,\,
\int_{g^{-1}(r)}^{+\infty}\frac{ds}{\sqrt{F_r(g(s))}}<+\infty,\,\,\,\,\,\forall r>0,\quad F_r(t)=\int_r^t f(\sigma)d\sigma,
$$
justifying the finiteness of $\psi(t)$ at each $t>0$.

In the following, we shall denote by $\sigma(x)$ the orthogonal projection on the boundary
$\partial\Omega$ of a given point $x\in\Omega$. Moreover, we shall indicate by ${\mathcal H}:\partial\Omega\to\R$ 
the mean curvature of $\partial\Omega$ (see \cite{thorpe} for a definition of mean curvature). In particular, the function $x\mapsto {\mathcal H}(\sigma(x))$
is well defined on $\Omega$. We can state the following
\begin{proposition}\label{fin1}
	Let $\Omega$ be a bounded domain of $\R^N$ of class $C^4$
	and assume that \eqref{segnomonot} hold with $R=0$ and that one of the existence conditions of Theorem \ref{main-ex} is satisfied. 
	Let us set
	$$
	\T(x):=\frac{\eta({\rm d}(x,\partial\Omega))}{ a^{\frac 12}(\min\{u(x),
	g(\eta({\rm d} (x,\partial\Omega)-{\mathcal H} (\sigma(x))J({\rm d}(x,\partial\Omega))))\})},
	\qquad x\in\Omega,$$
where $\sigma(x)$ denotes 
the projection on $\partial\Omega$ of a $x\in\Omega$ and ${\mathcal H}$ is the mean curvature of $\partial\Omega$.
	Then there exists a positive constant $L$ such that
	\begin{equation*}
	|u(x)-g\circ\eta({\rm d}(x,\partial\Omega)-{\mathcal H}(\sigma(x))J({\rm d}(x,\partial\Omega)))|\leq L\T(x)o({\rm d}(x,\partial\Omega)),
	\end{equation*}
	whenever ${\rm d}(x,\partial\Omega)$ goes to zero if the following conditions hold
\begin{align}
& \liminf_{t\to+\infty}\frac{\psi(\nu t)}{\psi(t)}>1,\quad\text{for all $\nu\in (0,1)$}, \label{primacurv}\\
& \lim_{\delta\to 0} \frac{B(\eta(\delta(1+o(1))))}{B(\eta(\delta))}=1, \label{secondacurv} \\
& \limsup_{t\to+\infty} B(t)\Lambda(t)<+\infty.    \label{terzacurv}
\end{align}
\end{proposition}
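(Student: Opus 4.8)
The plan is to transfer~\eqref{prob} to the associated semi-linear problem~\eqref{prob-semi}, invoke the second-order boundary expansion of its large solution, and then carry the estimate back to $u$ through the Lipschitz-type inequality~\eqref{lipschitz-g}.

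First I would apply Lemma~\ref{colleg}: a solution $u\in C^2(\Omega)$ of~\eqref{prob} corresponds to $v:=g^{-1}(u)\in C^2(\Omega)$ solving $\Delta v=h(v)$ in $\Omega$ with $v(x)\to+\infty$ as ${\rm d}(x,\partial\Omega)\to0$, where $h(s)=f(g(s))/\sqrt{a(g(s))}$. The substitution $\tau=g(\sigma)$ in~\eqref{cauchy} gives $\int_0^t h(\sigma)\,d\sigma=\int_0^{g(t)}f(\tau)\,d\tau=F(g(t))$, so the primitive of $h$ is $F\circ g$; hence $\eta$ from~\eqref{ODEeta} is exactly the one-dimensional boundary-layer profile of $\Delta v=h(v)$ (it satisfies $\eta''=h(\eta)$, $\eta(0^+)=+\infty$), and $\psi$, $\Lambda$, $B$ are precisely the auxiliary functions attached to $h$ and its primitive. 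Moreover, by~\eqref{hprimo} and~\eqref{segnomonot} with $R=0$, using $g(0)=0$ and the strict monotonicity of $g$, one gets $h>0$ and $h'\geq0$ on $(0,+\infty)$, so $h$ is positive and nondecreasing there; and condition {\bf E} (which holds under the existence hypotheses) is equivalent to $\psi(t)<+\infty$ for all $t>0$, so $\eta$ is well defined.

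Next I would establish, or quote from the semi-linear theory (cf.\ the use of \cite[Theorem 1.10]{CD} for the leading-order case), the second-order expansion: under the $C^4$ regularity of $\partial\Omega$ and conditions~\eqref{primacurv}--\eqref{terzacurv},
$$v(x)=\eta\bigl({\rm d}(x,\partial\Omega)-{\mathcal H}(\sigma(x))J({\rm d}(x,\partial\Omega))\bigr)+o({\rm d}(x,\partial\Omega)),\qquad\text{as }{\rm d}(x,\partial\Omega)\to0.$$
The natural route is a comparison argument near $\partial\Omega$: for $\phi$ of the form ${\rm d}(x,\partial\Omega)-{\mathcal H}(\sigma(x))J({\rm d}(x,\partial\Omega))\pm\theta\,\omega({\rm d}(x,\partial\Omega))$, with a suitable small correction $\omega$ and $\theta>0$, the function $w=\eta\circ\phi$ satisfies $\Delta w=h(\eta(\phi))|D\phi|^2+\eta'(\phi)\Delta\phi$, while $C^4$ regularity of the boundary yields $\Delta{\rm d}(x)=-(N-1){\mathcal H}(\sigma(x))+O({\rm d}(x,\partial\Omega))$ close to $\partial\Omega$; the precise choice $J(t)=\frac{N-1}{2}\int_0^t\Lambda(\eta(s))\,ds$ is the one cancelling the leading curvature contribution, and~\eqref{primacurv},~\eqref{secondacurv},~\eqref{terzacurv} guarantee, in turn, the stability of $\eta$ under perturbations of its argument, the stability of $B\circ\eta$ under a multiplicative perturbation of its argument, and the boundedness of the product $B\Lambda$ governing the size of the curvature correction; together these force all residual terms to be of lower order, so that $w$ is a sub/supersolution up to an $o({\rm d}(x,\partial\Omega))$ shift. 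I expect this step --- securing the semi-linear expansion --- to be the main obstacle; the rest is bookkeeping.

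Finally I would carry the estimate to $u$. Since the existence hypotheses place $a$ in one of the regimes~\eqref{1.7},~\eqref{exgrow},~\eqref{loggrow}, the inequality~\eqref{lipschitz-g} is available, and for $x$ close enough to $\partial\Omega$ both $v(x)$ and $\tau_2:=\eta\bigl({\rm d}(x,\partial\Omega)-{\mathcal H}(\sigma(x))J({\rm d}(x,\partial\Omega))\bigr)$ are large and positive (as $J(0^+)=0$). Taking $\tau_1=v(x)$ in~\eqref{lipschitz-g} gives, using $u=g(v)$,
$$|u(x)-g(\tau_2)|=|g(v(x))-g(\tau_2)|\leq L\,\frac{|v(x)-\tau_2|}{a^{1/2}(\min\{u(x),g(\tau_2)\})}=\frac{L\,o({\rm d}(x,\partial\Omega))}{a^{1/2}(\min\{u(x),g(\tau_2)\})},$$
the last equality by the expansion of the previous step. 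As ${\rm d}(x,\partial\Omega)\to0$ we have $\eta({\rm d}(x,\partial\Omega))\to+\infty$, hence $\eta({\rm d}(x,\partial\Omega))\geq1$ near $\partial\Omega$, and therefore the right-hand side is at most
$$L\,\frac{\eta({\rm d}(x,\partial\Omega))}{a^{1/2}(\min\{u(x),g(\tau_2)\})}\,o({\rm d}(x,\partial\Omega))=L\,\T(x)\,o({\rm d}(x,\partial\Omega)),$$
which is the claimed estimate.
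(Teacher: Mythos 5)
Your overall architecture is the same as the paper's: reduce to the semi-linear problem \eqref{prob-semi} via Lemma~\ref{colleg}, note that \eqref{segnomonot} with $R=0$ makes $h$ positive and nondecreasing on $(0,+\infty)$ and that {\bf E} gives the Keller--Osserman condition, obtain a second-order boundary expansion for $v$, and carry it back to $u$ through \eqref{lipschitz-g}. However, there is a genuine gap at the central step. You assert the semi-linear expansion in the form $v(x)=\eta\bigl({\rm d}(x,\partial\Omega)-{\mathcal H}(\sigma(x))J({\rm d}(x,\partial\Omega))\bigr)+o({\rm d}(x,\partial\Omega))$, i.e.\ with an \emph{absolute} error $o({\rm d})$, and you only sketch a sub/supersolution comparison argument for it, yourself flagging it as the main obstacle. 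This is stronger than what is actually available: the result the paper quotes (the theorem of Bandle and Marcus \cite{curvature}, whose hypotheses are exactly \eqref{primacurv}, \eqref{secondacurv}, \eqref{terzacurv} together with $h$ positive, nondecreasing and Keller--Osserman) yields the weighted estimate \eqref{vsvilup}, namely $|v(x)-\eta(\cdot)|\leq \eta({\rm d}(x,\partial\Omega))\,o({\rm d}(x,\partial\Omega))$. Since $\eta({\rm d})\to+\infty$, and in the polynomial regime $\eta({\rm d})\,{\rm d}$ itself blows up for $p<2k+3$, your $o({\rm d})$ claim is far beyond the quoted theorem and is not established by the sketch; indeed the factor $\eta({\rm d}(x,\partial\Omega))$ sitting in the numerator of $\T(x)$ is precisely the trace of the weaker, correct error term (if your stronger expansion were true, that factor would be superfluous).

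The good news is that the gap is easy to close and does not require your stronger claim: your final step only needs the weighted estimate. Quoting \cite{curvature} to get \eqref{vsvilup}, and then applying \eqref{lipschitz-g} with $\tau_1=v(x)$ and $\tau_2=\eta\bigl({\rm d}(x,\partial\Omega)-{\mathcal H}(\sigma(x))J({\rm d}(x,\partial\Omega))\bigr)$, gives $|u(x)-g(\tau_2)|\leq L\,|v(x)-\tau_2|/a^{1/2}(\min\{u(x),g(\tau_2)\})\leq L\,\T(x)\,o({\rm d}(x,\partial\Omega))$, which is exactly the paper's proof; your $\eta\geq 1$ padding step then becomes unnecessary. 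As written, though, the proposal replaces the citation that the hypotheses \eqref{primacurv}--\eqref{terzacurv} are tailored for with an unproven (and overstated) expansion, so it is incomplete.
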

\begin{proof}
Assuming that $\Omega$ is a domain of class $C^4$, by using the main result of \cite{curvature} due to Bandle and Marcus,
if the problem $\Delta v=h(v)$ with $h(s)=f(g(s))a^{-1/2}(g(s))$ positive  
and nondecreasing on $(0,+\infty)$, satisfying the Keller-Osserman  condition and \eqref{primacurv}, \eqref{secondacurv} and \eqref{terzacurv}
then it follows
\begin{align}
|v(x)-\eta({\rm d}(x,\partial\Omega)-{\mathcal H}(\sigma(x))J({\rm d}(x,\partial\Omega)))| 
&\leq \eta({\rm d}(x,\partial\Omega))o({\rm d}(x,\partial\Omega)) ,
\label{vsvilup}
\end{align}
provided that ${\rm d}(x,\partial\Omega)$ goes to zero. 
The proof then follows from \eqref{lipschitz-g} and \eqref{vsvilup}.
\end{proof}
\vskip5pt
\noindent
In the particular case where~\eqref{caso:potpot} is satisfied with $p>k+1$, we have the following

\begin{proposition}
\label{precise-potpot}
Let $\Omega$ be a bounded domain of $\R^N$ which satisfies an inner and an outer sphere
condition at each point of the boundary $\partial\Omega$. Therefore, if \eqref{caso:potpot} hold 
with $p>2k+3$, any solution $u\in C^2(\Omega)$ to~\eqref{prob} satisfies
$$
u(x)=\frac{\Gamma}{({\rm d}(x,\partial\Omega))^{\frac{2}{p-k-1}}}(1+o(1)),\quad\,\,
\Gamma:=\left[\frac{p-k-1}{\sqrt{2(p+1)}} \frac{\sqrt{f_{\infty}}}{\sqrt {a_{\infty}}}\right]^{\frac{2}{k+1-p}}>0,
$$
whenever $x$ approaches the boundary $\partial\Omega$.
\end{proposition}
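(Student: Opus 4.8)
The plan is to combine the first-order boundary expansion of Theorem~\ref{blowI}(2) with a precise analysis of the profile $g\circ\eta$ near the origin. Indeed, conditions \eqref{caso:potpot} with $p>2k+3$ are exactly those under which Proposition~\ref{blow-polpol} verifies \eqref{con-blowup1}, so that any solution $u\in C^2(\Omega)$ to \eqref{prob} satisfies
$$
u(x)=g\circ\eta({\rm d}(x,\partial\Omega))+o(1),\qquad\text{as ${\rm d}(x,\partial\Omega)\to 0$.}
$$
Since the leading term will blow up, it is enough to prove that $g\circ\eta(t)=\Gamma\,t^{-\frac{2}{p-k-1}}(1+o(1))$ as $t\to 0^+$: writing $u(x)=\Gamma\,t^{-\frac{2}{p-k-1}}(1+o(1))+o(1)$ with $t={\rm d}(x,\partial\Omega)$, the additive $o(1)$ is then absorbed into the multiplicative error because $t^{-\frac{2}{p-k-1}}\to+\infty$.

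First I would pin down the asymptotics of $F\circ g$ and of $\eta$. Arguing exactly as in the proof of Proposition~\ref{potpot} (using \eqref{caso:potpot} and Lemma~\ref{l2.3}), one has
$$
\lim_{s\to+\infty}\frac{F(g(s))}{s^{\frac{2(p+1)}{k+2}}}=\frac{f_\infty g_\infty^{p+1}}{p+1},\qquad
g_\infty=\Big(\frac{k+2}{2\sqrt{a_\infty}}\Big)^{\frac{2}{k+2}},
$$
so, setting $m:=\frac{p+1}{k+2}$ (note $m>1$, since $p>2k+3>k+1$) and $A:=\big(\tfrac{2f_\infty g_\infty^{p+1}}{p+1}\big)^{1/2}$, we get $\sqrt{2F(g(s))}=A\,s^{m}(1+o(1))$ as $s\to+\infty$. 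Next, since $\eta'=-\sqrt{2F\circ g\circ\eta}<0$ with $\eta(t)\to+\infty$ as $t\to 0^+$, dividing \eqref{ODEeta} by $\sqrt{2F(g(\eta(t)))}$ and integrating shows that, for $t>0$ small, $\eta$ is the inverse of the function $\psi(t)=\int_t^{+\infty}\frac{ds}{\sqrt{2F(g(s))}}$, which is finite by condition {\bf E}; that is,
$$
\int_{\eta(t)}^{+\infty}\frac{ds}{\sqrt{2F(g(s))}}=t,\qquad\text{for all $t>0$ small.}
$$
Inserting $\sqrt{2F(g(s))}=A s^{m}(1+o(1))$ and estimating the improper integral from above and below (for every $\varepsilon>0$ there is $S>0$ with $(A-\varepsilon)s^m\le\sqrt{2F(g(s))}\le(A+\varepsilon)s^m$ for $s\ge S$, and then one integrates over $[\eta(t),+\infty)$ once $\eta(t)\ge S$, using $m>1$) yields
$$
t=\frac{\eta(t)^{1-m}}{A(m-1)}(1+o(1)),\qquad\text{hence}\qquad \eta(t)=\big[A(m-1)t\big]^{-\frac{1}{m-1}}(1+o(1)),
$$
as $t\to 0^+$, with $\frac{1}{m-1}=\frac{k+2}{p-k-1}$.

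Finally I would apply $g$. By Lemma~\ref{l2.3}, $g(\eta(t))=g_\infty\,\eta(t)^{\frac{2}{k+2}}(1+o(1))$; combining this with the previous display gives
$$
g\circ\eta(t)=g_\infty\big[A(m-1)\big]^{-\frac{2}{p-k-1}}\,t^{-\frac{2}{p-k-1}}(1+o(1)),\qquad t\to 0^+.
$$
It then remains to check $g_\infty\big[A(m-1)\big]^{-\frac{2}{p-k-1}}=\Gamma$: substituting $m-1=\frac{p-k-1}{k+2}$, $A=\frac{\sqrt{2f_\infty}}{\sqrt{p+1}}\,g_\infty^{(p+1)/2}$ and $g_\infty=\big(\frac{k+2}{2\sqrt{a_\infty}}\big)^{2/(k+2)}$, the powers of $g_\infty$ collapse to $g_\infty^{1-\frac{p+1}{p-k-1}}=g_\infty^{-\frac{k+2}{p-k-1}}$ and a one-line simplification produces $\big[\frac{p-k-1}{\sqrt{2(p+1)}}\frac{\sqrt{f_\infty}}{\sqrt{a_\infty}}\big]^{-\frac{2}{p-k-1}}=\Gamma$. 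Taking $t={\rm d}(x,\partial\Omega)$ and recalling the first display completes the proof.

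The main obstacle is the passage in the second paragraph from the asymptotic relation $\sqrt{2F(g(s))}=A s^{m}(1+o(1))$ to the sharp asymptotics of the inverse function $\eta$: one must control how the internal $o(1)$ propagates through the improper integral $\int_{\eta(t)}^{+\infty}$, which is precisely where the condition $m>1$ (equivalently $p>k+1$, guaranteeing convergence of the tail) and the two-sided squeeze are essential. The remaining work is bookkeeping of exponents and the explicit identification of the constant $\Gamma$.
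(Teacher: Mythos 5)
Your proposal is correct and follows essentially the same route as the paper: both reduce to the expansion $u(x)=g\circ\eta({\rm d}(x,\partial\Omega))+o(1)$ valid for $p>2k+3$, then extract $\eta(t)\sim \Gamma_0\, t^{\frac{k+2}{k+1-p}}$ from the identity $\int_{\eta(t)}^{+\infty}\frac{ds}{\sqrt{2F(g(s))}}=t$ together with Lemma~\ref{l2.3}, and finally compose with $g$ to identify $\Gamma$. The only difference is cosmetic: the paper obtains the asymptotics of $\eta$ by l'H\^opital's rule, while you use a two-sided squeeze of the tail integral; your constant bookkeeping matches the paper's $g_\infty\Gamma_0^{2/(k+2)}=\Gamma$.
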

\begin{proof}
Let $\eta$ denote the unique solution to 
$$
\eta'=-\sqrt{2F\circ g\circ\eta},\qquad \lim_{t\to 0^+}\eta(t)=+\infty.
$$
Let us prove that
\begin{equation*}
\lim_{t\to 0^+}\frac{\eta(t)}{t^{\frac{k+2}{k+1-p}}}=\Gamma_0,\qquad
\Gamma_0=\left[\frac{p-k-1}{k+2}\frac{\sqrt{2f_{\infty}}}{\sqrt{p+1}}\left(\frac{k+2}{2\sqrt{a_{\infty}}}\right)^{\frac{p+1}{k+2}}\right]^{\frac{k+2}{k+1-p}}.
\end{equation*}
For any $t>0$ sufficiently close to $0$ we have $2F(g(\eta(t)))>0$ and, from
$\frac{\eta'}{\sqrt{2F\circ g\circ\eta}}=-1$, 
\begin{equation*}
\int_{\eta(t)}^{+\infty}\frac{d\xi}{\sqrt{2F(g(\xi))}}=\int_t^0\frac{\eta'(s)}{\sqrt{2F(g(\eta(s)))}}=t.
\end{equation*}
Furthermore, from \eqref{caso:potpot} and \eqref{1.8}, we have
$$
\lim_{t\to 0^+}\frac{\sqrt{2F(g(\eta(t)))}}{\eta(t)^{\frac{p+1}{k+2}}}=
\lim_{s\to+\infty}\frac{\sqrt{2F(g(s))}}{s^{\frac{p+1}{k+2}}}=\frac{\sqrt{2f_{\infty}}g_{\infty}^{\frac{p+1}{2}}}{\sqrt{p+1}},
$$
where $g_{\infty}$ was introduced in Lemma~\ref{l2.3}. Whence, recalling that $p>k+1$, we get
\begin{align}
	\label{limetaa}
\lim_{t\to 0^+}\frac{\eta(t)}{t^{\frac{k+2}{k+1-p}}}&=\lim_{t\to 0^+}\left[\frac{\eta(t)^{\frac{k+1-p}{k+2}}}{t}\right]^{\frac{k+2}{k+1-p}}
=\lim_{t\to 0^+}\left[\frac{\eta(t)^{\frac{k+1-p}{k+2}}}{\int_{\eta(t)}^{+\infty}\frac{d\xi}{\sqrt{2F(g(\xi))}}}\right]^{\frac{k+2}{k+1-p}}  \\
&=\lim_{t\to 0^+}\left[\frac{\frac{k+1-p}{k+2}\eta(t)^{\frac{-p-1}{k+2}}\eta'(t)}{
-\frac{1}{\sqrt{2F(g(\eta(t)))}}\eta'(t)}\right]^{\frac{k+2}{k+1-p}} \notag\\
&=\lim_{t\to 0^+}\left[\frac{\frac{p-k-1}{k+2}\sqrt{2F(g(\eta(t)))}}{\eta(t)^{\frac{p+1}{k+2}}}\right]^{\frac{k+2}{k+1-p}}=\Gamma_0. \notag
\end{align}
Taking into account Lemma~\ref{l2.3}, we thus obtain
\begin{equation*}
g\circ\eta({\rm d}(x,\partial\Omega))=
g_{\infty}\Gamma_0^{\frac 2{k+2}}({\rm d}(x,\partial\Omega))^{\frac{2}{k+1-p}}(1+o(1)), \quad \text{as ${\rm d}(x,\partial\Omega)\to 0$}.
\end{equation*}
Since $p>2k+3$ by virtue of \eqref{blowupconcl-u} it holds $u(x)=g\circ\eta({\rm d}(x,\partial\Omega))+o(1)$
as $x$ approaches the boundary $\partial\Omega$. Combining these equations we get the assertion. 
\end{proof}
\vskip5pt
\noindent

\begin{proposition}
	\label{blowII-prop}
	Let $\Omega$ be a bounded domain of $\R^N$ of class $C^4$,
	assume that \eqref{caso:potpot} hold with $p>k+1$ and that \eqref{segnomonot} are satisfied with $R=0$. Then the following facts hold 
	\begin{enumerate}
        \item There exists a positive constant $L$ such that
	\begin{equation*}
	|u(x)-g\circ\eta({\rm d}(x,\partial\Omega)-{\mathcal H}(\sigma(x))J({\rm d}(x,\partial\Omega)))|\leq L\T(x)o({\rm d}(x,\partial\Omega)),
	\end{equation*}
	whenever ${\rm d}(x,\partial\Omega)$ goes to zero, where
        $$\T(x):=         \frac{({\rm d}(x,\partial\Omega))^{\frac{k+2}{k+1-p}}}{\min\{u^{k/2}(x)),
	({\rm d}(x,\partial\Omega)-{\mathcal H}(\sigma(x))J({\rm d}(x,\partial\Omega)))^{k/(k+1-p)}\}},
	\qquad x\in\Omega,$$
        where $\sigma(x)$ denotes 
the projection on $\partial\Omega$ of a $x\in\Omega$ and ${\mathcal H}$ is the mean curvature of $\partial\Omega$.
	\item If $k+3<p\leq 2k+3$, then 
	$$
	u(x)=\Gamma\frac{1}{({\rm d}(x,\partial\Omega))^{\frac{2}{p-k-1}}}(1+o(1))
	+\Gamma' {\mathcal H}(\sigma(x))({\rm d}(x,\partial\Omega))^{\frac{p-k-3}{p-k-1}}(1+o(1)),
	$$
	whenever $x$ approaches $\partial\Omega$, where $\Gamma$ and $\Gamma '$ are as defined in \eqref{defGamma}.
	\vskip4pt
	\item If $p\leq k+3$, then 
	$$
	u(x)=\Gamma\frac{1}{({\rm d}(x,\partial\Omega))^{\frac{2}{p-k-1}}}(1+o(1))
	+\Gamma'\frac{{\mathcal H}(\sigma(x)) }{({\rm d}(x,\partial\Omega))^{\frac{3+k-p}{p-k-1}}}(1+o(1)),
	$$
	whenever $x$ approaches $\partial\Omega$.
\end{enumerate}
\end{proposition}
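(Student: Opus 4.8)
The plan is to obtain part (1) as a direct specialization of Proposition~\ref{fin1}, and then to read off parts (2) and (3) by substituting the power asymptotics of $\eta$, $g$ and $J$ into the curvature-corrected estimate of part (1). Throughout write $d:={\rm d}(x,\partial\Omega)$ and $\tau(x):=d-{\mathcal H}(\sigma(x))J(d)$, and recall that under \eqref{caso:potpot} one has $g(s)\sim g_\infty s^{2/(k+2)}$ and $a^{1/2}(s)\sim\sqrt{a_\infty}\,s^{k/2}$ (Lemma~\ref{l2.3}), while $\eta(t)\sim\Gamma_0 t^{(k+2)/(k+1-p)}$ and $g\circ\eta(t)\sim\Gamma\,t^{-2/(p-k-1)}$ (from~\eqref{limetaa} and Lemma~\ref{l2.3}).

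First I would check the hypotheses of Proposition~\ref{fin1}. Since \eqref{segnomonot} holds with $R=0$, the function $h(s)=f(g(s))a^{-1/2}(g(s))$ is positive on $(0,+\infty)$, and by \eqref{hprimo} together with $2f'a-fa'\ge0$ on $(0,+\infty)$ it is nondecreasing there; condition {\bf E} holds by Proposition~\ref{potpot} because $p>k+1$. For the three curvature conditions, from $\sqrt{F(g(s))}\sim C s^{(p+1)/(k+2)}$ (proof of Proposition~\ref{potpot}) one gets $\psi(t)\sim C' t^{(k+1-p)/(k+2)}$, so $\psi(\nu t)/\psi(t)\to\nu^{(k+1-p)/(k+2)}>1$ for $\nu\in(0,1)$, giving \eqref{primacurv}; a direct computation gives $B(t)\sim C t^{(p-k-1)/(k+2)}$ and $\Lambda(t)\sim C t^{(k+1-p)/(k+2)}$, so $B(t)\Lambda(t)$ stays bounded, giving \eqref{terzacurv}; and since $B$ is regularly varying at $+\infty$ and $\eta$ is regularly varying at $0^+$, the ratio in \eqref{secondacurv} tends to $1$. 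Proposition~\ref{fin1} then yields $|u(x)-g\circ\eta(\tau(x))|\le L\,\widetilde{\T}(x)\,o(d)$ with $\widetilde{\T}(x)=\eta(d)/a^{1/2}(\min\{u(x),g\circ\eta(\tau(x))\})$; rewriting the denominator via $a^{1/2}(w)\sim\sqrt{a_\infty}w^{k/2}$ and $\big(g\circ\eta(\tau)\big)^{k/2}\sim C\,\tau^{k/(k+1-p)}$, the numerator via $\eta(d)\sim\Gamma_0 d^{(k+2)/(k+1-p)}$, and absorbing all constants into $L$, turns $\widetilde{\T}$ into the function $\T(x)$ of the statement. This proves part (1).

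For parts (2) and (3) I would first note $\Lambda(\eta(s))\sim C s$ (the two exponents cancel), hence $J(t)=\tfrac{N-1}{2}\int_0^t\Lambda(\eta(s))\,ds\sim C t^2$; in particular $\tau(x)=d\,(1+O(d))>0$ for $d$ small. Writing $\phi=g\circ\eta$ one has $\phi'(t)=-\sqrt{2F(g(\eta(t)))}\,a^{-1/2}(g(\eta(t)))\sim -\tfrac{2}{p-k-1}\Gamma\,t^{-2/(p-k-1)-1}$, so a mean-value expansion gives
\begin{equation*}
g\circ\eta(\tau(x))=\phi(d)-\phi'(\xi)\,{\mathcal H}(\sigma(x))J(d)=\Gamma\,d^{-\frac{2}{p-k-1}}(1+o(1))+\Gamma'\,{\mathcal H}(\sigma(x))\,d^{\frac{p-k-3}{p-k-1}}(1+o(1)),
\end{equation*}
where, after substituting the constants of Lemma~\ref{l2.3} and $\Gamma_0$, the coefficient of the curvature term comes out to be exactly $\Gamma'=\tfrac{2(N-1)}{p+k+3}\Gamma$ as in \eqref{defGamma}; the two ranges $k+3<p\le2k+3$ and $k+1<p\le k+3$ differ only in whether $d^{(p-k-3)/(p-k-1)}$ is displayed as a positive power or as $d^{-(3+k-p)/(p-k-1)}$. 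A short bootstrap on part (1) gives $\T(x)\asymp d^{-2/(p-k-1)}$: since $u$ and $\phi(\tau)$ both tend to $+\infty$, the crude bound $\widetilde{\T}(x)=o(\eta(d))$ fed back into part (1) already forces $u(x)=\Gamma d^{-2/(p-k-1)}(1+o(1))$, whence $u^{k/2}(x)\asymp d^{k/(k+1-p)}\asymp\tau(x)^{k/(k+1-p)}$, so the minimum in $\T$ is of order $d^{k/(k+1-p)}$ and $\T(x)\asymp\eta(d)\,d^{-k/(k+1-p)}\asymp d^{-2/(p-k-1)}$. Consequently the remainder in part (1) is $L\,\T(x)\,o(d)=o(d^{(p-k-3)/(p-k-1)})$, and adding it to the expansion of $g\circ\eta(\tau(x))$ yields parts (2) and (3).

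The step I expect to be the main obstacle is this last one: arranging the various $o(\cdot)$ and $O(\cdot)$ terms so that the curvature contribution $\Gamma'{\mathcal H}d^{(p-k-3)/(p-k-1)}$ is genuinely the second-order term and is not absorbed into the remainder. This hinges on the bootstrap estimate $\T(x)\asymp d^{-2/(p-k-1)}$, which, when $p$ is close to $k+1$, needs a careful iteration of the bound from part (1) (a single crude pass is not enough); the algebra needed to identify $\Gamma$ and $\Gamma'$ with the constants in \eqref{defGamma} is lengthy but routine.
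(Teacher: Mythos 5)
Your treatment of part (1) coincides with the paper's: you apply Proposition~\ref{fin1}, verify \eqref{primacurv}, \eqref{secondacurv}, \eqref{terzacurv} from the power asymptotics (the paper computes the same limits, in particular $\lim B\Lambda=\tfrac{2(p+1)}{p+k+3}$), and convert the weight $\eta(d)/a^{1/2}(\min\{u,g\circ\eta(\tau)\})$ into $\T(x)$ via Lemma~\ref{l2.3} and \eqref{limetaa}, enlarging $L$; that part is fine. The genuine gap is in your derivation of (2)--(3). Writing $d:={\rm d}(x,\partial\Omega)$ and $\tau:=d-\mathcal{H}(\sigma(x))J(d)$, you need the remainder $L\T(x)o(d)$ to be $o\big(d^{(p-k-3)/(p-k-1)}\big)$, and you obtain this only through the bootstrap claim $\T(x)\asymp d^{-2/(p-k-1)}$, i.e.\ $u(x)=\Gamma d^{-2/(p-k-1)}(1+o(1))$. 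But your ``crude pass'' uses only $a^{1/2}(\min\{u,g\circ\eta(\tau)\})\to\infty$, which gives $|u-g\circ\eta(\tau)|\le \eta(d)\,o(d)$; since $\eta(d)\,d\asymp d^{(p-2k-3)/(p-k-1)}$ while $g\circ\eta(\tau)\asymp d^{-2/(p-k-1)}$, this error is $o(g\circ\eta(\tau))$ only when $p>2k+1$. On the subrange $k+1<p\le 2k+1$ (nonempty inside cases (2) and (3) for every $k>0$) the first pass gives nothing, and iterating the inequality of part (1) does not close the argument: in the case $u<g\circ\eta(\tau)$ the estimate reads $u^{k/2}\big(g\circ\eta(\tau)-u\big)\le C\,\eta(d)\,o(d)$, whose solution set contains, besides the branch $u=g\circ\eta(\tau)(1+o(1))$, a small-$u$ branch $u\lesssim d^{-2/(p-k-1)}o\big(d^{2/k}\big)$ which part (1) alone never excludes; no number of iterations manufactures the missing lower bound on $u$.

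The paper avoids this issue entirely by never bootstrapping at the level of $u$: it expands the semilinear estimate \eqref{vsvilup}, $|v-\eta(\tau)|\le \eta(d)\,o(d)$, where the error is automatically negligible because the curvature correction of $\eta(\tau)$ is of size $\eta(d)\cdot O(d)$ (here $J(t)\asymp t^2$ near $0$, obtained from $J'(0^+)=0$ and $J''(0^+)=(N-1)\big(-1+\lim B\Lambda\big)$, the same limit you use for \eqref{terzacurv}), so that $v=\Gamma_0 d^{\frac{k+2}{k+1-p}}(1+o(1))+\Gamma_0\tfrac{(k+2)(N-1)}{p+k+3}\mathcal{H}(\sigma(x))\,d^{\frac{2k+3-p}{k+1-p}}(1+o(1))$; only then does it apply $g$ through Lemma~\ref{l2.3} to get (2) and (3), so no lower bound on $u$ is ever needed. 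To repair your proof, replace the bootstrap by this step back to the $v$-level estimate (available inside the proof of Proposition~\ref{fin1}), or else supply an independent barrier giving $v\ge\eta(d)(1+o(1))$, hence $u\ge\Gamma d^{-2/(p-k-1)}(1+o(1))$. Finally, your identification of the coefficient as $\Gamma'$ is left implicit; it hinges on the exact quadratic coefficient of $J$ at $0$ (Taylor's formula applied to $J''(0^+)$), so that computation must actually be carried out, as the paper does via \eqref{f1}--\eqref{f4}.
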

\begin{proof}
We want to apply Proposition \ref{fin1}.
Let us check that condition~\eqref{primacurv} holds. 
From the validity of the Keller-Osserman condition and the definition of $\psi$, we have
$\psi(\nu t)\to 0$ as $t\to+\infty$, for every $\nu\in (0,1]$. Given $\nu\in (0,1)$, using \eqref{caso:potpot} and \eqref{1.8}, we have in turn that
\begin{align*}
& \lim_{t\to+\infty}\frac{\psi(\nu t)}{\psi(t)}
=\lim_{t\to+\infty}\frac{\int_{\nu t}^{+\infty}\frac{ds}{\sqrt{2F(g(s))}}}{\int_t^{+\infty}\frac{ds}{\sqrt{2F(g(s))}}} 
 = \nu\sqrt{\lim_{t\to+\infty}\frac{F(g(t))}{F(g(\nu t))}} \\
&= \nu\sqrt{\lim_{t\to+\infty}\frac{F(g(t))}{(g(t))^{p+1}}  
\frac{(g(\nu t))^{p+1}}{F(g(\nu t))} \Big[\frac{g(t)}{t^{\frac{2}{k+2}}}\Big]^{p+1}               
\Big[\frac{(\nu t)^{\frac{2}{k+2}}}{g(\nu t)}\Big]^{p+1} \nu^{-\frac{2(p+1)}{k+2}}} \\
&= \nu^{\frac{k+1-p}{k+2}}\sqrt{\lim_{t\to+\infty}\frac{F(g(t))}{(g(t))^{p+1}}  
\lim_{t\to+\infty}\frac{(g(\nu t))^{p+1}}{F(g(\nu t))} \lim_{t\to+\infty}\Big[\frac{g(t)}{t^{\frac{2}{k+2}}}\Big]^{p+1}               
\lim_{t\to+\infty}\Big[\frac{(\nu t)^{\frac{2}{k+2}}}{g(\nu t)}\Big]^{p+1}} \\
&= \frac{1}{\nu^{\frac{p-k-1}{k+2}}}>1, \,\,\quad\text{being $\nu<1$ and $p>k+1$.}
\end{align*}
Let us now check that condition~\eqref{secondacurv} is fulfilled. Observe first that, using \eqref{caso:potpot}, \eqref{1.8} and ~\eqref{limetaa}, we have
\begin{align*}
& \lim_{\delta\to 0^+}\frac{f(g(\eta(\delta(1+o(1)))))}{f(g(\eta(\delta)))} \\
&=
\lim_{\delta\to 0^+}\frac{f(g(\eta(\delta(1+o(1)))))}{(g(\eta(\delta(1+o(1)))))^p}\cdot
\lim_{\delta\to 0^+}\frac{(g(\eta(\delta)))^p}  {f(g(\eta(\delta)))}\cdot \\
& \,\,\,\,\cdot \lim_{\delta\to 0^+}\Big[\frac{g(\eta(\delta(1+o(1))))}{(\eta(\delta(1+o(1))))^{\frac{2}{k+2}}}\Big]^p
\cdot \lim_{\delta\to 0^+}\Big[\frac{(\eta(\delta))^{\frac{2}{k+2}}}{g(\eta(\delta))}\Big]^p
\cdot \lim_{\delta\to 0^+}\Big[\frac{\eta(\delta(1+o(1)))}{\eta(\delta)}\Big]^{\frac{2p}{k+2}}  \\
&= \lim_{\delta\to 0^+}\Big[\frac{\eta(\delta(1+o(1)))}{\eta(\delta)}\Big]^{\frac{2p}{k+2}} \\
& =
\Big[\lim_{\delta\to 0^+}\frac{\eta(\delta(1+o(1)))}{(\delta(1+o(1)))^{\frac{k+2}{k+1-p}}}  
\lim_{\delta\to 0^+}\frac{\delta^{\frac{k+2}{k+1-p}}}{\eta(\delta)}
\lim_{\delta\to 0^+}\frac{(\delta(1+o(1)))^{\frac{k+2}{k+1-p}}}{\delta^{\frac{k+2}{k+1-p}}}
\Big]^{\frac{2p}{k+2}}=1.
\end{align*}
Moreover, we have
\begin{align*}
& \Big(\lim_{\delta\to 0^+}\frac{g'(\eta(\delta(1+o(1))))}{g'(\eta(\delta))}\Big)^2
=\lim_{\delta\to 0^+}\frac{a(g(\eta(\delta)))}{a(g(\eta(\delta(1+o(1)))))} \\
&=\lim_{\delta\to 0^+}\frac{a(g(\eta(\delta)))}{(g(\eta(\delta)))^k}\cdot
\lim_{\delta\to 0^+}\frac{(g(\eta(\delta(1+o(1)))))^k}{a(g(\eta(\delta(1+o(1)))))}\cdot
\lim_{\delta\to 0^+}\Big[\frac{g(\eta(\delta))}{(\eta(\delta))^{\frac{2}{k+2}}}\Big]^k \cdot \\
&\,\, \cdot \lim_{\delta\to 0^+}\Big[\frac{(\eta(\delta(1+o(1))))^{\frac{2}{k+2}} }{g(\eta(\delta(1+o(1))))}\Big]^k\cdot
\lim_{\delta\to 0^+}\Big[\frac{\eta(\delta)}{\delta^{\frac{k+2}{k+1-p}}}\Big]^{\frac{2k}{k+2}}\cdot \\
&\,\, \cdot\lim_{\delta\to 0^+}\Big[\frac{(\delta(1+o(1)))^{\frac{k+2}{k+1-p}}}{\eta(\delta(1+o(1)))}\Big]^{\frac{2k}{k+2}}\cdot
\lim_{\delta\to 0^+}\frac{\delta^{\frac{2k}{k+1-p}}}{(\delta(1+o(1)))^{\frac{2k}{k+1-p}}}=1.
\end{align*}
Arguing in a similar fashion, there holds
$$
\lim_{\delta\to 0^+}\frac{F(\eta(\delta))}{F(\eta(\delta(1+o(1))))}=1.
$$
Therefore, collecting the above conclusions, from the definition of $B$ it follows that
\begin{align*}
 \lim_{\delta\to 0^+}\frac{B(\eta(\delta(1+o(1))))}{B(\eta(\delta))}&=
\lim_{\delta\to 0^+}\frac{f(g(\eta(\delta(1+o(1)))))}{f(g(\eta(\delta)))}\cdot \\
& \,\,\cdot\lim_{\delta\to 0^+}\frac{g'(\eta(\delta(1+o(1))))}{g'(\eta(\delta))}\cdot
\sqrt{\lim_{\delta\to 0^+}\frac{F(\eta(\delta))}{F(\eta(\delta(1+o(1))))}}=1,
\end{align*}
as desired. 
For what concerns the quantity $B(w)\Lambda(w)$ we have, using \eqref{caso:potpot} again,
\begin{align}\label{f2}
\lim_{w\to +\infty}B(w) \Lambda(w)&=\lim_{w\to +\infty}\frac{f(g(w))g'(w)}{\sqrt{2}}\,\frac{\int_0^w \sqrt{2F(g(s))}ds}{(F(g(w)))^{3/2}}\nonumber \\
& =\lim_{w\to +\infty}\frac{ f_{\infty}(g(w))^p}{\sqrt 2 \sqrt {a(g(w))}}\,\frac{\sqrt{(p+1)^3}}{\sqrt{f_{\infty}^3}}
\frac{\int_0^w \sqrt{2F(g(s))}ds} { (g(w))^{\frac{3(p+1)}2}}\nonumber\\
&= \frac {\sqrt{(p+1)^3}}{\sqrt 2\sqrt{a_{\infty}} \sqrt f_{\infty}}\lim_{w\to +\infty}\frac{\int_0^w \sqrt{2F(g(s))}ds} { (g(w))^{\frac{p+3+k}2}}\nonumber\\
&= \frac {\sqrt{(p+1)^3}}{\sqrt 2\sqrt{a_{\infty}} \sqrt f_{\infty}}\,\frac 2{(p+k+3)}\lim_{w\to +\infty}\frac{ \sqrt{2F(g(w))}} { (g(w))^{\frac{p+1+k}2}g'(w)}\nonumber\\
&=\frac {2\sqrt{(p+1)^3}}{\sqrt{a_{\infty}} \sqrt f_{\infty} (p+k+3)}\lim_{w\to +\infty}
\frac {\sqrt f_{\infty}\sqrt{a_{\infty}}} {\sqrt {p+1}}\,\frac{(g(w))^{\frac{p+1}2}} { (g(w))^{\frac{p+1+k}2}(g(w))^{-\frac k2}}\nonumber\\
&=\frac{ 2(p+1)}{p+k+3}
\end{align}
so that condition~\eqref{terzacurv} follows from \eqref{f2}.\\
In turn, from Proposition \ref{fin1}, again on account of Lemma~\ref{l2.3} and by \eqref{limetaa}, up to possibly enlarging $L$ we obtain
\begin{align*}
& |u(x)-g\circ\eta({\rm d}(x,\partial\Omega)-{\mathcal H}(\sigma(x))J({\rm d}(x,\partial\Omega)))| \\ 
&\leq L\frac{({\rm d}(x,\partial\Omega))^{\frac{k+2}{k+1-p}}o({\rm d}(x,\partial\Omega))}{\min\{u^{k/2}(x)),
({\rm d}(x,\partial\Omega)-{\mathcal H}(\sigma(x))J({\rm d}(x,\partial\Omega)))^{k/(k+1-p)}\}} , 
\end{align*}
provided that ${\rm d}(x,\partial\Omega)$ goes to zero. This proves (1) and the first assertion of Theorem~\ref{blowII}.
Let us now come to the proof of assertions (2) and (3). 
First we want to estimate the function $J(t)$ near $0$. To this end, observe 
that the function $ \Lambda(t)$ is defined for $t>0$ and that, from \eqref{caso:potpot} it follows
\begin{align*}
& \lim_{t\to 0^+} \Lambda(\eta(t))=\lim_{w\to +\infty} \Lambda(w)= \lim_{w\to +\infty}\frac{\int_{0}^w\sqrt{2F(g(s))}ds}{F(g(w))}\\
& =\sqrt 2\lim_{w\to +\infty}\frac{\sqrt{F(g(w))}}{f(g(w))g'(w)}=\sqrt 2\lim_{w\to +\infty}\frac{\left( \frac {f_{\infty}}{p+1}\right)^{\frac 12}\left( g(w)\right)^{\frac{p+1}2}\sqrt{a(g(w))}}{f_{\infty}\left( g(w)\right)^p}\\
&= \frac{\sqrt 2}{\sqrt f_{\infty} \sqrt{p+1}}\lim_{w\to +\infty} \sqrt{a_{\infty}}(g(w))^{\frac{p+1}2 -p+\frac k2}=\frac{\sqrt 2\sqrt{a_{\infty}}}{\sqrt f_{\infty} \sqrt{p+1}}\lim_{w\to +\infty}(g(w))^{\frac{k+1-p}2}=0
\end{align*}
since $p>k+1$. This implies that
\begin{align}\label{f1}
& \lim_{t\to 0^+} J'(t)=\lim_{t\to 0^+}\frac{N-1}2 \Lambda(\eta(t))=0.
\end{align}
Moreover
\begin{align*}
\Lambda '(w)&=\frac{\sqrt {2 F(g(w))}F(g(w))-f(g(w))g'(w)\int_{0}^w\sqrt{2F(g(s))}ds}{\left(F(g(w))\right)^2}\\
& =\frac {\sqrt 2}{\sqrt{F(g(w))}}-\frac{f(g(w))g'(w)}{F(g(w))}\frac{\int_{0}^w\sqrt{2F(g(s))}ds}{F(g(w))}\\
&=\frac {\sqrt 2}{\sqrt{F(g(w))}}-\frac{f(g(w))g'(w)}{F(g(w))}  \Lambda(w),
\end{align*}
so that
\begin{align*}
J''(t)&= \frac{N-1}2 \Lambda '(\eta(t))\eta'(t)=-\frac {N-1}2 \Lambda '(\eta(t))\sqrt{2F(g(\eta(t)))}\\
&=-\frac {N-1}{\sqrt 2}\left( \sqrt 2- \frac{f(g(\eta(t)))g'(\eta(t))}{\sqrt{F(g(\eta(t)))}} \Lambda(\eta(t))\right)\\
&=(N-1)\big( -1+B(\eta(t))\Lambda(\eta(t))\big).
\end{align*}
Then  \eqref{f2} implies that
\begin{align}\label{f3}
\lim_{t\to 0^+}J''(t)&=(N-1)\left( -1+\lim_{t\to 0^+}B(\eta(t))  \Lambda(\eta(t))\right)=(N-1)\left( -1+\lim_{w\to +\infty}B(w)
 \Lambda(w)\right)\nonumber\\
&=(N-1)\left( -1+\frac{ 2(p+1)}{(p+k+3)}\right)=\frac{(N-1)}{(p+k+3)}(p-k-1).
\end{align}
From \eqref{f1} and \eqref{f3} we get
\begin{align}\label{f4}
J(t) &= \frac{(N-1)}{p+k+3}(p-k-1)t^2+o(t^2)
\end{align}
for $t>0$ sufficiently small.
From formulas \eqref{limetaa} and \eqref{f4} we have
\begin{align*}
& \eta({\rm d}(x,\partial\Omega)-{\mathcal H}(\sigma(x))J({\rm d}(x,\partial\Omega)))=\\
&=\frac{\eta({\rm d}(x,\partial\Omega)-{\mathcal H}(\sigma(x))J({\rm d}(x,\partial\Omega)))}
{\left({\rm d}(x,\partial\Omega)-{\mathcal H}(\sigma(x))J({\rm d}(x,\partial\Omega))\right)^{\frac{k+2}{k+1-p}}} 
\left({\rm d}(x,\partial\Omega)\right)^{\frac{k+2}{k+1-p}}\Big(1-\frac{{\mathcal H}(\sigma(x))J({\rm d}(x,\partial\Omega))}
{{\rm d}(x,\partial\Omega)}\Big)^{\frac{k+2}{k+1-p}}\\
&= \Gamma_0 (1+o(1))\left({\rm d}(x,\partial\Omega)\right)^{\frac{k+2}{k+1-p}}
\Big( 1+\frac{k+2}{p-k-1}{\mathcal H}(\sigma(x))\frac{J({\rm d}(x,\partial\Omega)}{{\rm d}(x,\partial\Omega)}
+ o\Big( \frac{J({\rm d}(x,\partial\Omega)}{{\rm d}(x,\partial\Omega)}\Big)\Big)\\
&= \Gamma_0({\rm d}(x,\partial\Omega))^{\frac{k+2}{k+1-p}}
\Big( 1+\frac{k+2}{p-k-1}{\mathcal H}(\sigma(x))\frac{(N-1)(p-k-1) }{p+k+3}
\frac{({\rm d}(x,\partial\Omega))^2}{ {\rm   d}(x,\partial\Omega)} (1+o(1))\Big) (1+o(1))
\end{align*}
if ${\rm d}(x,\partial\Omega)$ is sufficiently small. Moreover
\begin{align*}
 \eta({\rm d}(x,\partial\Omega))&o({\rm d}(x,\partial\Omega))=\\
&=\frac{\eta({\rm d}(x,\partial\Omega))}{\left( {\rm d}(x,\partial\Omega)\right)^{\frac{k+2}{k+1-p}}}
\left( {\rm d}(x,\partial\Omega)\right)^{\frac{k+2}{k+1-p}}o({\rm d}(x,\partial\Omega))\\
&=\Gamma_0 (1+o(1))\left( {\rm d}(x,\partial\Omega)\right)^{\frac{k+2}{k+1-p}+1}o(1)\\
&=\Gamma_0\left( {\rm d}(x,\partial\Omega)\right)^{\frac{2k+3-p}{k+1-p}}o(1)
\end{align*}
for ${\rm d}(x,\partial\Omega)$ small enough. Then \eqref{vsvilup} implies that
\begin{align*}
v(x)= \Gamma_0\left({\rm d}(x,\partial\Omega)\right)^{\frac{k+2}{k+1-p}}(1+o(1)) 
+\Gamma_0 \frac{(k+2)(N-1)}{p+k+3}{\mathcal H}(\sigma(x)) \left({\rm d}(x,\partial\Omega)\right)^{\frac{2k+3-p}{k+1-p}}(1+o(1))
\end{align*}
for ${\rm d}(x,\partial\Omega)$ small enough.
Then, in light of Lemma~\ref{colleg} and using \eqref{1.8}, 
any blow-up solution $u\in C^2(\Omega)$ to \eqref{prob} satisfies
\begin{align*}
& u(x)= g(v(x))\\
&= g\Big( \Gamma_0\left({\rm d}(x,\partial\Omega)\right)^{\frac{k+2}{k+1-p}}(1+o(1)) 
+\Gamma_0 \frac{(k+2)(N-1)}{p+k+3}{\mathcal H}(\sigma(x)) \left({\rm d}(x,\partial\Omega)\right)^{\frac{2k+3-p}{k+1-p}}(1+o(1))\Big)\\
&= \frac{g\left( \Gamma_0\left({\rm d}(x,\partial\Omega)\right)^{\frac{k+2}{k+1-p}}(1+o(1)) 
+\Gamma_0 \frac{(k+2)(N-1)}{p+k+3}{\mathcal H}(\sigma(x)) \left({\rm d}(x,\partial\Omega)\right)^{\frac{2k+3-p}{k+1-p}}(1+o(1))\right)}
{\left[  \Gamma_0\left({\rm d}(x,\partial\Omega)\right)^{\frac{k+2}{k+1-p}}(1+o(1)) 
+\Gamma_0 \frac{(k+2)(N-1)}{p+k+3}{\mathcal H}(\sigma(x)) \left({\rm d}(x,\partial\Omega)\right)^{\frac{2k+3-p}{k+1-p}}(1+o(1))\right]^{\frac 2{k+2}}}\cdot \\
& \cdot \Big[  \Gamma_0\left({\rm d}(x,\partial\Omega)\right)^{\frac{k+2}{k+1-p}}(1+o(1)) 
+\Gamma_0 \frac{(k+2)(N-1)}{p+k+3}{\mathcal H}(\sigma(x)) \left({\rm d}(x,\partial\Omega)\right)^{\frac{2k+3-p}{k+1-p}}(1+o(1))\Big]^{\frac 2{k+2}}\\
&=g_{\infty}(1+o(1))\Gamma_0^{\frac 2{k+2}}\left({\rm d}(x,\partial\Omega)\right)^{\frac 2{k+1-p}}
\left( 1+\frac{(k+2)(N-1)}{p+k+3}{\mathcal H}(\sigma(x)) {\rm d}(x,\partial\Omega)(1+o(1))\right)^{\frac 2{k+2}}\\
&=g_{\infty}\Gamma_0^{\frac 2{k+2}}\left({\rm d}(x,\partial\Omega)\right)^{\frac 2{k+1-p}}(1+o(1))
+\frac {2(N-1)}{p+k+3}g_{\infty}\Gamma _0^{\frac 2{k+2}}{\mathcal H}(\sigma(x))\left( {\rm d}(x,\partial\Omega)\right)^{\frac{k+3-p}{k+1-p}}(1+o(1))
\end{align*}
as $x$ approaches the boundary $\partial\Omega$. By performing some simple manipulations this yield the desired conclusions.

\end{proof}

\noindent
{\bf Acknowledgments.} The authors wish to thank Professors Catherine Bandle, Louis Dupaigne, Alberto Farina, Olivier Goubet 
and Vicentiu Radulescu (in alphabetical order) for providing some bibliographic data. 
The authors also thank Marco Caliari
for providing a numerical Octave code in order to check the validity of some asymptotic expansions. 

\bigskip

\end{document}